\newtheorem{thm}{Theorem}[section]
\newtheorem{lmm}[thm]{Lemma}
\newtheorem{cor}[thm]{Corollary}
\newtheorem{defn}[thm]{Definition}
\theoremstyle{definition}
\newcommand{\bigavg}[1]{\biggl\langle #1 \biggr\rangle}
\newcommand{\ee}{\mathbb{E}}
\newcommand{\rr}{\mathbb{R}}
\newcommand{\smallavg}[1]{\langle #1 \rangle}
\newcommand{\var}{\mathrm{Var}}
\newcommand{\ve}{\varepsilon}
\newcommand{\fpar}[2]{\frac{\partial #1}{\partial #2}}
\newcommand{\mpar}[3]{\frac{\partial^2 #1}{\partial #2 \partial #3}}
\numberwithin{equation}{section}
\newcommand{\Z}{\mathbb{Z}}
\newcommand{\R}{\mathbb{R}}
\newcommand{\E}{\mathbb{E}}
\renewcommand{\P}{\mathbb{P}}
\newcommand{\sij}{\smallavg{\sigma_i\sigma_j}}
\newcommand{\si}{\smallavg{\sigma_i}}
\newcommand{\sj}{\smallavg{\sigma_j}}
\begin{document}
\title{Features of a spin glass in the random field Ising model}
\author{Sourav Chatterjee\thanks{Department of Statistics, Stanford University, 390 Jane Stanford Way, Stanford, CA 94305, USA. Email: \href{mailto:souravc@stanford.edu}{\tt souravc@stanford.edu}. 
}}
\affil{Stanford University}


\maketitle


\begin{abstract}
A longstanding open question in the theory of disordered systems is whether short-range models, such as the random field Ising model or the Edwards--Anderson model, can indeed have the famous properties that characterize mean-field spin glasses at nonzero temperature. This article shows that this is at least partially  possible in the case of the random field Ising model. Consider the Ising model on a discrete $d$-dimensional cube under free boundary condition, subjected to a very weak i.i.d.~random external field, where the field strength is inversely proportional to the square-root of the number of sites. It turns out that in $d\ge 2$ and at subcritical temperatures, this model has some of the key features of a mean-field spin glass. Namely, (a) the site overlap  exhibits one step of replica symmetry breaking, (b) the quenched distribution of the overlap is non-self-averaging, and (c) the overlap has the Parisi ultrametric property. Furthermore, it is shown that for Gaussian disorder, replica symmetry does not break if the field strength is taken to be stronger than the one prescribed above, and non-self-averaging fails if it is weaker, showing that the above order of field strength is the only one that allows all three properties to hold. However, the model does not have two other features of mean-field models. Namely, (a) it does not satisfy the Ghirlanda--Guerra identities, and (b) it has only two pure states instead of many. 
\newline
\newline
\noindent {\scriptsize {\it Key words and phrases.} Spin glass, random field Ising model, ultrametricity, replica symmetry breaking.}
\newline
\noindent {\scriptsize {\it 2020 Mathematics Subject Classification.} 82B44, 82D30.}
\end{abstract}

\section{Introduction}
The random field Ising model (RFIM) was introduced as a simple model of a disordered system by \citet{imryma75} in 1975. The model is defined as follows. Take any $d\ge 1$ and $\Lambda \subseteq \Z^d$. Let $E$ denote the set of edges connecting neighboring points in $\Lambda$. Given a field strength $h\in \R$, define the (random) Hamiltonian $H:\{-1,1\}^\Lambda \to \R$ as
\begin{align}\label{original}
H(\sigma) := -\sum_{\{i,j\}\in E} \sigma_i\sigma_j - h \sum_{i\in \Lambda} J_i\sigma_i,
\end{align}
where $J = (J_i)_{i\in \Lambda}$ is a fixed realization of i.i.d.~random variables from some distribution. At inverse temperature $\beta > 0$, the RFIM prescribes a random Gibbs measure on $\{-1,1\}^\Lambda$ with probability mass function proportional to $e^{-\beta H(\sigma)}$. 

A large body of deep mathematics has grown around this model, such as the early works of \citet{imbrie84, imbrie85} on the multiplicity of ground states in the 3D RFIM, the proof of phase transition in $d\ge 3$ by \citet{bricmontkupiainen87, bricmontkupiainen88}, the absence of phase transition in $d\le 2$ proved by \citet{aizenmanwehr89, aizenmanwehr90}, and the more recent works on  quantifying the Aizenman--Wehr theorem~\cite{chatterjee18, camiaetal18, aizenmanpeled19, aizenmanetal20}, culminating in the proof of exponential decay of correlations in the 2D RFIM by \citet{dingxia21}. The recent developments have led to a resurgence of interest in this model in the mathematical community, yielding a number of new and important results~\cite{dingzhuang21, dingwirth23, darioetal21, bowditchsun22, dingetal22, barnir22}.

In spite of all this progress, one major question that has not yet been settled is whether the RFIM has a spin glass phase. A disordered system is said to exhibit spin glass behavior if it has the properties that characterize mean-field spin glasses. In the formulation laid out by Giorgio Parisi~\cite{parisi07}, the main features of mean-field spin glasses are replica symmetry breaking (RSB), non-self-averaging (NSA), ultrametricity, and the presence of many pure states. These properties are defined as follows. Consider a system of $N$ particles, with spins $\sigma = (\sigma_1,\ldots, \sigma_N)\in \{-1,1\}^N$. In a disordered system, the probability law $\mu$ of $\sigma$ is random. Let $\sigma^1,\sigma^2,\ldots$ be i.i.d.~spin configurations drawn from a fixed realization of the random probability measure $\mu$. The overlap between the configurations $\sigma^i$ and $\sigma^j$ is defined as
\[
R_{i,j} := \frac{1}{N}\sum_{k=1}^N \sigma^i_k\sigma^j_k. 
\]
Let $\smallavg{R_{1,2}}$ denote the expected value of $R_{1,2}$ with respect to $\mu$. Roughly speaking, we say that the system exhibits replica symmetry if $R_{1,2} \approx \smallavg{R_{1,2}}$ with high probability (i.e., probability $\to 1$ as the system size $\to \infty$). Otherwise, we say that replica symmetry breaks. The breaking of replica symmetry is usually quite difficult to prove rigorously. RSB has been established rigorously only in mean-field systems, where every particle interacts with every other particle. The primary example of this is the Sherrington--Kirkpatrick (SK) model~\cite{sherringtonkirkpatrick75}, where the discovery of RSB led to the development of Parisi's broken replica method~\cite{mezardetal87}.  Rigorous proofs of RSB in the SK and other mean-field models came much later (see \cite{talagrand10, talagrand11} and references therein). 

For short-range models such as the RFIM and the Edwards--Anderson (EA) model~\cite{edwardsanderson75}, there is no proof of RSB as of now. Settling a longstanding debate~\cite{krzakalaetal10}, it was shown in \cite[Lemma 2.6]{chatterjee15} that replica symmetry does not break in the RFIM at any fixed temperature and nonzero field strength. The question of RSB in the EA model is still open, although some aspects of spin glass behavior have been established at zero temperature in the recent preprint~\cite{chatterjee23}, confirming some old conjectures from physics~\cite{fisherhuse86, braymoore87}.

The second basic property of spin glass models in Parisi's formulation is non-self-averaging (NSA). NSA is the property that the quenched law of $R_{1,2}$ (i.e., its law conditional on a realization of $\mu$) does not converge to a deterministic limit in probability as the system size goes to infinity. Rigorous proofs of NSA are now known for mean-field systems~\cite{talagrand10, talagrand11}, but there is no short-range model that has been rigorously proved to have the NSA property. In fact, there are mathematical arguments based on ergodic theory that seem to rule out NSA in translation-invariant short-range models in infinite volume~\cite{newmanstein96}, but there is a counter-argument that infinite volume systems do not truly represent finite volume behavior~\cite{parisi96}. The main result of \cite{chatterjee15} implies that the RFIM does not have the NSA property at non-critical field strengths, but that leaves open the possibility that NSA may hold at critical temperatures in the RFIM. Nothing is known about NSA in the EA model.

The third basic property of spin glasses is ultrametricity. This means, roughly speaking, that for any given $\ve > 0$, the probability of the event $R_{1,3}\ge \min\{R_{1,2}, R_{2,3}\} - \ve$ tends to $1$ as the system size goes to infinity. Ultrametricity implies that the Gibbs measure ``organizes the states like a tree'', a notion that has recently been made mathematically precise in \cite{chatterjeesloman21}. Ultrametricity also has the important consequence that it allows one to write down the joint distribution of arbitrarily many overlaps from the distribution of a single overlap, and thereby understand almost everything about the system. Ultrametricity has been rigorously proved in mean-field systems, most notably by \citet{panchenko13a} for a variant of the SK model, followed by extensions to other mean-field systems~\cite{auffingerchen16, jagannath17, contuccietal13, subag17, subag18}. As of today, there are no rigorous results about ultrametricity for systems with purely local interactions.

The fourth property --- the existence of many pure states --- means, very roughly, that the Gibbs measure behaves like a mixture of a large number of ergodic measures. Again, this is known rigorously only for mean-field models~\cite{panchenko13} and certain special models on lattices~\cite{newmanstein96a}. Incidentally, it is rather unclear how to define a pure state outside the setting of Markov random fields on finite-dimensional lattices~\cite{georgii11}. For certain kinds of mean-field spin glasses, a rigorous definition was given by \citet{panchenko13}. In the next section, we will give a general definition of the number of pure states that encompasses both mean-field and lattice models.


Proving that short-range models of disordered systems can have the above features of mean-field spin glasses has long been one of the main unsolved questions in this area, first posed in the seminal monograph of \citet*{mezardetal87}. As mentioned above, there is a negative result from \cite{chatterjee15}, where it was established that the RFIM does not have a phase where replica symmetry breaks. In this article, we show that in spite of this negative result, the first three features of a spin glass listed above --- RSB, NSA, and ultrametricity ---  can in fact arise in the RFIM, if instead of keeping the field strength $h$ in \eqref{original} fixed, we take it to zero like $|\Lambda|^{-\frac{1}{2}}$ as $|\Lambda|\to \infty$, and take $\beta$ bigger than the critical inverse temperature of the Ising model. Moreover, if the $J_i$'s are Gaussian, then we show that this is the only scaling of $h$ where this happens. However, the fourth property does not hold, because the system appears to be a mixture of two pure states instead of many. Another common (but perhaps not essential) feature of mean-field spin glasses, called the Ghirlanda--Guerra identities, also does not hold for this system.  

\section{Results}\label{resultsec}
Take any $d\ge 2$. For each $n$, let $B_n := \{-n, \ldots, n\}^d$, and let $E_n$ be the set of undirected nearest neighbor edges of $B_n$. Let $\Sigma_n := \{-1,1\}^{B_n}$ be the set of $\pm1$-valued spin configurations on $B_n$. Let $(J_i)_{i\in B_n}$ be a collection of i.i.d.~random variables with mean zero, variance one, and finite moment generating function in an open neighborhood of the origin. Let $h\in \R$ be a parameter. Define the Hamiltonian $H_n: \Sigma_n \to \rr$ as 
\begin{align}\label{hamil}
H_n(\sigma) := -\sum_{\{i,j\}\in E_n} \sigma_i \sigma_j  - \frac{h}{\sqrt{|B_n|}} \sum_{i\in B_n} J_i \sigma_i
\end{align}
This is the Hamiltonian for the Ising model on $B_n$ subjected to a random external field of strength $h J_i |B_n|^{-\frac{1}{2}}$ at site $i$ for each $i\in B_n$. That is, we have replaced the parameter $h$ in \eqref{original} by $h|B_n|^{-\frac{1}{2}}$. The Gibbs measure for this model at inverse temperature $\beta$ is the random probability measure on $\Sigma_n$ with probability mass function proportional to $e^{-\beta H_n(\sigma)}$ at each $\sigma \in \Sigma_n$. For a function $f:\Sigma_n\to \R$, let $\smallavg{f}$ denote its expected value with respect to the Gibbs measure. The ``quenched distribution'' of $f$ is the law of $f(\sigma)$ conditional on $(J_i)_{i\in B_n}$, where $\sigma$ is drawn from the Gibbs measure. 

\subsection{Replica symmetry breaking and non-self-averaging}
Let $\sigma^1$ and $\sigma^2$ be drawn independently from the Gibbs measure defined by a single realization of the disorder $(J_i)_{i\in B_n}$. Recall from the previous section that the site overlap (or spin overlap) between $\sigma^1$ and $\sigma^2$ is defined as
\[
R_{1,2} := \frac{1}{|B_n|} \sum_{i\in B_n} \sigma_i^1 \sigma_i^2. 
\]
If we have a sequence of configurations $\sigma^1,\sigma^2,\ldots$ drawn independently from the Gibbs measure, then $R_{i,j}$ denotes the overlap between $\sigma^i$ and $\sigma^j$. The following theorem is the first main result of this paper.
\begin{thm}[Replica symmetry breaking and non-self-averaging]\label{rsbthm}
Take any $d\ge 2$ and $n\ge 1$ and consider the model defined above on $B_n = \{-n,\ldots,n\}^d$ at inverse temperature $\beta>\beta_0$, where $\beta_0$ is the critical inverse temperature for the ordinary Ising model on $\Z^d$. Then there is a deterministic value $q>0$ depending only on $\beta$ and $d$, such that $\E\smallavg{(R_{1,2}^2 - q^2)^2} \to 0$ as $n\to \infty$. Moreover, if we define
\begin{align}\label{xndef}
X_n := \frac{\sqrt{q} \beta h}{\sqrt{|B_n|}} \sum_{i\in B_n} J_i,
\end{align}
then we have that 
\begin{align}\label{r12}
\lim_{n\to\infty} \E[(\smallavg{R_{1,2}} - q \tanh^2 X_n)^2] = 0.
\end{align}
Consequently, as $n\to\infty$, $\smallavg{R_{1,2}}$ converges in law to $q\tanh^2(\sqrt{q} \beta h Z)$, where $Z$ is a standard Gaussian random variable. 
\end{thm}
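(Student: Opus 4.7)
The guiding picture is that for $\beta>\beta_0$ the free-boundary Ising measure on $B_n$ decomposes, up to vanishing corrections, as a mixture $w_+\mu_n^+ + w_-\mu_n^-$ of the two pure phases with per-site magnetization $\pm m(\beta)$, with the weak random field serving only as a selector of the mixture weights rather than as a perturbation of the phases themselves. The candidate is $q=m(\beta)^2$, and the mixture weights should satisfy $w_+-w_-\approx\tanh X_n$. Once this picture is in hand, everything else reduces to elementary computations with the $w_\pm$ together with clustering of $k$-point functions in the pure phases.

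I define $\mu_n^\pm$ as the Gibbs measure conditioned on $\pm M_n(\sigma)>0$, where $M_n(\sigma):=|B_n|^{-1}\sum_i\sigma_i$, with restricted partition functions $Z_n^\pm(h)$. For $\beta>\beta_0$ in $d\ge 2$, deep results for the Ising model (such as the Aizenman--Duminil-Copin--Sidoravicius theorem together with classical correlation inequalities) give $\langle\sigma_i\rangle_{\pm,0}=\pm m+o(1)$ for bulk $i$ and exponential decay of truncated $k$-point functions under $\mu_n^{\pm,0}$. By $\mathbb{Z}_2$ symmetry $Z_n^+(0)=Z_n^-(0)$, so
\begin{equation*}
\log\frac{Z_n^+(h)}{Z_n^-(h)}=\log\frac{\langle e^{\beta h T_n}\rangle_{+,0}}{\langle e^{\beta h T_n}\rangle_{-,0}}=\sum_{k\ge 1}\frac{(\beta h)^k}{k!}\bigl(\kappa_k^+-\kappa_k^-\bigr),
\end{equation*}
where $T_n:=|B_n|^{-1/2}\sum_i J_i\sigma_i$ and $\kappa_k^\pm$ is its $k$-th cumulant under $\mu_n^{\pm,0}$. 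The symmetry $\sigma\to-\sigma$ gives $\kappa_k^-=(-1)^k\kappa_k^+$, so even cumulants cancel, while the $k=1$ contribution is $\beta h(2m S_n)=2X_n$ with $S_n:=|B_n|^{-1/2}\sum_i J_i$. For odd $k\ge 3$, each $\kappa_k^\pm$ is a $J$-weighted sum of truncated correlators; exponential clustering gives $\sum|U_k^\pm|=O(|B_n|)$, and a second-moment estimate in $J$ yields $\kappa_k^\pm=O_P(|B_n|^{(2-k)/2})$. Therefore $\log(Z_n^+/Z_n^-)=2X_n+o_P(1)$ and $w_+-w_-=\tanh X_n+o_P(1)$.

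By independence of the two replicas, $\langle R_{1,2}^\ell\rangle=|B_n|^{-\ell}\sum_{i_1,\ldots,i_\ell}\langle\sigma_{i_1}\cdots\sigma_{i_\ell}\rangle^2$. For $\ell=1$, $\langle\sigma_i\rangle=w_+\langle\sigma_i\rangle_{+}+w_-\langle\sigma_i\rangle_{-}\approx m(w_+-w_-)\approx m\tanh X_n$ for bulk $i$ (the linear-response correction induced by the field on $\langle\sigma_i\rangle_\pm$ is $O(h/\sqrt{|B_n|})$ per site and vanishes in the $|B_n|^{-1}$ average), giving $\langle R_{1,2}\rangle\approx m^2\tanh^2 X_n=q\tanh^2 X_n$ in $L^2$, which is \eqref{r12}. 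For $\ell=2,4$, clustering in each pure phase gives $\langle\sigma_i\sigma_j\rangle_\pm\to m^2$ and $\langle\sigma_i\sigma_j\sigma_k\sigma_l\rangle_\pm\to m^4$ for well-separated indices; crucially the limits coincide in $+$ and $-$, so the mixture weights drop out and one obtains $\mathbb{E}\langle R_{1,2}^2\rangle\to q^2$ and $\mathbb{E}\langle R_{1,2}^4\rangle\to q^4$, whence
\begin{equation*}
\mathbb{E}\langle(R_{1,2}^2-q^2)^2\rangle=\mathbb{E}\langle R_{1,2}^4\rangle-2q^2\mathbb{E}\langle R_{1,2}^2\rangle+q^4\to 0.
\end{equation*}
The distributional convergence of $\langle R_{1,2}\rangle$ is then the continuous mapping theorem applied to the classical CLT $S_n\to Z\sim N(0,1)$.

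The main obstacle will be the first step: rigorously justifying the pure-state decomposition, and in particular the exponential clustering of truncated correlators in the pure phases, uniformly for all $\beta>\beta_0$ in $d\ge 2$. This is much stronger than mere non-vanishing of spontaneous magnetization. The delicate quantitative balance is that the per-site field $O(h/\sqrt{|B_n|})$ is far too small to destabilize either pure phase, yet it produces an $O(1)$ shift in $\log(Z_n^+/Z_n^-)$ --- which is exactly what drives the non-self-averaging seen in the theorem.
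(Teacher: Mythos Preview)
Your guiding picture---that the weak random field acts only as a selector between two pure phases while leaving the phases themselves unperturbed---is exactly the intuition behind the paper, and your identification $q=m(\beta)^2$ and $w_+-w_-\approx\tanh X_n$ are correct. However, the technical route you take diverges substantially from the paper's, and the obstacle you yourself flag is a genuine gap rather than a routine detail.

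The crux is your reliance on exponential clustering of truncated $k$-point functions under the \emph{conditioned} measures $\mu_n^{\pm,0}$ (free boundary, conditioned on the sign of $M_n$). This is not what Aizenman--Duminil-Copin--Sidoravicius gives you, and it is not a standard result; the closest available input is the Duminil-Copin--Goswami--Raoufi result on Pisztora renormalization for the random-cluster model, but extracting uniform-in-$k$ cumulant bounds under sign-conditioning from it is itself a substantial project. Without that, your cumulant expansion $\sum_{k\ge1}\frac{(\beta h)^k}{k!}\kappa_k^\pm$ is only formal: you need not just each odd $\kappa_k$ to be small, but the whole tail $k\ge3$ to be summable and $o_P(1)$, which demands control of Ursell functions of all orders.

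The paper sidesteps both the sign-conditioning and the cumulant expansion entirely. It works directly with the unconditioned free-boundary Ising measure $\langle\cdot\rangle_0$ and proves, via the FK representation and Pisztora renormalization, that $\langle\sigma_i\sigma_j\rangle_0\to q$ and $\langle\sigma_i\sigma_j\sigma_k\sigma_l\rangle_0\to q^2$ for well-separated bulk points (Theorem~\ref{diffthmmain}). From this alone one gets $\langle(m^2-q)^2\rangle_0\to0$ and $\langle(R_{1,2}^2-q^2)^2\rangle_0\to0$. The random field is then handled by the tilting identity $\langle f\rangle=\langle fe^L\rangle_0/\langle e^L\rangle_0$ with $L=\beta h|B_n|^{-1/2}\sum_iJ_i\sigma_i$, and the key quantities $\langle e^L\rangle_0$, $\langle me^L\rangle_0$ are computed in $L^2$ by taking expectation over $J$ \emph{first} (via a moment-generating-function CLT, Lemma~\ref{gaussianlmm}) and then using only the concentration of $m$ and $R_{1,2}$ under $\langle\cdot\rangle_0$. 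No pure-state decomposition, no conditioned measures, no clustering of truncated correlators beyond order two and four, and no infinite series appear. In effect the paper replaces your cumulant expansion of $\log(Z_n^+/Z_n^-)$ by the single observation that $\langle e^L\rangle_0\approx e^{\frac12\beta^2h^2(1-q)}\cosh X_n$ and $\langle me^L\rangle_0\approx\sqrt{q}\,e^{\frac12\beta^2h^2(1-q)}\sinh X_n$ in $L^2$, which already encodes the phase selection.
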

For the reader's convenience, let us briefly explain the significances of the two assertions of the above theorem. The first assertion, that $\E\smallavg{(R_{1,2}^2 - q^2)^2} \to 0$ as $n\to\infty$, shows that  when $n$ is large, the overlap $R_{1,2}$ is close to either $q$ or $-q$ with high probability. The second assertion shows that the quenched expectation of $R_{1,2}$ is a random variable that converges to a non-degenerate limiting distribution as $n\to\infty$. Jointly, this proves two things. First, it shows that $R_{1,2}$ does indeed behave like a random variable that is close to one of two values, and not just one value (because otherwise, $\smallavg{R_{1,2}}$ would be close to $q$ or $-q$). This is known as one step of replica symmetry breaking (1RSB). Second, it shows that the quenched distribution of the overlap is not self-averaging --- that is, it does not converge to a deterministic limiting distribution as $n\to \infty$. Equation \eqref{r12} shows that the mass near $q$ is approximately
\begin{align}\label{tanhform}
\frac{1}{2}(1+ \tanh^2 X_n),
\end{align}
and the mass near $-q$ is $1$ minus the above. An important thing to note is that $q$ depends only on $\beta$ and $d$, and not on $h$. Thus, $q$ is the limiting absolute value of the overlap in the ordinary Ising model --- that is, the case $h=0$. In particular, Theorem \ref{rsbthm} implies that for the Ising model, the quenched law of $R_{1,2}$ converges in probability to the uniform distribution on $\{-q,q\}$ as $n\to \infty$. The presence of $h$ only changes the masses near $q$ and $-q$.

Theorem \ref{rsbthm} shows that non-self-averaging can occur even in a system that only has local interactions. It is to be noted that the system under consideration here has no obvious representative in the infinite volume limit (because the field strength is tending to zero but with a non-trivial effect which cannot be captured by a model in infinite volume in any obvious way), thereby posing no contradiction to the results of \citet{newmanstein96} on the impossibility of NSA in translation-invariant infinite volume systems.

\subsection{Ultrametricity}
The next result says that the overlap satisfies the Parisi ultrametric property in the large $n$ limit, meaning that $R_{1,3}\ge \min\{R_{1,2},R_{2,3}\} - o(1)$ with probability $1-o(1)$ as $n\to \infty$.
\begin{thm}[Ultrametricity]\label{ultrathm}
Let $d$, $n$, $\beta_0$, $\beta$ and $q$ be as in Theorem \ref{rsbthm}. Then, as $n\to\infty$, the quenched distribution of $(R_{1,2}, R_{1,3}, R_{2,3})$ converges in law to a random limiting distribution with support 
\begin{align}\label{qqq}
\{(q,q,q), (-q,-q,q), (-q,q,-q), (q,-q,-q)\}.
\end{align}
Consequently, for any $\ve >0$, the quenched probability of the event $R_{1,3} \ge \min\{R_{1,2},R_{2,3}\}-\ve$ tends to $1$ in probability as $n\to\infty$.
\end{thm}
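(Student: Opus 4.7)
The plan follows a two-state-mixture heuristic: at subcritical temperature with free boundary conditions, the Ising Gibbs measure should be close to a mixture $p_n\mu^+ + (1-p_n)\mu^-$ of the two pure Ising states, with the random weight $p_n$ determined by the random field via $(2p_n-1)^2 \approx \tanh^2 X_n$ (as implicit in Theorem~\ref{rsbthm}). Under this picture, three independent replicas $\sigma^1,\sigma^2,\sigma^3$ inherit pure-state labels $s_1,s_2,s_3 \in \{+,-\}$, and $R_{ij}$ is close to $+q$ when $s_i = s_j$ and close to $-q$ otherwise. Since this pattern depends only on which of the three labels agree, there are four possible overlap vectors, and they are precisely the four in \eqref{qqq}.

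To turn this into a proof, I will first invoke Theorem~\ref{rsbthm} to note that each $R_{ij}^2$ concentrates on $q^2$, so a priori $(R_{12},R_{13},R_{23})$ concentrates on $\{-q,q\}^3$. To exclude the four sign patterns with $R_{12}R_{13}R_{23} = -q^3$, I will show $\E\smallavg{R_{12}R_{13}R_{23}}\to q^3$. By independence of replicas,
\[
\smallavg{R_{12}R_{13}R_{23}} = \frac{1}{|B_n|^3}\sum_{i,j,k} c_{ij} c_{ik} c_{jk}, \qquad c_{ij} := \smallavg{\sigma_i\sigma_j},
\]
and the central technical step is the identity
\[
\frac{1}{|B_n|^2}\sum_{i,j}(c_{ij}-q)^2 = \smallavg{R_{12}^2} - 2q\,\smallavg{M^2} + q^2,
\]
where $M := |B_n|^{-1}\sum_i\sigma_i$. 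Theorem~\ref{rsbthm} supplies $\E\smallavg{R_{12}^2}\to q^2$, so I only need $\E\smallavg{M^2}\to q$, i.e.\ that the magnetization density in the subcritical Ising model with free boundary and an $O(|B_n|^{-1/2})$ random field concentrates on $\pm\sqrt{q}$.

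Given $\E|B_n|^{-2}\sum(c_{ij}-q)^2\to 0$, I will write $c_{ij} = q + \delta_{ij}$ and expand $c_{ij}c_{ik}c_{jk}$; every error term carries at least one $\delta$-factor and is controlled by Cauchy--Schwarz using the a priori bound $|\delta_{ij}|\le 2$, yielding $\E\smallavg{R_{12}R_{13}R_{23}}\to q^3$. A parallel computation shows $\E\smallavg{(R_{12}R_{13}R_{23})^2}\to q^6$, so $R_{12}R_{13}R_{23}\to q^3$ in $L^1(\P\otimes\smallavg{\cdot})$. Combined with $R_{ij}^2\to q^2$, this forces $(R_{12},R_{13},R_{23})$ into arbitrarily small neighborhoods of the four sign patterns in \eqref{qqq}. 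The ultrametric inequality $R_{13}\ge \min\{R_{12},R_{23}\}-\ve$ then holds at each of the four support points by inspection, giving the last assertion of the theorem.

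For the convergence in law of the quenched measure to a random limit, replica symmetry forces the quenched weights on $(q,-q,-q)$, $(-q,q,-q)$, $(-q,-q,q)$ to agree, and the marginal $\smallavg{R_{12}}\to q\tanh^2 X_n$ from Theorem~\ref{rsbthm} identifies the common value as $\tfrac14(1-\tanh^2 X_n)+o(1)$, leaving mass $\tfrac14(1+3\tanh^2 X_n)+o(1)$ on $(q,q,q)$; since $X_n$ converges in law to a centered Gaussian, the quenched measure converges in law to the corresponding random measure supported on \eqref{qqq}. The hard part will be establishing $\E\smallavg{M^2}\to q$: this is not an explicit output of Theorem~\ref{rsbthm}, and while it is morally a consequence of the subcritical $\pm$-state structure of the Ising model, the argument will have to accommodate the free boundary condition and the vanishing random field perturbation.
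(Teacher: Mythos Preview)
Your approach is correct and would close, but it takes a longer route than the paper and you have overlooked that the ``hard part'' is already on the table.

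First, the step you flag as unproven --- $\E\smallavg{M^2}\to q$ --- is precisely the first assertion of Theorem~\ref{magthm}, which gives $\E\smallavg{(m^2-q)^2}\to 0$. So there is nothing to do there; just cite it.

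Second, the paper's proof skips your two-point-correlation machinery altogether by invoking the \emph{last} assertion of Theorem~\ref{magthm}, namely $\E\smallavg{(R_{i,j}-m(\sigma^i)m(\sigma^j))^2}\to 0$. Once you know each $R_{i,j}\approx m(\sigma^i)m(\sigma^j)$ and each $m(\sigma^i)$ concentrates on $\{-\sqrt q,\sqrt q\}$, set $f(x,y,z):=(xy,xz,yz)$; then $(R_{1,2},R_{1,3},R_{2,3})\approx f(m(\sigma^1),m(\sigma^2),m(\sigma^3))$, and $f(\{-\sqrt q,\sqrt q\}^3)$ is exactly the set \eqref{qqq}. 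The quenched law of $(m(\sigma^1),m(\sigma^2),m(\sigma^3))$ converges in law (by \eqref{mform}), so the image law does too. This replaces your identity $\smallavg{R_{12}R_{13}R_{23}}=|B_n|^{-3}\sum c_{ij}c_{ik}c_{jk}$, the $\delta_{ij}$-expansion, and the exchangeability argument for the weights, all with a continuous-mapping observation. Your route and the paper's route ultimately both rest on Theorem~\ref{magthm}; the paper just uses the stronger clause.

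One minor correction: your ``parallel computation'' for $\E\smallavg{(R_{12}R_{13}R_{23})^2}\to q^6$ is not parallel to the first moment --- expanding that square produces products of four-point correlations $\smallavg{\sigma_i\sigma_{i'}\sigma_j\sigma_{j'}}$, not two-point ones. But the step is unnecessary: since $\E\smallavg{(R_{i,j}^2-q^2)^2}\to 0$ from Theorem~\ref{rsbthm} and the $R_{i,j}$ are uniformly bounded, $(R_{12}R_{13}R_{23})^2=R_{12}^2R_{13}^2R_{23}^2\to q^6$ in $L^1$ directly, and combining with your first-moment computation gives $\E\smallavg{(R_{12}R_{13}R_{23}-q^3)^2}\to 0$.
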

Combined with Theorem \ref{rsbthm}, it is easy to deduce the approximate masses assigned by the law of $(R_{1,2}, R_{1,3}, R_{2,3})$ near the four points displayed in \eqref{qqq}. Let $a$ be the approximate mass near $(q,q,q)$, and let $b$ be the approximate mass near each of the other three points (which must be  equal, by symmetry). Then $a+3b \approx 1$, and $a+b \approx$ the probability of the event  $R_{1,2}\approx q$, which is given by the formula \eqref{tanhform}. Solving, we get
\[
a \approx \frac{1}{4}(1+ 3\tanh^2 X_n), \ \ \ b \approx \frac{1}{4}(1- \tanh^2 X_n).
\]
Just like Theorem \ref{rsbthm}, Theorem \ref{ultrathm} is valid even if $h=0$, that is, for the Ising model. It shows that at subcritical temperatures, the overlap in the Ising model has the ultrametricity property. 

\subsection{Behavior of the magnetization}
The magnetization of a configuration $\sigma$ is defined as
\[
m = m(\sigma) := \frac{1}{|B_n|}\sum_{i\in B_n } \sigma_i. 
\]
The following theorem identifies the limiting behavior of the magnetization of a configuration drawn from the Gibbs measure when $\beta$ is bigger than the critical inverse temperature of the Ising model. It also gives a relation between the magnetizations of two independently drawn configurations and their overlap.
\begin{thm}[Behavior of the magnetization]\label{magthm}
Let $d$, $n$, $\beta_0$, $\beta$ and $q$ be as in Theorem \ref{rsbthm}. The magnetization $m$ of a configuration $\sigma$ drawn from the model satisfies $\ee\smallavg{(m^2 - q)^2} \to 0$ as $n\to \infty$, and with $X_n$ defined as in \eqref{xndef}, we have
\begin{align}\label{mform}
\lim_{n\to\infty} \E[(\smallavg{m} - \sqrt{q} \tanh X_n )^2] = 0.
\end{align}
In particular, $\smallavg{m}$ converges in law to $\sqrt{q}\tanh(\sqrt{q} \beta h Z)$, where $Z$ is a standard Gaussian random variable. Moreover, for most values of $j\in B_n$, $\smallavg{\sigma_j}\approx \smallavg{m}$ with high probability, in the sense that
\begin{align}\label{mformgen}
\lim_{n\to\infty} \frac{1}{|B_n|} \sum_{j\in B_n} \E[(\smallavg{\sigma_j} - \smallavg{m})^2] =0. 
\end{align}
Lastly, if $m(\sigma^1)$ and $m(\sigma^2)$ are the magnetizations in two configurations $\sigma^1$ and $\sigma^2$ chosen independently from the same Gibbs measure, then $\E\smallavg{(R_{1,2} - m(\sigma^1)m(\sigma^2))^2} \to 0$ as $n\to\infty$.
\end{thm}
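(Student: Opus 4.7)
The plan is to prove the five assertions of the theorem in the order (5), (1), (4), (2), (3). The guiding heuristic, which the proofs rigorously extract, is that at $\beta > \beta_0$ with field of strength $O(|B_n|^{-1/2})$, the Gibbs measure is asymptotically a disorder-dependent mixture $\alpha\mu^+ + (1-\alpha)\mu^-$ of the two Ising pure phases of magnetization $\pm\sqrt{q}$, with $2\alpha - 1 \approx \tanh X_n$; under this picture each assertion is immediate, and the proofs exhibit it from the overlap information already established in Theorems~\ref{rsbthm} and~\ref{ultrathm}.

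For Part~(5), I start from the identity
\[
R_{1,2} - m(\sigma^1)m(\sigma^2) = \frac{1}{|B_n|}\sum_{i\in B_n}(\sigma_i^1 - m_1)(\sigma_i^2 - m_2),
\]
which by replica independence under $\smallavg{\cdot}$ gives $\smallavg{(R_{1,2} - m_1 m_2)^2} = \frac{1}{|B_n|^2}\sum_{i,j}\smallavg{(\sigma_i - m)(\sigma_j - m)}^2$. Equivalently, expanding the square,
\[
\E\smallavg{(R_{1,2} - m_1 m_2)^2} = \E\smallavg{R_{1,2}^2} - 2\E\smallavg{R_{1,2}\,m_1 m_2} + \E\smallavg{m^2}^2,
\]
and the goal is to show each of the three terms tends to $q^2$. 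The first is Theorem~\ref{rsbthm}. A useful observation is
\[
\smallavg{R_{1,2}\,m_1 m_2} = \frac{1}{|B_n|}\sum_i \smallavg{\sigma_i m}^2, \qquad \smallavg{m^2}^2 = \Bigl(\frac{1}{|B_n|^2}\sum_{i,j}\smallavg{\sigma_i\sigma_j}\Bigr)^2,
\]
both of which are bounded above by $\smallavg{R_{1,2}^2} = \frac{1}{|B_n|^2}\sum_{i,j}\smallavg{\sigma_i\sigma_j}^2$ via Cauchy-Schwarz; matching lower bounds come from Theorem~\ref{ultrathm}, by computing limiting values of three-replica observables such as $\smallavg{R_{1,2}R_{1,3}}$ from the atomic masses on the ultrametric tree and relating them to the magnetization sums.

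Once Part~(5) is established, the other parts follow by largely algebraic manipulations. Since $|R_{1,2}|, |m_1 m_2|\le 1$, Part~(5) upgrades to $\E\smallavg{(R_{1,2}^2 - m_1^2 m_2^2)^2}\to 0$, and combining with Theorem~\ref{rsbthm} gives $\E\smallavg{(m_1^2 m_2^2 - q^2)^2}\to 0$. Using $\smallavg{m_1^k m_2^k} = \smallavg{m^k}^2$, the left side rewrites as
\[
\E\bigl[(\smallavg{m^4} - q^2)^2 + 2q^2(\smallavg{m^4} - \smallavg{m^2}^2)\bigr] \to 0,
\]
whose two nonnegative summands (the second by Gibbs-Jensen $\smallavg{m^4}\ge\smallavg{m^2}^2$) must each vanish, yielding $\smallavg{m^2}\to q$ in $L^2(\P)$ and hence Part~(1). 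Part~(4) is immediate from the identity $\frac{1}{|B_n|}\sum_j(\smallavg{\sigma_j}-\smallavg{m})^2 = \smallavg{R_{1,2}} - \smallavg{m_1 m_2} = \smallavg{R_{1,2} - m_1 m_2}$, plus Cauchy-Schwarz applied to Part~(5). For Part~(2), Part~(5) combined with Theorem~\ref{rsbthm} gives $\smallavg{m}^2 = \smallavg{m_1 m_2}\approx\smallavg{R_{1,2}}\approx q\tanh^2 X_n$ in $L^2(\P)$, so $|\smallavg{m}|\approx\sqrt{q}|\tanh X_n|$; the sign is pinned down by the FKG monotonicity of $\smallavg{m}$ in each $J_i$ (since $\partial_{J_j}\smallavg{\sigma_i} = \frac{\beta h}{\sqrt{|B_n|}}(\smallavg{\sigma_i\sigma_j} - \smallavg{\sigma_i}\smallavg{\sigma_j})\ge 0$ by FKG) together with the odd symmetry under $J\to -J$. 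Part~(3) is immediate from Part~(2) and the CLT for $X_n$.

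The main obstacle is the cross-term estimate $\E\smallavg{R_{1,2}\,m_1 m_2}\to q^2$ in Part~(5). Because $m_1, m_2$ are sums of spins rather than pairwise products, this goes beyond the purely two-replica content of Theorem~\ref{rsbthm} and demands an effective identification of $m_i$ with $\sqrt{q}$ times the pure-state label of $\sigma^i$ in an averaged sense — which is morally the heart of the theorem. A secondary delicate point is the sign determination in Part~(2): both $\pm\sqrt{q}\tanh X_n$ are consistent with $\smallavg{m}^2\approx q\tanh^2 X_n$, and singling out the correct sign requires the FKG monotonicity combined with a comparison to a pure ferromagnetic regime.
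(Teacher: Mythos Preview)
Your approach is circular. In the paper, Theorems~\ref{rsbthm} and~\ref{ultrathm} are \emph{deduced from} Theorem~\ref{magthm}, not the other way around: the text immediately after the statement of Theorem~\ref{magthm} says ``This theorem is the basis for proving the previously stated results about the overlap,'' and the proofs of Theorems~\ref{rsbthm} and~\ref{ultrathm} in Sections~4.2--4.3 simply invoke Theorem~\ref{magthm}. So you cannot take $\E\smallavg{(R_{1,2}^2-q^2)^2}\to 0$ or the ultrametric four-point support of $(R_{1,2},R_{1,3},R_{2,3})$ as input here. Even internally your plan has a loop: to prove Part~(5) you want $\E\smallavg{m^2}^2\to q^2$, which is essentially Part~(1), which you said you would derive from Part~(5). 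And your cross-term $\E\smallavg{R_{1,2}m_1m_2} = \E\bigl[\frac{1}{|B_n|^3}\sum_{i,j,k}\smallavg{\sigma_i\sigma_j}\smallavg{\sigma_i\sigma_k}\bigr]$ is not a three-replica overlap expectation (those involve $\smallavg{\sigma_i}$, not $\smallavg{\sigma_i\sigma_j}$), so Theorem~\ref{ultrathm} does not speak to it directly.

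The paper's route is entirely different and does not use \ref{rsbthm} or \ref{ultrathm} at all. Writing $L(\sigma)=\frac{\beta h}{\sqrt{|B_n|}}\sum_i J_i\sigma_i$, one has $\smallavg{f}=\smallavg{f e^L}_0/\smallavg{e^L}_0$ where $\smallavg{\cdot}_0$ is the pure Ising measure at $h=0$. The key computational input is Theorem~\ref{diffthm2} (the Ising-model concentration $\smallavg{(m^2-q)^2}_0\to 0$, $\smallavg{(R_{1,2}^2-q^2)^2}_0\to 0$, itself proved from the random-cluster analysis of Section~3) together with the CLT-type Lemma~\ref{gaussianlmm} for the moment generating function of $\sum a_iJ_i$. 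Lemmas~\ref{maglmm1} and~\ref{maglmm2} then give explicit $L^2(\P)$ approximations $\smallavg{e^L}_0\approx e^{\frac12\beta^2h^2(1-q)}\cosh X_n$ and $\smallavg{me^L}_0\approx \sqrt{q}\,e^{\frac12\beta^2h^2(1-q)}\sinh X_n$, from which $\smallavg{m}\approx\sqrt{q}\tanh X_n$ follows by dividing (using $\smallavg{e^L}_0\ge 1$ via Jensen); the sign comes out automatically from this computation, with no separate FKG argument needed. Parts~(1), (4), and (5) are then obtained by transferring the corresponding $\smallavg{\cdot}_0$ facts (Theorem~\ref{diffthm2} and Lemma~\ref{maglmm3}) through the tilt $e^L$ via Cauchy--Schwarz and the bound $\smallavg{e^L}_0\ge 1$.
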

This theorem is the basis for proving the previously stated results about the overlap, because it says that the overlap between two configuration is approximately equal to the product of their magnetizations with high probability, and gives the approximate distribution of the magnetization, which is concentrated near $q$ or $-q$ with high probability. In addition to the previously stated results, it also gives the asymptotic quenched distribution  of any number of overlaps, because conditional on the disorder, $m(\sigma^1), m(\sigma^2),\ldots$ behave like i.i.d.~random variables taking values in $\{-\sqrt{q}, \sqrt{q}\}$ with a certain distribution, and $R_{i,j} \approx m(\sigma^i)m(\sigma^j)$ for each $i\ne j$.

\subsection{Two pure states}
As mentioned in the introduction, it is unclear how to rigorously define pure states outside the setting of Markov random fields on a lattice, where it is well-understood~\cite{georgii11}. We will now give a general definition of the number of pure states in a sequence of models, and show that according to this definition, our model has two pure states in the $n\to\infty$ limit. 

Let $\{N_n\}_{n\ge 1}$ be a sequence of positive integers tending to infinity, and let $(X_{n,i})_{n\ge 1, \, 1\le i\le N_n}$ 
be a triangular array of real-valued random variables. For each $n$, let $\pi_n$ be a uniform random permutation of $1,\ldots,N_n$, independent of the $X_{n,i}$'s. Let $Y_{n,i} := X_{n, \pi_n(i)}$. Let $Z = (Z_1,Z_2,\ldots)$ be a sequence of random variables such that for each $k$, $(Y_{n,1},\ldots, Y_{n,k})$ converges to $(Z_1,\ldots,Z_k)$ in distribution as $n\to\infty$. Then note that $Z$ is an infinite exchangeable sequence of random variables. By De Finetti's theorem~\cite[Theorem 1.1]{kallenberg05}, the law of $Z$ is a mixture of probability laws of i.i.d.~sequences, with a unique mixing measure~\cite[Proposition 1.4]{kallenberg05}. 
\begin{defn}\label{statedef1}
In the above setting, let $\mu$ be the mixing measure of the law of $Z$. Let $p$ be the size of the support of $\mu$, which may be a positive integer or infinity. Then, we will say that the law of $(X_{n,i})_{1\le i\le N_n}$ has $p$ pure states asymptotically as $n\to\infty$.
\end{defn}
For example, if the $X_{n,i}$'s are i.i.d., then so are the $Z_i$'s, and therefore $p=1$. On the other hand, suppose that $N_n = n$ and $X_{n,i} = Y + W_i$, $i=1,\ldots,n$, where $Y$ and $W_1,W_2,\ldots$ are i.i.d.~standard Gaussian random  variables. If $\pi_n$ is a uniform random permutation of $1,\ldots,n$, then for any $n$ and $k$, the law of $(X_{n,\pi_n(1)},\ldots, X_{n,\pi_n(k)})$ is the same as the law of $(Z_1,\ldots,Z_k)$, where $Z_i = Y+W_i$. Now, $Z_1,Z_2,\ldots$ is an infinite exchangeable sequence, which is conditionally i.i.d.~given $Y$. Since the support of $Y$ contains infinitely many points, we deduce that the law of $(X_{n,i})_{1\le i\le n}$ has infinitely many pure states as $n\to\infty$. 

In the setting of disordered systems, the law of $(X_{n,i})_{1\le i\le N_n}$ is itself random, and may not be converging to a deterministic limit in any reasonable sense as $n\to\infty$. Thus, we have to modify Definition \ref{statedef1} to accommodate this scenario. Let $Y_{n,i} = X_{n,\pi_n(i)}$ be defined as before. For each $k$, let $\nu_{n,k}$ be the law of $(Y_{n,1},\ldots,Y_{n,k})$, which is now a random probability measure. Let $\nu$ be a random probability measure taking value in the set of laws of infinite exchangeable sequences. Let $\nu_k$ be the (random) law of the first $k$ coordinates of a sequence with law $\nu$. 
\begin{defn}\label{statedef2}
Let $\nu$ be as above, and let $\mu$ be the (random) mixing measure of a random probability measure with law $\nu$.  Suppose that there is a deterministic $p\in \{1,2,\ldots\}\cup \{\infty\}$ such that with probability one, the support of $\mu$ has $p$ points. Also, suppose that for each $k$, the law of $\nu_{n,k}$ converges weakly to the law of $\nu_k$. Then, we will say that the (random) law of $(X_{n,i})_{1\le i\le N_n}$ has $p$ pure states asymptotically as $n\to\infty$. 
\end{defn}
The following result shows that under the above definition, our model has two pure states asymptotically as $n\to\infty$. This holds for any $h$, and in particular $h=0$, which is the case of the ordinary Ising model.
\begin{thm}\label{purethm}
Let $d$, $n$, $\beta_0$ and $\beta$ be as in Theorem \ref{rsbthm}. Then the random probability measure on $\Sigma_n$ defined by the model from Theorem \ref{rsbthm} has two pure states asymptotically as $n\to\infty$, as defined in Definition \ref{statedef2}. 
\end{thm}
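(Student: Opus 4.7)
My plan is to identify the asymptotic quenched joint law of the permuted spins $(\sigma_{\pi_n(1)},\ldots,\sigma_{\pi_n(k)})$ explicitly using Theorem \ref{magthm}, and then read off the de Finetti mixing measure directly. Since $(\pi_n(1),\ldots,\pi_n(k))$ is a uniformly chosen ordered $k$-tuple of distinct sites in $B_n$, using $\1_{\sigma_i=s}=(1+s\sigma_i)/2$ gives, for each $s=(s_1,\ldots,s_k)\in\{-1,1\}^k$,
\[
\nu_{n,k}(s)=\frac{1}{(|B_n|)_k}\sum_{\substack{i_1,\ldots,i_k\\\text{distinct}}}\Bigl\langle\prod_{j=1}^k\frac{1+s_j\sigma_{i_j}}{2}\Bigr\rangle=\Bigl\langle\prod_{j=1}^k\frac{1+s_jm}{2}\Bigr\rangle+O(|B_n|^{-1}),
\]
because $|B_n|^{-1}\sum_i(1+s\sigma_i)/2=(1+sm)/2$ and because dropping the distinctness constraint omits $O(|B_n|^{k-1})$ summands, each bounded by $1$.

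Next I would collapse $m$ onto its two asymptotic values $\pm\sqrt{q}$. Let $p_\pm^{(n)}:=\smallavg{\1_{\pm m>0}}$, with $\{m=0\}$ absorbed into the minus side, and write $P_s(x):=\prod_j(1+s_jx)/2$; then $P_s$ is $[0,1]$-valued and $(k/2)$-Lipschitz on $[-1,1]$, so splitting $\smallavg{P_s(m)}$ by the sign of $m$ yields
\[
\Bigl|\smallavg{P_s(m)}-p_+^{(n)}P_s(\sqrt{q})-p_-^{(n)}P_s(-\sqrt{q})\Bigr|\le\frac{k}{2}\smallavg{|m-\sqrt{q}\,\sign(m)|},
\]
with the convention $\sign(0):=-1$. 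Because $(m-\sqrt{q}\,\sign(m))^2=(|m|-\sqrt{q})^2=(m^2-q)^2/(|m|+\sqrt{q})^2\le q^{-1}(m^2-q)^2$, Theorem \ref{magthm} and Jensen's inequality give $\E\smallavg{|m-\sqrt{q}\,\sign(m)|}\to 0$. Combining the two displays, in $L^1(\E)$ I obtain
\[
\nu_{n,k}(s)=p_+^{(n)}\prod_{j=1}^k\frac{1+s_j\sqrt{q}}{2}+p_-^{(n)}\prod_{j=1}^k\frac{1-s_j\sqrt{q}}{2}+o(1).
\]
Specializing to $k=1$ yields $\smallavg{m}=\sqrt{q}\,(p_+^{(n)}-p_-^{(n)})+o_{L^1(\E)}(1)$; combined with \eqref{mform} and the central limit theorem $X_n\Rightarrow\sqrt{q}\,\beta h Z$, this gives the joint convergence in law $(p_+^{(n)},p_-^{(n)})\Rightarrow((1+T)/2,(1-T)/2)$ with $T:=\tanh(\sqrt{q}\,\beta h Z)$.

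Therefore $\nu_{n,k}$ converges in law to the random measure $\nu_k$ on $\{-1,1\}^k$ obtained by substituting $p_\pm^\infty:=(1\pm T)/2$ for $p_\pm^{(n)}$ in the mixture formula. The family $\{\nu_k\}_{k\ge 1}$ is consistent and exchangeable, so it determines a random probability measure $\nu$ on $\{-1,1\}^\N$ that, conditional on $T$, is the mixture $p_+^\infty\mu_++p_-^\infty\mu_-$ of the two product Bernoulli laws $\mu_\pm$ with per-coordinate means $\pm\sqrt{q}$. Since $q>0$ the laws $\mu_+$ and $\mu_-$ are distinct, and since $|T|<1$ a.s.\ the weights $p_\pm^\infty$ both lie in $(0,1)$. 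Hence the de Finetti mixing measure of $\nu$ is a.s.\ the two-point measure $p_+^\infty\delta_{\mu_+}+p_-^\infty\delta_{\mu_-}$, and Definition \ref{statedef2} is satisfied with $p=2$. The principal technical step is the polynomial replacement $P_s(m)\rightsquigarrow p_+^{(n)}P_s(\sqrt{q})+p_-^{(n)}P_s(-\sqrt{q})$ above; once this is in place, the rest is a transparent application of de Finetti's theorem combined with the input from Theorem \ref{magthm}.
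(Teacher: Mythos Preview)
Your proof is correct and takes a genuinely different route from the paper. The paper proceeds by computing the mixed moments $\smallavg{\sigma_{\pi_n(1)}\cdots\sigma_{\pi_n(l)}}$ separately for even and odd $l$ (the odd case requiring \eqref{mlim} and \eqref{rlimit}), then constructs an auxiliary two-level exchangeable sequence $(\tau_{n,i})$ whose moments match, and finally argues that matching moments on $\{-1,1\}$-valued variables forces the L\'evy--Prokhorov distance between the quenched laws to vanish. You instead compute the full quenched probability mass function $\nu_{n,k}(s)$ directly via the identity $\mathbf{1}_{\sigma_i=s}=(1+s\sigma_i)/2$, reduce it to $\smallavg{P_s(m)}$, and then use only the concentration $\E\smallavg{(m^2-q)^2}\to 0$ together with \eqref{mform} to collapse this onto the explicit two-point mixture.

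Your approach is cleaner in that it bypasses the even/odd case split and the construction of the auxiliary sequence $(\tau_{n,i})$; once you have the formula $\nu_{n,k}(s)=p_+^{(n)}P_s(\sqrt{q})+p_-^{(n)}P_s(-\sqrt{q})+o_{L^1(\E)}(1)$, the de~Finetti decomposition is read off immediately. The paper's moment route has the advantage of making the connection to Theorem~\ref{diffthmmain} (the $l$-point correlation result) explicit, and of yielding the individual moment asymptotics \eqref{evenl}--\eqref{oddl} as byproducts. Both proofs rest on the same inputs from Theorem~\ref{magthm}. One small remark: for the joint convergence in law of the random measure $\nu_{n,k}$ you are implicitly using that $\{-1,1\}^k$ is finite, so that weak convergence of the law of $\nu_{n,k}$ is equivalent to convergence in law of the finite vector $(\nu_{n,k}(s))_{s}$, which follows from Slutsky since all coordinates are continuous functions of the same pair $(p_+^{(n)},p_-^{(n)})$ plus $o_{L^1}$ errors.
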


\subsection{Failure of the Ghirlanda--Guerra identities}
The Ghirlanda--Guerra (GG) identities are a set of identities that are satisfied in the infinite volume limits of many mean-field spin glass models~\cite{ghirlandaguerra98}. A symmetric array of random variables $(S_{i,j})_{1\le i,j<\infty}$ is said to satisfy the GG identities if for any $k$, any bounded measurable function $f$  of $(S_{i,j})_{1\le i,j\le k}$, and any bounded measurable function $\psi:\R\to \R$, 
\begin{align}\label{ggid}
\E(f \psi(S_{1,k+1})) = \frac{1}{k}\E(f) \E(\psi(S_{1,2})) + \frac{1}{k}\sum_{i=2}^k \E(f \psi(S_{1,i})). 
\end{align}
These identities have been proved for the limiting joint law of overlaps for a variety of mean-field models of spin glasses. (Here, the ``joint law'' refers to the unconditional distribution, averaged over the disorder.) They form the basis of Panchenko's proof of ultrametricity in \cite{panchenko13a}, following a line of prior work connecting the GG identities with ultrametricity~\cite{aizenmancontucci98, arguinaizenman09, panchenko10, talagrand10a}. The following theorem shows that the GG identities are not valid for our model. This shows that while the GG identities are sufficient for ultrametricity of the overlap (as shown by Panchenko~\cite{panchenko13a}), they are not necessary.
\begin{thm}[Failure of the Ghirlanda--Guerra identities]\label{ggthm}
Let $d$, $n$, $\beta_0$ and $\beta$ be as in Theorem \ref{rsbthm}. Then the limiting joint distribution of the overlaps, as $n\to\infty$, does not satisfy the Ghirlanda--Guerra identities. 
\end{thm}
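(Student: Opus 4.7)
The plan is to combine Theorems \ref{rsbthm}, \ref{ultrathm}, and \ref{magthm} to write down the disorder-averaged limit law of the overlap array $(R_{i,j})_{i \neq j}$ in closed form, and then to exhibit a single pair $(f, \psi)$ for which the two sides of \eqref{ggid} disagree.

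First, I would identify the limit law. Let $Z$ be standard Gaussian, set $T := \tanh(\sqrt{q}\beta h Z)$, and let $\epsilon_1, \epsilon_2, \ldots$ be conditionally i.i.d.\ $\pm 1$-valued given $Z$ with $\P(\epsilon_i = 1 \mid Z) = (1+T)/2$. Theorem \ref{magthm} yields $R_{i,j} \approx m(\sigma^i) m(\sigma^j)$ in $L^2$, together with $m^2 \approx q$ and $\smallavg{m} \approx \sqrt{q}\tanh X_n$. Combined with the CLT statement that $X_n$ converges in distribution to $\sqrt{q}\beta h Z$, and with the fact (implicit in Theorem \ref{purethm}) that conditionally on the disorder the magnetizations $m(\sigma^1), m(\sigma^2), \ldots$ behave like i.i.d.\ draws from two pure states of opposite sign, I would deduce that $(R_{i,j})_{i \neq j}$ converges in distribution to $(q \epsilon_i \epsilon_j)_{i \neq j}$. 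Setting $\alpha := \E[T^2] = \E[\tanh^2(\sqrt{q}\beta h Z)]$, a direct conditional expectation calculation then gives
\begin{align*}
\E[R_{1,2}] &= q \E[\epsilon_1 \epsilon_2] = q \E[T^2] = q\alpha, \\
\E[R_{1,2}^2] &= q^2, \\
\E[R_{1,2} R_{1,3}] &= q^2 \E[\epsilon_2 \epsilon_3] = q^2 \alpha.
\end{align*}

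Next, I would specialize \eqref{ggid} to $k = 2$, $f = R_{1,2}$, and $\psi$ any bounded measurable function agreeing with the identity on $[-1,1]$ (this is legal since overlaps take values in $[-1,1]$). Substituting the three formulas above, the GG identity would force
\[
q^2 \alpha = \tfrac{1}{2}(q\alpha)(q\alpha) + \tfrac{1}{2} q^2 = \tfrac{q^2}{2}(\alpha^2 + 1),
\]
which simplifies to $(\alpha - 1)^2 = 0$. But $|T| < 1$ almost surely, so $\alpha < 1$ strictly (and $\alpha = 0$ if $h = 0$), giving a contradiction and proving the theorem.

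The only nontrivial step in this plan is the identification of the limit law of the full overlap array: it requires combining the $L^2$-convergences of Theorems \ref{rsbthm} and \ref{magthm} with the ultrametric structure from Theorem \ref{ultrathm} and the conditional i.i.d.\ description of the magnetizations implicit in Theorem \ref{purethm}, so as to rule out hidden correlations among the $\epsilon_i$'s beyond those mediated by the single Gaussian $Z$. Once that description is in hand, the failure of \eqref{ggid} reduces to the short algebraic identity above.
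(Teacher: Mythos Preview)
Your proposal is correct and follows essentially the same route as the paper: both specialize \eqref{ggid} to $k=2$, $f=R_{1,2}$, $\psi(x)=x$, compute $\E[R_{1,2}]=q\alpha$, $\E[R_{1,2}^2]=q^2$, $\E[R_{1,2}R_{1,3}]=q^2\alpha$ with $\alpha=\E[\tanh^2(\sqrt{q}\beta hZ)]$, and arrive at the same contradiction $(\alpha-1)^2=0$. The only difference is cosmetic: the paper does not bother to identify the full limit law of the overlap array via the $\epsilon_i$ representation, but instead reads off the three needed moments directly from Theorem~\ref{magthm} by writing $\E\smallavg{R_{1,2}R_{1,3}}\approx\E(\smallavg{m^2}\smallavg{m}^2)$, $\E\smallavg{R_{1,2}}\approx\E(\smallavg{m}^2)$, $\E\smallavg{R_{1,2}^2}\approx\E(\smallavg{m^2}^2)$ and then plugging in $\smallavg{m^2}\to q$ and $\smallavg{m}\approx\sqrt{q}\tanh X_n$; this bypasses the step you flag as ``the only nontrivial step'' entirely, since the conditional i.i.d.\ structure of the $m(\sigma^i)$'s given the disorder is automatic from the definition of replicas and needs no appeal to Theorem~\ref{purethm} or~\ref{ultrathm}.
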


\subsection{Failure of spin glass behavior at other field strengths}
One may wonder if taking the field strength to be  proportional to $|B_n|^{-\frac{1}{2}}$ is the only way to get  replica symmetry breaking and non-self-averaging in the large $n$ limit. Our next result shows that this is indeed the case for Gaussian disorder (and it is reasonable to conjecture that the same holds for any i.i.d.~disorder). Replica symmetry does not break if the parameter $h$ is allowed to go to $\pm\infty$ as $n\to\infty$, and the non-self-averaging of the quenched law of the overlap breaks down if $h$ is allowed to go to zero as $n\to\infty$. 
\begin{thm}[Failure of spin glass behavior at other field strengths]\label{negthm}
Suppose that  the parameter $h$ in the Hamiltonian $H_n$ is allowed to vary with $n$. If $h\to0$ as $n\to\infty$, then the distance between the quenched law of $R_{1,2}$ under our model and the law of $R_{1,2}$ under the Ising model on $B_n$ at the same temperature and free boundary condition tends to zero in probability as $n\to\infty$, for any metric that metrizes weak convergence of probability measures. In particular, non-self-averaging fails. On the other hand, if $|h|\to \infty$ as $n\to\infty$, and if the $J_i$'s are i.i.d.~standard Gaussian random variables, then $\E\smallavg{(R_{1,2}-\smallavg{R_{1,2}})^2} \to 0$, meaning that replica symmetry does not break. These conclusions hold at any temperature.  
\end{thm}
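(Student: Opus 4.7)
For both parts let $\mu_n^h$ denote the random Gibbs measure on $\Sigma_n$ defined by the Hamiltonian $H_n$ in \eqref{hamil}, and let $\mu_n^0$ denote the pure Ising Gibbs measure (the $h=0$ case) at the same $\beta$ with free boundary condition. The two assertions of Theorem~\ref{negthm} require different techniques and I would handle them separately.

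For the weak-field case $h=h_n\to 0$, the plan is to show $\tv(\mu_n^h,\mu_n^0)\to 0$ in probability as $n\to\infty$. This suffices because $\tv(\mu_n^h\otimes\mu_n^h,\mu_n^0\otimes\mu_n^0)\le 2\tv(\mu_n^h,\mu_n^0)$ and TV distance contracts under pushforward, so the quenched law of $R_{1,2}$ under the random-field model becomes indistinguishable from the deterministic Ising law in any metric for weak convergence. Setting $Y(\sigma):=\exp(\beta h |B_n|^{-1/2}\sum_i J_i \sigma_i)$ gives $d\mu_n^h/d\mu_n^0=Y/\mu_n^0(Y)$ and hence $2\tv(\mu_n^h,\mu_n^0)=\mu_n^0(|Y-\mu_n^0(Y)|)/\mu_n^0(Y)$. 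Bounding the numerator by Cauchy--Schwarz and computing the moment generating function of $\sum_i J_i \sigma_i$ (closed form for Gaussian $J_i$; for general disorder use $\phi(s)=1+s^2/2+O(s^3)$), I would verify
\[
\E_J\bigl[\mu_n^0(Y^2)-\mu_n^0(Y)^2\bigr]=O(h^2),\qquad \E_J\bigl[(\mu_n^0(Y)-1)^2\bigr]=O(h^2),
\]
which jointly give $\tv(\mu_n^h,\mu_n^0)\to 0$ in $L^2(\P_J)$.

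For the strong-field case $|h|\to\infty$ with Gaussian $J_i$, the plan combines FKG with a single Gaussian integration by parts. Writing $C_{ij}:=\smallavg{\sigma_i\sigma_j}-\smallavg{\sigma_i}\smallavg{\sigma_j}$ and expanding,
\[
\smallavg{(R_{1,2}-\smallavg{R_{1,2}})^2}=\frac{1}{|B_n|^2}\sum_{i,j}\bigl(\smallavg{\sigma_i\sigma_j}^2-\smallavg{\sigma_i}^2\smallavg{\sigma_j}^2\bigr)\le\frac{2}{|B_n|^2}\sum_{i,j}C_{ij},
\]
using $a^2-b^2=(a-b)(a+b)$ with $|a+b|\le 2$ together with the FKG inequality $C_{ij}\ge 0$, which holds because the interactions are ferromagnetic (with arbitrary external field). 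Recognizing $|B_n|^{-2}\sum_{i,j}C_{ij}=\Var_{\mu_n^h}(m)$ with $m=|B_n|^{-1}\sum_i\sigma_i$, and using the identity $\partial_{J_j}\smallavg{f}=\beta h|B_n|^{-1/2}(\smallavg{f\sigma_j}-\smallavg{f}\smallavg{\sigma_j})$ with $f=m$, I would rewrite
\[
\Var_{\mu_n^h}(m)=\frac{1}{\beta h\sqrt{|B_n|}}\sum_j \partial_{J_j}\smallavg{m}.
\]
Taking $\E$, applying Gaussian integration by parts $\E[\partial_{J_j}F]=\E[J_jF]$, and then Cauchy--Schwarz with $\E[(\sum_j J_j)^2]=|B_n|$ and $|\smallavg{m}|\le 1$ gives
\[
\E\smallavg{(R_{1,2}-\smallavg{R_{1,2}})^2}\le 2\,\E[\Var_{\mu_n^h}(m)]=\frac{2}{\beta h\sqrt{|B_n|}}\E\Bigl[\Bigl(\sum_j J_j\Bigr)\smallavg{m}\Bigr]\le \frac{2}{\beta|h|},
\]
which vanishes since $|h|\to\infty$ at any rate.

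The main obstacle is in Part 2: one needs to convert the sum $\sum_{i,j}\E[C_{ij}]$ (which measures replica symmetry) into something controllable by the small parameter $1/|h|$. The FKG non-negativity $C_{ij}\ge 0$ is what permits collapsing the double sum into the single Gibbs variance $|B_n|^2\Var_{\mu_n^h}(m)$ without an absolute-value loss, and Gaussian integration by parts on the derivative $\sum_j\partial_{J_j}\smallavg{m}$ then produces the factor $\sum_j J_j$, whose $O(\sqrt{|B_n|})$ size cancels precisely against the $|B_n|^{-1/2}$ normalization of the field term. Without FKG the analogue $\sum_{i,j}|C_{ij}|$ appears substantially harder to handle. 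Part 1 is comparatively routine; its only subtlety is that $Y$ is close to $1$ only in $L^2$ over $(\sigma,J)$ and not uniformly in $\sigma$, which is exactly what the $|B_n|^{-1/2}$ scaling of the field is designed to produce.
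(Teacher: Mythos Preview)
Your proposal is correct for both parts. Part~1 is essentially the same idea as the paper's, packaged slightly differently: the paper shows $|\smallavg{R_{1,2}^k}-\smallavg{R_{1,2}^k}_0|\le 2\smallavg{|e^L-1|}_0$ for every $k$ and then bounds $\E\smallavg{|e^L-1|}_0=O(|h|)$ via Lemma~\ref{gaussianlmm}, whereas you go straight to the total variation distance between the Gibbs measures and bound $\E_J[\mu_n^0(Y^2)-\mu_n^0(Y)^2]=O(h^2)$. Both rely on Jensen's inequality $\smallavg{e^L}_0\ge 1$ to handle the denominator.

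Part~2 is where your argument genuinely diverges from the paper, and your route is more elementary. The paper introduces the free energy $F$, bounds $\var(F)\le \beta^2h^2$ by the Gaussian Poincar\'e inequality, and then invokes a second-order lower bound $\var(F)\ge \frac{1}{2}\sum_{i,j}[\E(\partial^2_{J_iJ_j}F)]^2$ from \cite[Theorem~3.1]{chatterjee18}; combining these yields $\sum_{i,j}(\E C_{ij})^2\le 2|B_n|^2/(\beta^2h^2)$, and a final Cauchy--Schwarz gives $\E\smallavg{(R_{1,2}-\smallavg{R_{1,2}})^2}\le 2^{3/2}/(\beta|h|)$. You bypass the free energy entirely: after the identical FKG reduction to $\frac{2}{|B_n|^2}\sum_{i,j}C_{ij}=2\Var_{\mu_n^h}(m)$, a single Gaussian integration by parts on $\smallavg{m}$ converts $\E[\Var_{\mu_n^h}(m)]$ directly into $(\beta h\sqrt{|B_n|})^{-1}\E[(\sum_j J_j)\smallavg{m}]$, and Cauchy--Schwarz with $|\smallavg{m}|\le 1$ finishes. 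This avoids the need for the variance lower-bound machinery and produces the sharper constant $2/(\beta|h|)$. The paper's route has the advantage of making explicit that the phenomenon is governed by fluctuations of the free energy, which connects to the broader framework of \cite{chatterjee15}, but yours is the shorter self-contained proof.
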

The second assertion of the above theorem extends \cite[Lemma 2.6]{chatterjee15} by showing that replica symmetry holds not only when the parameter $h$ in the standard form \eqref{original} of the RFIM Hamiltonian is fixed and nonzero, but is even allowed to go to zero slower than $|\Lambda|^{-\frac{1}{2}}$ (for $\Lambda = B_n$).

\subsection{The antiferromagnetic RFIM}
For the sake of completeness, let us also consider the random field antiferromagnetic Ising model on $B_n$ under free boundary condition. This is the model where the minus in front of the first term on the right side in  \eqref{hamil} is replaced by a plus. That is, the Hamiltonian is
\begin{align}\label{antihamil}
H_n(\sigma) := \sum_{\{i,j\}\in E_n} \sigma_i \sigma_j  - \frac{h}{\sqrt{|B_n|}} \sum_{i\in B_n} J_i \sigma_i. 
\end{align}
All of the results for the ferromagnetic model continue to hold for the antiferromagnetic version, except one --- the magnetization tends to zero instead of converging in law to a non-degenerate distribution.
\begin{thm}[Results for the antiferromagnetic RFIM]\label{antithm}
Theorems \ref{rsbthm}, \ref{ultrathm} and \ref{negthm} remain valid for the antiferromagnetic model, with $J_i$ replaced by $(-1)^{|i|_1}J_i$ in the \eqref{xndef}, where $|i|_1$ is the $\ell^1$ norm of $i$. The magnetization, however, satisfies $\E\smallavg{m^2} \to 0$ as $n\to \infty$.
\end{thm}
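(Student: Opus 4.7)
The plan is built around the classical bipartite gauge transformation that swaps ferromagnetic and antiferromagnetic interactions on $\Z^d$. Set $\tau_i := (-1)^{|i|_1}\sigma_i$ and $\tilde J_i := (-1)^{|i|_1}J_i$ for each $i \in B_n$. For every nearest-neighbor pair $\{i,j\}\in E_n$ the parities of $|i|_1$ and $|j|_1$ differ, so $\sigma_i\sigma_j = -\tau_i\tau_j$, while the field term transforms as $\sum_i J_i\sigma_i = \sum_i \tilde J_i\, \tau_i$. Substituting into \eqref{antihamil} shows that the antiferromagnetic Hamiltonian in $\sigma$ coincides with the ferromagnetic Hamiltonian \eqref{hamil} in $\tau$ with disorder $(\tilde J_i)$. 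Since $\sigma\leftrightarrow\tau$ is an involutive bijection of $\Sigma_n$, the antiferromagnetic Gibbs measure is the pushforward of the ferromagnetic one under this bijection. The family $(\tilde J_i)_{i\in B_n}$ is still independent with mean zero, unit variance, and the same moment-generating-function bound as $(J_i)_{i\in B_n}$; the proofs of the earlier theorems only use these quantitative properties of the disorder rather than strict identical distribution, so they apply verbatim to the ferromagnetic model with disorder $(\tilde J_i)$.

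Since $\sigma^a_i\sigma^b_i = \tau^a_i\tau^b_i$ for every $i$ and every pair of replicas $a,b$, all overlaps are gauge invariant. Therefore Theorems \ref{rsbthm}, \ref{ultrathm} and \ref{negthm}, whose statements concern only the joint laws of the overlaps, transfer immediately from their ferromagnetic counterparts applied in the $\tau$ picture with disorder $(\tilde J_i)$. The only change in the formulas is in \eqref{xndef}, where $J_i$ gets replaced by $\tilde J_i = (-1)^{|i|_1}J_i$, producing precisely the staggered sum asserted in Theorem \ref{antithm}.

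The magnetization statement is the only piece that is not gauge invariant: under the bijection $m(\sigma)$ becomes the \emph{staggered} magnetization $m^*(\tau) := |B_n|^{-1}\sum_i (-1)^{|i|_1}\tau_i$ of the ferromagnetic configuration. Thus the claim reduces to $\E\smallavg{(m^*(\tau))^2}\to 0$ for the ferromagnetic RFIM on $B_n$ with disorder $(\tilde J_i)$. Expanding
\begin{align*}
\E\smallavg{(m^*(\tau))^2} \;=\; \frac{1}{|B_n|^2}\sum_{i,j\in B_n}(-1)^{|i|_1+|j|_1}\,\E\smallavg{\tau_i\tau_j},
\end{align*}
the elementary telescoping identity $\sum_{i\in B_n}(-1)^{|i|_1} = \bigl(\sum_{k=-n}^{n}(-1)^{|k|}\bigr)^d \in \{-1,+1\}$ shows that the staggered sign pattern forces massive cancellation whenever $\smallavg{\tau_i\tau_j}$ varies only weakly with $(i,j)$.

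Making this cancellation quantitative is the main technical point. The strategy I would follow is to decompose the Gibbs measure into its two pure-state components, identified with $o(1)$ error by the sign of $m(\tau)$ (well-defined with probability tending to $1$ by Theorems~\ref{magthm} and~\ref{purethm}). Inside each pure state the local magnetizations $\smallavg{\tau_i\mid\pm}$ concentrate around $\pm\sqrt{q}$ by a conditional analogue of \eqref{mformgen}, which reduces the first-order contribution to the staggered sum to $O(|B_n|^{-2})$ via the telescoping identity above. The remaining connected two-point correlations within each pure phase are short-ranged at $\beta > \beta_0$ by classical subcritical Ising estimates (cluster expansion or Peierls), and the staggered sign pattern then brings the variance contribution down to $O(|B_n|^{-1})$. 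Combined, these yield $\E\smallavg{(m^*(\tau))^2} = O(|B_n|^{-1}) \to 0$, as required. The hardest step is justifying the pure-state decomposition together with the decay-of-correlations bound at the quantitative level needed for this cancellation; this is not a spin-glass issue but a classical Ising one, and ought to be reachable by adapting standard low-temperature methods.
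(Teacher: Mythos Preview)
Your gauge transformation and the transfer of the overlap results are correct and match the paper's argument exactly: the paper also maps the antiferromagnetic model to the ferromagnetic one via $\eta_i = (-1)^{|i|_1}\sigma_i$, observes that overlaps are invariant, and notes that the disorder becomes $(-1)^{|i|_1}J_i$ in the formula for $X_n$.

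For the magnetization claim, however, your route is substantially more complicated than necessary and leaves a genuine gap. You propose conditioning on the sign of $m(\tau)$, invoking a conditional version of \eqref{mformgen} to control the first-order piece, and then appealing to exponential decay of truncated correlations inside each pure phase to handle the covariance piece. Neither the conditional concentration of $\smallavg{\tau_i\mid\pm}$ nor the quantitative correlation decay you need is proved in the paper, and you acknowledge this is the ``hardest step''. The paper sidesteps this entirely. It first treats $h=0$: there one has
\[
\smallavg{m^2}_{a,0} = \frac{1}{|B_n|^2}\sum_{i,j\in B_n}(-1)^{|i|_1+|j|_1}\smallavg{\sigma_i\sigma_j}_0,
\]
and Theorem~\ref{diffthm} already gives $\smallavg{\sigma_i\sigma_j}_0 = q + o(1)$ uniformly over most pairs $(i,j)$. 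Subtracting the constant $q$ and using $|B_n|^{-2}\sum_{i,j}(-1)^{|i|_1+|j|_1}\to 0$ kills the main term directly; the remainder is bounded by $\delta_n + C\ve$ from the ``bad'' pairs. No pure-state decomposition and no correlation-decay estimate are needed: the uniformity of the \emph{unconditional} two-point function is enough. The passage from $h=0$ to general $h$ is then just the tilting trick already used in the proof of Theorem~\ref{magthm}: Jensen gives $\smallavg{e^L}_{a,0}\ge 1$, so $\E\smallavg{|m|}_a \le \E\smallavg{|m|e^L}_{a,0} \le (\smallavg{m^2}_{a,0}\,\E\smallavg{e^{2L}}_{a,0})^{1/2}\to 0$ by Cauchy--Schwarz and Lemma~\ref{gaussianlmm}.

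In short, replace your pure-state argument by the one-line observation that the uniformity of two-point correlations (Theorem~\ref{diffthm}) plus alternating-sign cancellation handles $h=0$, and then tilt.
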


\subsection{Uniformity of correlations in the ordinary Ising model}
In addition to the above results, our analysis also reveals the following ``uniformity of correlations'' for the ordinary  Ising model on $B_n$ under free boundary condition and subcritical temperatures. Namely, $\smallavg{\sigma_i\sigma_j}\approx q$ for most $i,j\in B_n$. More generally, for any even $l$ and most $i_1,\ldots, i_l\in B_n$, $\smallavg{\sigma_{i_1}\cdots \sigma_{i_l}} \approx q^{\frac{l}{2}}$. This result is the foundation for most of the other results in this paper.  Note that the correlation is zero if $l$ is odd due to the invariance of the model under the transform $\sigma \to -\sigma$.
\begin{thm}[Uniformity of correlations in the Ising model]\label{diffthmmain}
Let $d$, $n$, $\beta_0$, $\beta$ and $q$ be as in Theorem \ref{rsbthm}. Consider the ferromagnetic Ising model on $B_n$ at inverse temperature $\beta$ and free boundary condition (i.e., the model with Hamiltonian given in \eqref{hamil} but with $h=0$). Then for any even positive integer $l$,
\[
\lim_{n\to\infty} \frac{1}{|B_n|^l} \sum_{i_1,\ldots,i_l\in B_n}|\smallavg{\sigma_{i_1}\cdots\sigma_{i_l}} - q^{\frac{l}{2}}| = 0.
\]
\end{thm}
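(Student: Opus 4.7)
The plan is to apply Cauchy--Schwarz to reduce the claim to two moment convergences for the ordinary Ising model, namely $\smallavg{m^l}\to q^{l/2}$ and $\smallavg{R_{1,2}^l}\to q^l$, and then extract those from the concentration of the magnetization at $\pm\sqrt{q}$ in the low-temperature free boundary state.

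First I would invoke a replica trick. If $\sigma^1,\sigma^2$ are drawn independently from the Gibbs measure, then $\smallavg{\sigma_{i_1}\cdots\sigma_{i_l}}^2 = \smallavg{\prod_{k=1}^l\sigma^1_{i_k}\sigma^2_{i_k}}$. Averaging over $(i_1,\ldots,i_l)\in B_n^l$ yields
\[
\frac{1}{|B_n|^l}\sum_{i_1,\ldots,i_l}\smallavg{\sigma_{i_1}\cdots\sigma_{i_l}}=\smallavg{m^l},\qquad \frac{1}{|B_n|^l}\sum_{i_1,\ldots,i_l}\smallavg{\sigma_{i_1}\cdots\sigma_{i_l}}^2=\smallavg{R_{1,2}^l},
\]
so by Cauchy--Schwarz,
\[
\biggl(\frac{1}{|B_n|^l}\sum_{i_1,\ldots,i_l}|\smallavg{\sigma_{i_1}\cdots\sigma_{i_l}}-q^{l/2}|\biggr)^2 \le \smallavg{R_{1,2}^l}-2q^{l/2}\smallavg{m^l}+q^l.
\]
Hence it is enough to establish the two moment convergences above for every even $l$.

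For those, I would argue that at $\beta>\beta_0$, the free boundary finite-volume Ising measure on $B_n$ converges weakly to the symmetric mixture $\tfrac12(\mu_++\mu_-)$ of the infinite-volume pure phases, and that $\mu_\pm$ are translation-ergodic with magnetization $\pm m^\ast$ where $m^\ast=\sqrt{q}$. The ergodic theorem then gives $m\to\pm m^\ast$ almost surely in each pure phase, so $m$ converges in law to $\tfrac12(\delta_{\sqrt q}+\delta_{-\sqrt q})$; since $|m|\le 1$, bounded convergence yields $\smallavg{m^l}\to q^{l/2}$ for every even $l$. The same reasoning applied to $\mu_n^{\otimes 2}\to\bigl(\tfrac12(\mu_++\mu_-)\bigr)^{\otimes 2}$, together with the ergodic theorem on each of the four pure product components (on which $R_{1,2}$ is a sample average of $\sigma^1_i\sigma^2_i$ with expectation equal to the product of the two pure magnetizations, namely $\pm q$), gives $R_{1,2}$ converging in law to $\tfrac12(\delta_q+\delta_{-q})$, whence $\smallavg{R_{1,2}^l}\to q^l$ for even $l$.

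The main obstacle is the weak convergence of the finite-volume free boundary measure to $\tfrac12(\mu_++\mu_-)$ at every $\beta>\beta_0$ in every dimension $d\ge 2$. In $d=2$ this is a consequence of the Aizenman--Higuchi theorem; for general $d\ge 3$ one combines the spin-flip symmetry of $\mu_n$, FKG monotonicity of the plus- and minus-boundary-conditioned states (which converge to $\mu_+$ and $\mu_-$ respectively), and the fact that any translation-invariant weak limit of $\mu_n$, being sandwiched between $\mu_-$ and $\mu_+$ in stochastic order on increasing local events, must be a symmetric convex combination of $\mu_+$ and $\mu_-$. The ergodicity of $\mu_\pm^{\otimes 2}$ under the diagonal translation action, which is what the $R_{1,2}$ step really needs, is a further technical point but follows from the exponential mixing of $\mu_\pm$ strictly below the critical temperature.
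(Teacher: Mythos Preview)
Your Cauchy--Schwarz reduction is exactly what the paper does: it reduces the statement to the two moment convergences $\smallavg{m^l}\to q^{l/2}$ and $\smallavg{R_{1,2}^l}\to q^l$ for the finite-volume free boundary Ising measure $\mu_n$ on $B_n$.

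The gap is in how you establish those moment convergences. Your ergodic-theorem argument shows that if $\sigma$ is drawn from the \emph{infinite-volume} free boundary state $\mu_\infty=\tfrac12(\mu_++\mu_-)$, then the spatial average $\frac{1}{|B_n|}\sum_{i\in B_n}\sigma_i$ converges in law to $\tfrac12(\delta_{\sqrt q}+\delta_{-\sqrt q})$, and hence $\smallavg{m^l}_\infty\to q^{l/2}$. But the quantity you need is $\smallavg{m^l}_n$, the expectation under the \emph{finite-volume} measure $\mu_n$, and weak convergence $\mu_n\to\mu_\infty$ is a statement about local (cylinder) observables, not about the global observable $m$. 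Concretely, $\smallavg{m^2}_n=|B_n|^{-2}\sum_{i,j}\smallavg{\sigma_i\sigma_j}_n$; GKS monotonicity gives $\smallavg{\sigma_i\sigma_j}_n\le\smallavg{\sigma_i\sigma_j}_\infty$, which together with $\smallavg{\sigma_0\sigma_x}_\infty\to q$ yields $\limsup\smallavg{m^2}_n\le q$. The matching lower bound, however, cannot come from GKS or from weak convergence alone: you need $\smallavg{\sigma_i\sigma_j}_n\ge q-\varepsilon$ for \emph{most} pairs $i,j\in B_n$, uniformly in $n$, including pairs that are macroscopically far apart inside $B_n$. Nothing in Bodineau's theorem or the ergodic theorem delivers this.

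The paper flags precisely this point in the paragraph following the statement of the theorem, and obtains the missing lower bound through the FK-Ising coupling and Pisztora's coarse-graining (Lemma~\ref{fin2} for two points, Lemma~\ref{four2} for four), which control connection probabilities in the finite box uniformly away from the boundary. Once $\smallavg{(m^2-q)^2}\to0$ and $\smallavg{(R_{1,2}^2-q^2)^2}\to0$ are established this way (Theorem~\ref{diffthm2}), the higher moments follow by the elementary inequality $|x^{l/2}-y^{l/2}|\le\frac{l}{2}|x-y|$ on $[-1,1]$, and then your Cauchy--Schwarz step finishes the proof.
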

Uniformity of correlations in infinite volume is a simple consequence of a result of \citet{bodineau06} (see also \cite{raoufi20}), which says that the infinite volume Gibbs measure for the Ising model under free boundary condition is the average of the infinite volume measures under plus and minus boundary conditions. 

The finite volume result stated above does not follow easily from the infinite volume result, even though we know that correlations decay exponentially under plus and minus boundary conditions \cite{duminilcopinetal20}. This is because Bodineau's theorem does not imply that the finite volume Gibbs measure under free  boundary is approximately the average of the finite volume measures under plus and minus boundary conditions. The proof presented in this draft is due to Hugo Duminil-Copin (private communication). It uses the random cluster representation of the Ising model and Pisztora's renormalization scheme~\cite{pisztora96}. A different proof was given in the original draft of this paper, which had the disadvantage of not covering the full supercritical regime and has therefore been omitted.


This completes the statements of the results. The rest of the paper is devoted to proofs.

\subsection*{Acknowledgements} 
I thank Louis-Pierre Arguin, Andrew Chen, Persi Diaconis, Hugo Duminil-Copin, Zhihan Li and Gourab Ray for many helpful comments and references. In particular, I thank Hugo for sketching the alternative proof of Theorem \ref{diffthmmain} and Gourab for helping expand the sketch to a complete argument, and the referee for explaining why the proof works for all subcritical temperatures. This work was partially supported by NSF grants DMS-2113242 and DMS-2153654.\\
\\
{\bf Data availability statement:}  Data sharing not applicable to this article as no datasets were generated or analysed during the current study.


\section{Proof of Theorem \ref{diffthmmain}}
We now present the proof of Theorem \ref{diffthmmain} due to Hugo Duminil-Copin (private communication), which uses coupling with the FK-Ising (random cluster) model.
\subsection{The FK-Ising model}
Recall that the FK-Ising model on $B_n$ is defined as follows~\cite{grimmett06}.  Let $E_n$ be the set of edges of $B_n$, as before, and let $\Omega_n := \{0,1\}^{E_n}$. Each element $\omega\in \Omega_n$ defines a graph on $B_n$, with (open) edges corresponding to those $e\in  E_n$ for which $\omega_e = 1$. Edges of $B_n$ that are not in this graph are said to be ``closed''. Let $E(\omega)$ denote the number of open edges and $k(\omega)$ denote the number of connected components of this graph. The FK-Ising model with parameter $p$, under free boundary condition, assigns a probability proportional to 
\begin{align}\label{fkform}
p^{E(\omega)} (1-p)^{|E_n|-E(\omega)} 2^{k(\omega)}
\end{align}
at each $\omega \in \Omega_n$. A different kind of boundary condition, called the ``wired boundary condition'', has an identical form of the probability mass function but with a different definition of $k(\omega)$. Under the wired boundary condition, all the boundary vertices of $B_n$ are assumed to be connected to each other, and so all connected components that touch the boundary are merged into a single component. Fixing $p$, we will denote probabilities computed under the free and wired boundary conditions by $P_n^0$ and $P_n^1$, respectively. 

It is known that the infinite volume limits of these measures exist and are equal if $p$ is not equal to its critical value (which corresponds to the critical $\beta$ in the Ising model if we reparametrize $p=1-e^{-2\beta}$)~(by \cite[Theorem 2.1]{bodineau06} and \cite[Theorem 5.3(b)]{grimmett95}); that is, for any event $A$ determined by finitely many edges, the limits $\lim_{n\to\infty} P_n^0(A)$ and $ \lim_{n\to\infty} P_n^1(A)$ exist and are equal. We will denote this limit by $P(A)$. 


Following standard convention, we will denote by $x\leftrightarrow y $ the event that two vertices $x$ and $y$ are connected by a path of open edges. Similarly, $x\leftrightarrow \partial B_n$ will denote the event that $x$ is connected by a path to the boundary of $B_n$, and $x\leftrightarrow \infty$ will denote the event that $x$ belongs to an infinite open cluster. It is known that when $p$ is greater than the critical value, the infinite volume FK-Ising model has a unique infinite open cluster with probability one~\cite[Theorem 2]{burtonkeane89} (see also \cite[Theorem 1.10]{duminil-copin17}). In the following, we will assume throughout that $p$ is greater than the critical value. 


Lastly, define
\begin{align}\label{qdefinition}
q := \lim_{n\to\infty} (P(0\leftrightarrow \partial B_n))^2,
\end{align}
where the existence of the limit follows from monotonicity of the probability as a function of $n$. We will hold this $q$ fixed throughout the remaining discussion. Note that
\begin{align}\label{newq}
P(0\leftrightarrow \infty) &= P(0 \leftrightarrow \partial B_n \text{ for all } n)\notag \\
&= \lim_{n\to\infty} P(0\leftrightarrow \partial B_n) = \sqrt{q}. 
\end{align}
The numbers $p$ and $q$ will remain fixed throughout the remainder of this section, unless otherwise mentioned. 

\subsection{Uniformity of connectivities in infinite volume}
The identity \eqref{newq} leads to the following lemma, which shows that $P(0\leftrightarrow x) \approx q$ whenever $|x|$ is large. 
\begin{lmm}\label{altlmm1}
For any $x$, $P(0\leftrightarrow x) \ge q$, and given any $\ve >0$, there exists $C$ depending on $\ve$ such that whenever $|x|_\infty>C$ (where $|x|_\infty$ denotes the $\ell^\infty$ norm of $x$), we have $P(0\leftrightarrow x) \le q + \ve$.
\end{lmm}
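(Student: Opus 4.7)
The plan is to decompose $P(0\leftrightarrow x)$ according to whether the cluster of $0$ is infinite, and to combine the uniqueness of the supercritical infinite cluster with a spatial mixing property of the unique infinite volume FK-Ising measure. For the lower bound, I would observe that since the supercritical infinite cluster is almost surely unique, on the event $\{0\leftrightarrow\infty\}\cap\{x\leftrightarrow\infty\}$ both $0$ and $x$ lie in this cluster, hence $0\leftrightarrow x$. The FKG inequality together with translation invariance and \eqref{newq} then give
\eq{P(0\leftrightarrow x)\ge P(\{0\leftrightarrow\infty\}\cap\{x\leftrightarrow\infty\})\ge P(0\leftrightarrow\infty)\,P(x\leftrightarrow\infty) = q.}

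For the upper bound I would write $P(0\leftrightarrow x) = P(0\leftrightarrow x,\, 0\leftrightarrow\infty) + P(0\leftrightarrow x,\, 0\not\leftrightarrow\infty)$ and handle the two pieces separately. Let $C(0)$ denote the open cluster of $0$. On the second event, $C(0)$ is finite and contains both $0$ and $x$, so $\diam C(0)\ge |x|_\infty$; since $\{\diam C(0)\ge n\}\downarrow\{|C(0)|=\infty\}$ as $n\to\infty$, continuity of measure forces this term to zero. On the first event, $0\leftrightarrow x$ together with $0\leftrightarrow\infty$ forces $x\leftrightarrow\infty$ (again by cluster uniqueness), so from $\{0\leftrightarrow\infty\}\subseteq\{0\leftrightarrow\partial B_N\}$ and its translate based at $x$, for any fixed $N$ I obtain
\eq{P(0\leftrightarrow x,\, 0\leftrightarrow\infty) \le P(0\leftrightarrow\partial B_N,\, x\leftrightarrow x+\partial B_N).}

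The crucial step will be to show that for each fixed $N$, the right-hand side above converges to $P(0\leftrightarrow\partial B_N)^2$ as $|x|_\infty\to\infty$. This is spatial mixing of the unique infinite volume FK-Ising measure applied to the local event $\{0\leftrightarrow\partial B_N\}$ and its translate, and is a standard consequence of uniqueness of the infinite volume state in the supercritical regime \cite{grimmett06}. Given $\ve>0$, I would first choose $N$ so that $P(0\leftrightarrow\partial B_N)^2\le q+\ve/3$ (possible by \eqref{qdefinition}), and then take $C$ large enough that for $|x|_\infty>C$ both the mixing error and the $\diam C(0)$ term are at most $\ve/3$; summing gives $P(0\leftrightarrow x)\le q+\ve$. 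I expect the mixing input to be the main conceptual hurdle, though it is well-known; a hands-on alternative would be to invoke exponential decay of truncated two-point functions in the supercritical FK-Ising model to bypass mixing entirely.
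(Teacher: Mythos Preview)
Your lower bound is identical to the paper's.

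For the upper bound the two arguments diverge. You decompose according to $\{0\leftrightarrow\infty\}$ and then invoke spatial mixing of the unique infinite-volume FK-Ising measure as a black box to obtain $P(0\leftrightarrow\partial B_N,\,x\leftrightarrow x+\partial B_N)\to P(0\leftrightarrow\partial B_N)^2$. This is legitimate: the paper has already recorded that $P_n^0$ and $P_n^1$ have the same infinite-volume limit at non-critical $p$, and uniqueness of the DLR state implies tail triviality and hence mixing (this is indeed in \cite{grimmett06}). One small remark: the implication ``$0\leftrightarrow x$ and $0\leftrightarrow\infty$ force $x\leftrightarrow\infty$'' is just transitivity of connection, not cluster uniqueness.

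The paper instead proves the needed decorrelation by hand: for $|x|_\infty$ large it places disjoint boxes $B_k$ and $B_k(x)$ around $0$ and $x$, lets $F$ be the event that a fixed finite collection of edges outside these boxes is open (chosen so that $F$ wires both boundaries), and uses FKG to get $P(0\leftrightarrow\partial B_l,\,x\leftrightarrow\partial B_l(x))\le P(\cdot\mid F)=(P_k^1(0\leftrightarrow\partial B_l))^2$, then sends $k\to\infty$ and $l\to\infty$. This is essentially the standard proof of mixing for random-cluster measures specialized to the events at hand, using only FKG and the domain Markov property. Your route is shorter and conceptually clean but imports a stronger off-the-shelf result; the paper's route is more self-contained and stays within the elementary toolkit already in use. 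Both are correct.
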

\begin{proof}
Take any $x$. By the uniqueness of the infinite open cluster, the event $0\leftrightarrow x$ is implied by the events that $0\leftrightarrow \infty$ and $x\leftrightarrow \infty$ (with probability one). By the FKG property and the identity \eqref{newq}, this implies that
\begin{align*}
P(0\leftrightarrow x) &\ge P(0\leftrightarrow \infty, \, x\leftrightarrow\infty)\\
&\ge  P(0\leftrightarrow \infty)P(x\leftrightarrow\infty) = q.
\end{align*}
This completes the proof of the lower bound.  Next, for each $n$, let $B_n(x)$ denote the cube $B_n$ shifted by $x$, that is, the set $x+B_n$. Let $\partial B_n(x)$ denote the boundary of $B_n(x)$. Take any $x\ne 0$ and  $k< \frac{1}{2}|x|_\infty-1$. Then the cubes $\partial B_k$ and $\partial B_k(x)$ are disjoint. Moreover, there is a finite set $S$ of edges in $\Z^d$ that are not edges of $B_k$ or $B_k(x)$, such that 
\begin{itemize}
\item  if the edges in $S$ are all open, then all vertices of $\partial B_k$ and $\partial B_k(x)$ are in the same connected component, and 
\item every edge that is incident to a vertex in $\partial B_k \cup \partial B_k(x)$ but is not an edge of $B_k$ or $B_k(x)$, is a member of $S$.
\end{itemize}
Let $F$ denote the event that all edges in $S$ are open. Conditional on $F$, the configurations of open edges in $B_k$ and $B_k(x)$ are independent, and follow the random cluster models on these cubes with wired boundary condition. Take any $l<k$, and let $E$ be the event $\{0\leftrightarrow \partial B_l\}\cap\{ x \leftrightarrow \partial B_l(x)\}$. Since $E$ and $F$ are increasing events and $P(F) >0$, the FKG property implies that $P(E|F) \ge P(E)$. Consequently, 
\begin{align*}
P(0\leftrightarrow x) &\le P(E) \le P(E|F)\\
&= P(0\leftrightarrow \partial B_l, \, x \leftrightarrow \partial B_l(x) | F) \\
&= P_k^1(0\leftrightarrow \partial B_l) P_k^1(x\leftrightarrow \partial B_l(x))\\
&= (P_k^1(0\leftrightarrow \partial B_l))^2. 
\end{align*}
Take any $\ve >0$. For fixed $l$, if $k$ is large enough, then 
\[
(P_k^1(0\leftrightarrow \partial B_l))^2 \le( P(0\leftrightarrow \partial B_l))^2 + \frac{\ve}{2}.
\]
But if $l$ is large enough, then $(P(0\leftrightarrow \partial B_l))^2 \le q + \frac{\ve}{2}$. Thus, if $|x|$ is large enough, then we can choose $l$ and $k$ so that both inequalities are satisfied. This proves the claimed upper bound.
\end{proof}

\subsection{Uniformity of two-point correlations in finite volume}
Our next goal, roughly speaking, is to show that the conclusion of Lemma \ref{altlmm1} holds even if we consider the model restricted to a cube, as long as $0$ and $x$ are not too close to the boundary of the cube. The following lemma provides the upper bound.
\begin{lmm}\label{fin1}
Given any $\ve >0$ and $n$, there is some $k>0$ depending only on $d$ and $\ve$ (and not on $n$), such that whenever $x,y\in B_n$ and $|x-y|_\infty>k$, we have $P^0_n(x\leftrightarrow y) \le q + \ve$.
\end{lmm}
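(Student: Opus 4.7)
The plan is to reduce the finite-volume statement to the infinite-volume statement of Lemma \ref{altlmm1} via stochastic domination. Specifically, I would first recall the standard monotonicity property of the FK-Ising model with free boundary condition: viewing $P_n^0$ as a measure on $\{0,1\}^{E(\Z^d)}$ by declaring all edges outside $E_n$ to be deterministically closed, one has $P_n^0 \le_{st} P_N^0$ for every $N \ge n$, and hence $P_n^0 \le_{st} P$. (The first inequality is obtained by noting that $P_n^0$ equals $P_N^0$ conditioned on the decreasing event that all edges in $E_N \setminus E_n$ are closed; the normalizing factor involving the isolated vertices of $B_N\setminus B_n$ cancels against $2^{|B_N\setminus B_n|}$.)

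Next, I would observe that the event $\{x \leftrightarrow y \text{ via a path in } B_n\}$ is an increasing event on $\{0,1\}^{E(\Z^d)}$, so stochastic domination gives
\begin{align*}
P_n^0(x \leftrightarrow y) &\le P(x \leftrightarrow y \text{ in } B_n) \le P(x \leftrightarrow y),
\end{align*}
where the second inequality is trivial since any path in $B_n$ is also a path in $\Z^d$. By translation invariance of the infinite-volume measure $P$, one has $P(x \leftrightarrow y) = P(0 \leftrightarrow y-x)$.

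Finally, I would apply Lemma \ref{altlmm1}: given $\ve>0$, choose $k$ to be the constant $C$ furnished by that lemma. Then whenever $|x-y|_\infty = |y-x|_\infty > k$, we get $P(0 \leftrightarrow y-x) \le q + \ve$, and combining with the chain above yields $P_n^0(x\leftrightarrow y)\le q+\ve$, as required.

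I do not expect any real obstacle here; the substance of the argument lies in Lemma \ref{altlmm1} (which already uses uniqueness of the infinite cluster and the FKG inequality), and Lemma \ref{fin1} is essentially a routine transfer of this bound from infinite to finite volume. The only point requiring care is the observation that under free boundary condition the finite-volume measure lies \emph{below} the infinite-volume one in the stochastic order, which is exactly what makes the upper bound survive the passage to finite volume. Note that the analogous wired-boundary statement would fail in this direction.
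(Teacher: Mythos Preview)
Your proposal is correct and follows essentially the same route as the paper: use FKG-based monotonicity in the volume to get $P_n^0(x\leftrightarrow y)\le P(x\leftrightarrow y)$, then invoke translation invariance of $P$ and Lemma~\ref{altlmm1}. The paper is slightly terser (it just says ``$P_n^0(x\leftrightarrow y)$ is an increasing function of $n$'' as a consequence of FKG), but your more explicit justification via stochastic domination is the same mechanism.
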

\begin{proof}
It is a simple consequence of the FKG property that $P_n^0(x\leftrightarrow y)$ is an increasing function of $n$. As a result, we have 
\[
P_n^0(x\leftrightarrow y) \le P(x\leftrightarrow y)
\]
for any $x,y\in B_n$. But by Lemma \ref{altlmm1} and the translation-invariance of the infinite volume measure, there is some $k$ depending only on $d$ and $\ve$ such that $P(x\leftrightarrow y)\le q+\ve$ whenever $|x-y|_\infty>k$. This completes the proof of the lemma.
\end{proof}

The lower bound in a finite cube is more complicated. It requires a version of the so-called ``Pisztora renormalization argument''~\cite{pisztora96}, due to \citet*{duminilcopinetal20}. First, recall the definition of the FK-Ising model on $B_n$ under arbitrary boundary condition. A general boundary condition $\xi$ refers to a partition of the set of boundary vertices of $B_n$, where we think of all vertices within the same member of the partition as being connected, when defining $k(\omega)$ in \eqref{fkform}. So, for example, $\xi$ consists of only singletons for the free boundary condition, and $\xi$ consists of only the full set $\partial B_n$ for the wired boundary condition. Let $P_k^\xi$ denote the model on $B_k$ under boundary condition $\xi$. For a given realization of the model on $B_n$, we say that a ``block'' $B_k(x)\subseteq B_n$ is ``good'' if $x\in k \Z^d$, and the following hold:
\begin{itemize}
\item There is an open cluster in $B_k(x)$ that touches all faces of $B_k(x)$.
\item Any open path in $B_k(x)$ of length $k$ is contained in this cluster.
\end{itemize}
We will frequently refer to the above open cluster as the ``giant open cluster'' of $B_k(x)$. Two blocks $B_k(x)$ and $B_k(y)$ are said to be neighbors if $x$ and $y$ are neighbors in $k\Z^d$. Note that two neighboring blocks have a substantial overlap. In particular, if two neighboring blocks are both good (in a realization of the model), then the conditions imply that the large clusters in the two blocks also intersect. In this situation, we say that the two blocks are ``connected''.

The result of \citet{duminilcopinetal20} that we are going to use is that for any $k$, any boundary condition $\xi$ on $B_{2k}$ and any $p$ greater than the critical value, 
\begin{align}\label{pisztora}
P_{2k}^\xi(B_k \text{ is good}) \ge 1- e^{-ck},
\end{align}
where $c$ depends only on $p$ and $d$. This was proved for $d\ge 3$ in \cite[Equation (3.1)]{duminilcopinetal20} (see also \cite[Theorem 2.1]{bodineau05}). For $d=2$, it follows from the RSW estimate in \citet*{duminil-copinetal11}. A consequence of this is the following lemma.
\begin{lmm}\label{oldlmm}
Given any $\ve >0$, the following is true for all large enough even $k$ (with the threshold depending only on $d$ and $\ve$). Suppose that $B_k(x)$ and $B_k(y)$ are both contained in $B_n$, and are disjoint. Then, under $P_n^0$,  the probability that any open path of size $\frac{k}{2}$ in $B_k(x)$ is connected to any open path of size $\frac{k}{2}$ in $B_k(y)$ is at least $1-\ve$.
\end{lmm}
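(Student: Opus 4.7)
The plan is to apply the Pisztora renormalization \eqref{pisztora} at the sub-scale $k/2$, recast the event ``$B_{k/2}(w)$ is good'' as a site process on the rescaled lattice $(k/4)\Z^d$, use the Liggett--Schonmann--Stacey (LSS) theorem to dominate it by a highly supercritical i.i.d.\ Bernoulli site percolation, and then merge the giant micro-clusters of all relevant good sub-blocks into a single FK cluster of $B_n$ containing every open path of length $k/2$ in $B_k(x) \cup B_k(y)$.

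Concretely, any open self-avoiding path $P$ of length $k/2$ in $B_k(x)$ has diameter at most $k/2$, so its bounding cube has side at most $k/2+1$ and fits inside some sub-block $B_{k/2}(w)$ (side $k+1$) with $w$ in the $(k/4)\Z^d$-lattice within $\ell^\infty$-distance $O(k)$ of $x$; the number of $w$'s that ever arise this way for paths in $B_k(x) \cup B_k(y)$ is a constant $C_d$ depending only on $d$. Conditioning on the configuration outside and applying \eqref{pisztora} gives $P_n^0(B_{k/2}(w) \text{ is good}) \ge 1 - e^{-ck/2}$, and a union bound over the $C_d$ relevant sub-blocks makes all of them good with probability at least $1 - C_d\, e^{-ck/2}$.

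Next, the goodness process on $(k/4)\Z^d$ is at most $4$-dependent, since sub-blocks whose centers are at rescaled distance exceeding $4$ are edge-disjoint. LSS then yields stochastic domination by i.i.d.\ Bernoulli site percolation with parameter $p_k \to 1$ as $k \to \infty$. For $k$ large, $1-p_k$ is deeply subcritical for bond percolation on $\Z^d$, so a Peierls/contour argument shows the complement of the macroscopic giant cluster consists of small dual contours whose probability of surrounding any specified site is bounded uniformly in the ambient box; in particular, any two specified sites of $(k/4)\Z^d \cap B_n$ lie in the same component of good sub-blocks with probability at least $1 - \eta(k)$, where $\eta(k) \to 0$ as $k \to \infty$ uniformly in $n$. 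Union-bounding over the $O(1)$ pairs of sub-blocks relevant to $B_k(x)$ and $B_k(y)$ gives the good event ``all relevant sub-blocks are good and share one component of good sub-blocks'' with probability at least $1-\varepsilon$ for $k$ large. On this event, any path $P_1$ of length $k/2$ in $B_k(x)$ sits in a good sub-block and so belongs to its giant micro-cluster by the defining property of goodness, and similarly for $P_2$ in $B_k(y)$; adjacent good sub-blocks share a thick overlap slab containing open paths of length $k/2$ that lie in both micro-giants, so walking along the macro-path of good sub-blocks identifies these micro-giants into one FK cluster of $B_n$ containing both $P_1$ and $P_2$.

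The principal obstacle I expect is the uniform-in-$n$ version of the percolation estimate in the previous paragraph: transferring the classical $\Z^d$ statement ``two specified sites are in the same infinite cluster with probability $\ge \theta(p_k)^2 \to 1$'' to the analogous bound on the finite macro-box $(k/4)\Z^d \cap B_n$ without introducing $n$-dependence. This is handled by leveraging that $1-p_k$ is deeply subcritical for $k$ large: every site outside the macro-giant is enclosed by a small dual contour of closed edges, and the probability of such a contour at a specified site admits a Peierls-type bound independent of $n$, which yields the required uniform connectivity. A secondary technical wrinkle is that a few sub-blocks $B_{k/2}(w)$ relevant to paths near $\partial B_n$ may have their outer Pisztora box extending outside $B_n$; this can be patched either by passing to a slightly smaller sub-block scale near the boundary or by invoking the boundary-condition-uniform form of \eqref{pisztora}.
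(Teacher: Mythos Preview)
Your approach is essentially the same as the paper's: both work with sub-blocks at scale $k/2$, invoke the Pisztora estimate \eqref{pisztora} to make all relevant sub-blocks good with high probability, apply the Liggett--Schonmann--Stacey domination to the renormalized site process, and then argue that the giant micro-clusters of the good sub-blocks merge into a single FK cluster containing every long open path in $B_k(x)\cup B_k(y)$. The only cosmetic differences are that you place block centers on the finer lattice $(k/4)\Z^d$ and fit the entire path into one sub-block (using that its $\ell^\infty$-diameter is at most $k/2$), whereas the paper places centers on $(k/2)\Z^d$ and instead traps the \emph{initial segment} of the path in a well-centered sub-block; and you spell out the Peierls side of the connectivity step that the paper delegates to \cite[Lemma~3.1]{duminilcopinetal20}.

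One small inaccuracy worth fixing: the goodness process on the rescaled lattice is not literally $4$-dependent, because FK edges in disjoint boxes are not independent. What you actually use (and what the paper uses) is that \eqref{pisztora} bounds the conditional probability of goodness \emph{uniformly over the exterior configuration}; this is what feeds into the LSS-type domination, not finite-range dependence per se.
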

\begin{proof}
Let $P$ be an open path of length $\frac{k}{2}$ in $B_k(x)$. Consider blocks of the form $B_l(z)$, $z\in l\Z^d$, where $l=\frac{k}{2}$. Let $a$ be the starting point of $P$. By the nature of the blocks, there is at least one block $D$ such that $a\in D$ and the $\ell^1$ distance of $a$ from $\partial D$ is at least $\frac{ld}{2}$. (For example, if $a= (a_1,\ldots,a_d)$, then we can choose $D= B_l(z)$ where $z_i$ is the integer multiple of $l$ that is closest to $a_i$, so that $|(z_i \pm l)-a_i|\ge \frac{l}{2}$.) Then the part of $P$ starting from $a$ and continuing until the first time $P$ hits $\partial D$, has length at least $\frac{ld}{2}\ge l$ (because $d\ge 2$). Thus, if $D$ is a good block, then this part of $P$ lies within the giant open cluster of $P$ as in the definition of good block above.

If $k$ is chosen large enough (depending on $d$ and $\ve$), then \eqref{pisztora} shows that with probability at least $1-\frac{\ve}{2}$, all blocks intersecting $B_k(x)$ or $B_k(y)$ are good. On the other hand, as argued in the proof of \cite[Lemma 3.1]{duminilcopinetal20} with the help of the main result of \cite{liggettetal97}, the collection of good blocks forms a finitely dependent percolation process on $l\Z^d\cap B_n$, which dominates an i.i.d.~percolation process with parameter $q$, where $q$ can be made as close to $1$ as we want by choosing $k$ large enough. Consequently, by choosing $k$ large enough we can guarantee that with probability at least $1-\frac{\ve}{2}$, the giant open cluster of any good block intersecting $B_k(x)$ is connected to the giant open cluster of any good block intersecting $B_k(y)$. Combining this with our previous deductions, we get that with probability at least $1-\ve$, any open path of size $\frac{k}{2}$ in $B_k(x)$ is connected to any open path of size $\frac{k}{2}$ in $B_k(y)$.
\end{proof}
We are now ready to prove the lower bound.
\begin{lmm}\label{fin2}
Given any $n$ and $\ve >0$, there exist positive integers $k,l$ depending only on $d$ and $\ve$ (and not on $n$) such that whenever $x,y\in B_n$, $|x-y|_\infty\ge 2k$, and $x,y$ are at an $\ell^\infty$ distance at least $l$ from $\partial B_n$, we have $P_n^0(x\leftrightarrow y) \ge q-\ve$.
\end{lmm}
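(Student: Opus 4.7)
The plan is to lower-bound $P_n^0(x \leftrightarrow y)$ by exhibiting three increasing events whose intersection forces the connection and whose individual probabilities are well controlled. I would take $A = \{x \leftrightarrow \partial B_{k/2}(x)\}$ and $B = \{y \leftrightarrow \partial B_{k/2}(y)\}$ for an even parameter $k$ to be chosen later, and let $C_*$ denote the event that every Pisztora sub-block $B_{k/2}(z)$ with $z \in (k/2)\Z^d$ intersecting $B_k(x) \cup B_k(y)$ is good, \emph{and} that the giant clusters of all these good blocks are merged into a single open cluster in $B_n$. Both conditions defining $C_*$ are monotone in the edge configuration, so $C_*$ is increasing. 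On $A$ the witnessing open path from $x$ has length at least $k/2$ and lies in $B_{k/2}(x) \subseteq B_k(x)$; arguing exactly as in the proof of Lemma \ref{oldlmm}, this path fully enters some sub-block and hence, on $C_*$, joins that sub-block's giant cluster. The same holds for $y$ on $B$, so $A \cap B \cap C_* \subseteq \{x \leftrightarrow y\}$.

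The probability $P_n^0(C_*)$ can be made at least $1 - \ve/2$ by choosing $k$ large enough depending only on $d$ and $\ve$, thanks to Pisztora's estimate \eqref{pisztora} and the finitely dependent stochastic domination argument already used in Lemma \ref{oldlmm}. The more delicate step is bounding $P_n^0(A)$ (and, by symmetry, $P_n^0(B)$) from below by approximately $\sqrt{q}$. For this I would use FK monotonicity in the boundary condition applied \emph{inwards}: if $B_L(x) \subseteq B_n$ with $L \ge k/2$, then conditioning on edges outside $B_L(x)$ leaves, inside $B_L(x)$, an FK measure whose boundary condition on $\partial B_L(x)$ is always at least free. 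Hence $P_n^0$ restricted to edges inside $B_L(x)$ stochastically dominates $P_L^0$ on increasing events, and by translation invariance
\[
P_n^0(x \leftrightarrow \partial B_{k/2}(x)) \ge P_L^0(0 \leftrightarrow \partial B_{k/2}).
\]

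The right-hand side can then be made arbitrarily close to $\sqrt{q}$ by chaining two limits: $P_L^0 \nearrow P$ stochastically as $L \to \infty$, so $P_L^0(0 \leftrightarrow \partial B_{k/2}) \to P(0 \leftrightarrow \partial B_{k/2})$; and by \eqref{newq}, $P(0 \leftrightarrow \partial B_{k/2}) \to P(0 \leftrightarrow \infty) = \sqrt{q}$ as $k \to \infty$. Picking $\ve' > 0$ with $(\sqrt{q} - \ve')^2(1 - \ve/2) \ge q - \ve$, then $k$ large enough that both $P_n^0(C_*) \ge 1 - \ve/2$ and $P(0 \leftrightarrow \partial B_{k/2}) \ge \sqrt{q} - \ve'/2$ hold, and finally $l$ large enough (depending on $k$) for $P_l^0(0 \leftrightarrow \partial B_{k/2}) \ge \sqrt{q} - \ve'$, one obtains $P_n^0(A), P_n^0(B) \ge \sqrt{q} - \ve'$ whenever $x, y$ lie at $\ell^\infty$ distance at least $l$ from $\partial B_n$. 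Since $A, B, C_*$ are all increasing, FKG concludes
\[
P_n^0(x \leftrightarrow y) \ge P_n^0(A)\,P_n^0(B)\,P_n^0(C_*) \ge (\sqrt{q} - \ve')^2(1 - \ve/2) \ge q - \ve.
\]

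The main obstacle is that free is the \emph{smallest} FK boundary condition, so $P_n^0 \le P$ on increasing events and the naive stochastic comparison only yields upper bounds (as in Lemma \ref{fin1}), never the lower bound one wants here. The inward monotonicity observation---that $P_n^0$ restricted to a smaller sub-box \emph{dominates} the free measure on that sub-box, since any induced boundary condition is at least as wired as free---is precisely what reverses the direction of the inequality and allows the infinite-volume identity $P(0 \leftrightarrow \infty) = \sqrt{q}$ to be transferred into a genuine finite-volume lower bound.
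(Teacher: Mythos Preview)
Your overall strategy---lower-bounding $P_n^0(x\leftrightarrow y)$ using the local connection events $A,B$ together with a ``long-range glue'' event coming from Pisztora renormalization---is sound, and your treatment of $P_n^0(A)$ and $P_n^0(B)$ via inward monotonicity (dominating the free measure on a sub-box $B_l(x)$) is exactly the mechanism the paper uses. The gap is in the monotonicity of $C_*$.

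The Pisztora ``good block'' event, as defined in the paper, requires that \emph{every} open path of length $k/2$ in the block be contained in the spanning cluster. This is not an increasing event: starting from a good configuration, opening one extra edge can extend an isolated open path of length $k/2-1$ to length $k/2$ without connecting it to the spanning cluster, destroying goodness. Consequently your event $C_*$ is not increasing, and the FKG step
\[
P_n^0(A\cap B\cap C_*)\ \ge\ P_n^0(A)\,P_n^0(B)\,P_n^0(C_*)
\]
is not justified. The paper avoids this issue by not putting the renormalization input into an FKG product at all. Instead it writes
\[
P_n^0(x\leftrightarrow y)\ \ge\ P_n^0\bigl(x\leftrightarrow\partial B_k(x),\,y\leftrightarrow\partial B_k(y)\bigr)\ -\ P_n^0(E),
\]
where $E$ is the (non-monotone) event that some annular cluster around $x$ fails to connect to some annular cluster around $y$; FKG is applied only to the two genuinely increasing connection events, while $P_n^0(E)$ is controlled separately by Lemma~\ref{oldlmm}. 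Replacing your multiplicative FKG bound by this additive decomposition fixes the argument with essentially no change to the rest of your proof.
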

\begin{proof}
Take any $n$, $k$, $l$, $x$ and $y$ as in the statement of the theorem, where $k$ and $l$ will be chosen later. We will choose $k$ to be even and $k<l<n$. Let $E$ be the event that there is an open cluster $C_x$ in $B_k(x)\setminus B_{\frac{k}{2}}(x)$ connecting $\partial B_k(x)$ and $\partial B_{\frac{k}{2}}(x)$, and an open cluster $C_y$ in $B_k(y)\setminus B_{\frac{k}{2}}(y)$ connecting  $\partial B_k(y)$ and $\partial B_{\frac{k}{2}}(y)$, such that $C_x\not \leftrightarrow C_y$ in $B_n$. Note that if $x\leftrightarrow \partial B_k(x)$, $y\leftrightarrow \partial B_k(y)$, and $E$ fails to happen, then the open clusters connecting $x$ to $\partial B_k(x)$ and $y$ to $\partial B_k(y)$ must be connected in $B_n$, and therefore $x\leftrightarrow y$. Thus,  by the FKG inequality,
\begin{align}\label{pninequality}
P_n^0(x\leftrightarrow y) &\ge P_n^0(x\leftrightarrow \partial B_k(x), \, y\leftrightarrow \partial B_k(y)) - P_n^0(E)\notag \\
&\ge P_n^0(x\leftrightarrow \partial B_k(x))P_n^0( y\leftrightarrow \partial B_k(y)) - P_n^0(E).
\end{align}
Let $Q_x$ denote the  FK-Ising model on $B_l(x)$ under free boundary condition. Since $k<l<n$ and $B_l(x) \subseteq B_n$, the FKG property implies that 
\[
P_n^0(x\leftrightarrow \partial B_k(x)) \ge Q_x(x\leftrightarrow \partial B_k(x)) = P_l^0(0\leftrightarrow \partial B_k).
\]
Similarly, 
\[
P_n^0(y\leftrightarrow \partial B_k(y))  \ge P_l^0(0\leftrightarrow \partial B_k). 
\]
By the definition of $q$, we can choose $k$ large enough (depending on $d$ and $\ve$) such that 
\[
P(0\leftrightarrow \partial B_k) \ge \sqrt{q}-\frac{\ve}{8}.
\]
Having chosen $k$ like this, we can then use the definition of the infinite volume measure to find $l$ large enough (depending on $d$, $\ve$ and $k$) such that
\[
P_l^0(0\leftrightarrow \partial B_k) \ge P(0\leftrightarrow \partial B_k) -\frac{\ve}{8}.
\]
Thus, with such choices of $k$ and $l$, we get that $P_n^0(x\leftrightarrow \partial B_k(x))$ and $P_n^0(y\leftrightarrow \partial B_k(y))$ are both bounded below by $\sqrt{q} - \frac{\ve}{4}$. Plugging this into \eqref{pninequality}, we get
\[
P_n^0(x\leftrightarrow y) \ge q - \frac{\ve}{2} - P_n^0(E). 
\]
Finally, with a large enough choice of $k$ (depending on $d$ and $\ve$), Lemma \ref{oldlmm} implies that $P_n^0(E)<\frac{\ve}{2}$, which completes the proof.  
\end{proof}

Using Lemmas \ref{fin1} and \ref{fin2}, and a standard coupling of the Ising and FK-Ising models, we are now ready to state and prove the following theorem, which is the main result of this subsection.
\begin{thm}\label{diffthm}
Let $\beta_0$ be the critical temperature of the ordinary Ising model in dimension $d$. Take any $\beta> \beta_0$ and let $p:=1-e^{-2\beta}$, so that $p$ is a supercritical probability for the FK-Ising model. Let $q$ be defined as in equation~\eqref{qdefinition}. Take any $\ve \in (0,1)$. For each $n$, let 
\[
\delta_n = \delta_n(\beta,d,\ve):= \max\{|\smallavg{\sigma_i\sigma_j} - q|: i,j\in B_{\lfloor (1-\ve)n\rfloor}, \, |i-j|_1\ge \ve n\},
\]
where $\smallavg{\cdot}$ denotes averaging with respect to the Ising model on $B_n$ at inverse temperature $\beta$ and free boundary condition. Then $\delta_n \to 0$ as $n\to \infty$.
\end{thm}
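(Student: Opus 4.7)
Given Lemmas \ref{fin1} and \ref{fin2}, the proof of Theorem \ref{diffthm} is essentially a packaging step. The plan is first to reduce the statement about Ising two-point correlations to a statement about FK-Ising connectivities via the Edwards--Sokal coupling, and then to combine the upper and lower bounds just established. Specifically, the Edwards--Sokal coupling at parameter $p=1-e^{-2\beta}$ under free boundary condition yields the identity $\smallavg{\sigma_i\sigma_j} = P_n^0(i\leftrightarrow j)$ for every $i,j\in B_n$: sample $\omega$ from $P_n^0$ and then assign i.i.d.\ uniform $\pm 1$ labels to the open clusters of $\omega$; the product $\sigma_i\sigma_j$ equals $+1$ when $i\leftrightarrow j$ and is uniform on $\{\pm 1\}$ otherwise. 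Thus proving $\delta_n\to 0$ reduces to showing that $P_n^0(i\leftrightarrow j)\to q$ uniformly over the pairs $(i,j)$ appearing in the definition of $\delta_n$.

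Fix an arbitrary $\eta>0$. Lemma \ref{fin1} furnishes a threshold $k_1=k_1(\eta,d)$ such that $P_n^0(i\leftrightarrow j)\le q+\eta$ whenever $|i-j|_\infty>k_1$, and Lemma \ref{fin2} furnishes thresholds $k_2=k_2(\eta,d)$ and $l=l(\eta,d)$ such that $P_n^0(i\leftrightarrow j)\ge q-\eta$ whenever $|i-j|_\infty\ge 2k_2$ and both $i,j$ lie at $\ell^\infty$ distance at least $l$ from $\partial B_n$. Crucially, none of $k_1,k_2,l$ depends on $n$. The geometric conditions built into $\delta_n$ therefore need only be matched against these fixed thresholds: the constraint $i,j\in B_{\lfloor(1-\ve)n\rfloor}$ places both points at $\ell^\infty$ distance at least $\ve n-1$ from $\partial B_n$, while $|i-j|_1\ge \ve n$ forces $|i-j|_\infty \ge \ve n/d$. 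Both quantities grow linearly in $n$, so for $n$ large enough (depending on $\eta$, $\ve$ and $d$) every admissible pair $(i,j)$ satisfies both hypotheses simultaneously, yielding $|\smallavg{\sigma_i\sigma_j}-q|\le \eta$ uniformly. Hence $\limsup_{n\to\infty}\delta_n\le \eta$, and letting $\eta\downarrow 0$ closes the argument.

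Because the substantive work has already been carried out in Lemmas \ref{altlmm1}, \ref{fin1}, \ref{fin2}, and \ref{oldlmm} (via uniqueness of the infinite open cluster, the FKG inequality, and the Pisztora-type renormalization estimate \eqref{pisztora}), no real obstacle remains at this stage. The only mild points to keep track of are that one must apply both lemmas with the \emph{same} $\eta$ (taking $k:=\max(k_1,k_2)$ if one prefers a single threshold), that the thresholds $k_1,k_2,l$ are confirmed to be independent of $n$ before being compared with the linearly growing quantities $\ve n$ and $\ve n/d$, and that the Edwards--Sokal coupling is available precisely because the boundary condition is free on both sides (so that the free FK-Ising boundary corresponds to the free Ising boundary).
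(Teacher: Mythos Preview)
Your proposal is correct and follows exactly the approach the paper takes: the paper's own proof simply cites the Edwards--Sokal identity $\smallavg{\sigma_i\sigma_j}=P_n^0(i\leftrightarrow j)$ and then says that combining it with Lemmas~\ref{fin1} and~\ref{fin2} makes the result ``straightforward.'' Your write-up is just a careful unpacking of that one sentence, checking that the geometric constraints $i,j\in B_{\lfloor(1-\ve)n\rfloor}$ and $|i-j|_1\ge \ve n$ eventually dominate the fixed thresholds $k_1$, $2k_2$, and $l$ from the two lemmas.
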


\begin{proof}
Take any $n$. Let $P_n^0(\cdot)$ denote probability computed under the FK-Ising model with parameter $p$ on $B_n$ under free boundary condition. It is a standard fact~\cite[Theorem 1.16]{grimmett06} that for any $i,j\in B_n$, 
\begin{align}\label{twoptid}
\smallavg{\sigma_i\sigma_j} = P_n^0(i\leftrightarrow j).
\end{align}
Using this identity and Lemmas \ref{fin1} and \ref{fin2},  it is now straightforward to prove Theorem \ref{diffthm}. 
\end{proof}


\subsection{Uniformity of four-point correlations in finite volume}
In this subsection, we show that for most quadruples of vertices $i,j,k,l\in B_n$, $\smallavg{\sigma_i\sigma_j\sigma_k\sigma_l} \approx q^2$ if $n$ is large, where the expectation is with respect to the Ising model with free boundary condition on $B_n$ at a supercritical inverse temperature $\beta$, and $q$ is as in \eqref{qdefinition}. First, we need the following analogue of Lemma \ref{fin1} for four-point connectivities. Let $p$ be as in Theorem \ref{diffthm}.
\begin{lmm}\label{four1}
Given any $\ve >0$ and $n$, there is some $k>0$ depending only on $d$ and $\ve$ (and not on $n$), such that if $x,y,w,z\in B_n$ are such that all interpoint $\ell^\infty$ distances are greater than $2k$, and all four points are at least at an $\ell^\infty$ distance $k$ from the boundary, then we have 
\begin{align*}
P_n^0(x\leftrightarrow y \leftrightarrow w \leftrightarrow z) \le q^2 + \ve, \ \ \ P_n^0(x\leftrightarrow y  \not\leftrightarrow w \leftrightarrow z)\le \ve.
\end{align*}
\end{lmm}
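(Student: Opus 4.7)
The plan is to deploy the Pisztora renormalization blocks $B_k(x), B_k(y), B_k(w), B_k(z)$ at a scale $k = k(d, \ve)$ to be chosen large enough. By hypothesis these four blocks are pairwise disjoint and contained in $B_n$. Let $F$ be the event that, for every one of the $\binom{4}{2} = 6$ pairs of distinct blocks $B_k(v), B_k(v')$, every open path of length at least $k/2$ in $B_k(v)$ is connected in $B_n$ to every open path of length at least $k/2$ in $B_k(v')$. Applying Lemma \ref{oldlmm} to each pair and taking a union bound, $P_n^0(F^c) \le \ve/2$ once $k$ is large enough.

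On $F$, I claim that $\{x \leftrightarrow y \leftrightarrow w \leftrightarrow z\}$ coincides with $\bigcap_v A_v$, where $A_v := \{v \leftrightarrow \partial B_k(v)\text{ using only edges of }B_k(v)\}$. For the forward inclusion, any open path from $v$ to another of the four vertices must exit $B_k(v)$ (since interpoint distances exceed $2k$), so its initial segment is an open path of length at least $k$ from $v$ to $\partial B_k(v)$ inside $B_k(v)$, witnessing $A_v$. For the reverse, the four long paths witnessed by the $A_v$ (each of length at least $k \ge k/2$) are pairwise connected in $B_n$ by the definition of $F$, so all four vertices lie in a common cluster. The same forward argument also shows $\{x\leftrightarrow y,\, w\leftrightarrow z\} \cap F \subseteq \bigcap_v A_v \cap F \subseteq \{x\leftrightarrow w\}$, so $\{x \leftrightarrow y \not\leftrightarrow w\leftrightarrow z\}\cap F = \emptyset$, proving the second bound $P_n^0(x \leftrightarrow y \not\leftrightarrow w \leftrightarrow z) \le P_n^0(F^c) \le \ve$. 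For the first bound the identification on $F$ gives
\[
P_n^0(x \leftrightarrow y \leftrightarrow w \leftrightarrow z) \le P_n^0\!\Bigl(\bigcap_v A_v\Bigr) + P_n^0(F^c).
\]

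Each $A_v$ is an increasing event depending only on edges in $B_k(v)$, and the four blocks are disjoint. By the spatial Markov property of the FK-Ising model and the monotonicity of increasing events in the boundary condition, conditioning successively on the edges outside each $B_k(v)$ shows that each conditional probability of $A_v$ is bounded almost surely by $P_k^1(0 \leftrightarrow \partial B_k)$, so $P_n^0(\bigcap_v A_v) \le (P_k^1(0 \leftrightarrow \partial B_k))^4$. The main remaining task is to show that $P_k^1(0 \leftrightarrow \partial B_k) \to \sqrt{q}$ as $k \to \infty$. The lower bound is immediate from $P_k^1 \ge P_k^0$ together with \eqref{newq}. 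For the upper bound, the containment $\{0 \leftrightarrow \partial B_k\} \subseteq \{0 \leftrightarrow \partial B_{k'}\}$ for any $k' < k$ (a path to the outer boundary must cross the inner one) combined with the fact that $\{0 \leftrightarrow \partial B_{k'}\}$ is a cylinder event on edges in $B_{k'}$ and that $P_k^1 \to P$ on cylinder events at supercritical $p$, gives $\limsup_{k\to\infty} P_k^1(0\leftrightarrow \partial B_k) \le P(0\leftrightarrow \partial B_{k'})$ for every fixed $k'$; letting $k' \to \infty$ and invoking \eqref{newq} yields $\limsup_k P_k^1(0\leftrightarrow \partial B_k) \le \sqrt{q}$. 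Choosing $k$ large enough that both $(P_k^1(0\leftrightarrow \partial B_k))^4 \le q^2 + \ve/2$ and $P_n^0(F^c) \le \ve/2$ completes the first bound. The chief subtlety is this last finite-volume convergence, which rests on the coincidence of the free and wired infinite-volume measures at supercritical $p$ and on the continuity encoded in \eqref{newq}.
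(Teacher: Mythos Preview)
Your proof is correct. The second inequality is handled exactly as the paper does (via Lemma~\ref{oldlmm}), but for the first inequality you take a genuinely different route. The paper conditions on the increasing event $F=\{$all edges of $B_n$ outside the four boxes are open$\}$; by FKG, $P_n^0(\,\cdot\,)\le P_n^0(\,\cdot\mid F)$ on increasing events, and under this conditioning the four boxes become independent with wired boundary, giving the bound $(P_k^1(0\leftrightarrow\partial B_l))^4$ in one stroke for an auxiliary scale $l<k$. You instead keep the Pisztora event from Lemma~\ref{oldlmm} as your $F$ for both inequalities, and bound $P_n^0(\bigcap_v A_v)$ by iterating the domain Markov property plus monotonicity in the boundary condition to peel off each box against the wired measure. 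Your unification is neat and avoids introducing a second ``all-open'' event; the paper's trick is slightly slicker in that it yields exact independence immediately and uses a two-scale argument ($l$ then $k$) so that only weak convergence of $P_k^1$ on the fixed cylinder $\{0\leftrightarrow\partial B_l\}$ is needed, whereas you must argue the diagonal limit $P_k^1(0\leftrightarrow\partial B_k)\to\sqrt q$ directly. Your argument for that diagonal limit is correct (and the lower bound you mention is actually not needed---only $\limsup\le\sqrt q$ matters for the inequality you want).
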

\begin{proof}
For the proof of the first inequality, we proceed as in the proof of Lemma \ref{altlmm1}. Since the interpoint distances are all greater than $2k$, the cubes $B_k(x)$, $B_k(y)$, $B_k(w)$ and $B_k(z)$ are disjoint. Let $F$ be the event that all edges of $B_n$ that do not belong to these cubes are open. Then by the FKG property, we have that for any $l<k$, 
\begin{align*}
P_n^0(x\leftrightarrow y \leftrightarrow w \leftrightarrow z) &\le P_n^0(x\leftrightarrow \partial B_l(x), \, y \leftrightarrow \partial B_l(y), \, w \leftrightarrow \partial B_l(w), \, z\leftrightarrow \partial B_l(z))\\
&\le P_n^0(x\leftrightarrow \partial B_l(x), \, y \leftrightarrow \partial B_l(y), \, w \leftrightarrow \partial B_l(w), \, z\leftrightarrow \partial B_l(z) | F).
\end{align*}
But, given $F$, the configurations inside the cubes $B_k(x)$, $B_k(y)$, $B_k(w)$ and $B_k(z)$ are independent, and follow the FK-Ising models in these cubes with wired boundary condition. Thus, we get
\begin{align*}
P_n^0(x\leftrightarrow y \leftrightarrow w \leftrightarrow z) &\le (P_k^1(0\leftrightarrow \partial B_l))^4. 
\end{align*}
By the equality of the infinite volume measures under free and wired boundary conditions, we have that for $l$ fixed and $k$ sufficiently large (depending on $l$, $d$ and $\ve$), 
\[
(P_k^1(0\leftrightarrow \partial B_l))^4 \le (P(0\leftrightarrow \partial B_l))^4 +\frac{\ve}{2}.
\]
By the definition of $q$, a large enough value of $l$ ensures that
\[
(P(0\leftrightarrow \partial B_l))^4 \le q^2 + \frac{\ve}{2}. 
\]
Combining the last three displays proves the first claim of the lemma. 

For the second claim, note that the event $x\leftrightarrow y  \not\leftrightarrow w \leftrightarrow z$ implies that $x\leftrightarrow \partial B_k(x)$, $y\leftrightarrow \partial B_k(y)$, $w\leftrightarrow \partial B_k(w)$, and $z\leftrightarrow \partial B_k(z)$, but the open cluster joining $y$ to $\partial B_k(y)$ is not connected to the open cluster joining $w$ to $\partial B_k (w)$. By Lemma \ref{oldlmm}, the probability of this event can be made as small as we like by choosing $k$ large enough. 
\end{proof}
The next lemma is the analogue of Lemma \ref{fin2} for four-point connectivities.
\begin{lmm}\label{four2}
Given any $\ve >0$ and $n$, there exist $k$ and $l$ depending only on $d$ and $\ve$ (and not on $n$), such that if $x,y,w,z\in B_n$ are such that all interpoint $\ell^\infty$ distances are greater than $2k$, and all four points are at least at an $\ell^\infty$ distance $l$ from the boundary, then we have 
\begin{align*}
P_n^0(x\leftrightarrow y \leftrightarrow w \leftrightarrow z) \ge q^2 - \ve.
\end{align*}
\end{lmm}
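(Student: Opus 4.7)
The plan is to extend the argument of Lemma \ref{fin2} from two points to four, following the same overall structure. Given $\ve>0$ and $x,y,w,z \in B_n$ satisfying the hypotheses (so that the four cubes $B_k(x), B_k(y), B_k(w), B_k(z)$ are pairwise disjoint and each contained in $B_n$, provided $l\ge k$), I would introduce an analogous bad event $E$ capturing failure of connectivity: namely, that there exist open clusters $C_x, C_y, C_w, C_z$, one in each annulus $B_k(u)\setminus B_{k/2}(u)$ joining $\partial B_{k/2}(u)$ to $\partial B_k(u)$, that are not all in the same connected component of the open subgraph of $B_n$. The key observation is that if each of the four points is connected to its respective $\partial B_k(u)$ and $E$ fails, then $x, y, w, z$ must all lie in the same component, since the portion of any such path inside the annulus serves as a valid candidate $C_u$.

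Applying the FKG inequality to the four increasing events $\{u\leftrightarrow \partial B_k(u)\}$, combined with the monotonicity-in-$n$ trick of Lemma \ref{fin2}, I would obtain
\[
P_n^0(x\leftrightarrow y\leftrightarrow w\leftrightarrow z) \ge (P_l^0(0\leftrightarrow \partial B_k))^4 - P_n^0(E).
\]
First choosing $k$ large so that $P(0\leftrightarrow \partial B_k) \ge \sqrt{q} - \ve/16$, and then $l\ge k$ large so that $P_l^0(0\leftrightarrow \partial B_k) \ge P(0\leftrightarrow \partial B_k) - \ve/16$, a routine expansion (using $\sqrt q\le 1$) yields $(P_l^0(0\leftrightarrow \partial B_k))^4 \ge q^2 - \ve/2$.

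To bound $P_n^0(E)$, I would invoke Lemma \ref{oldlmm}. Since each annulus $B_k(u)\setminus B_{k/2}(u)$ has $\ell^\infty$ width $k/2$, every candidate cluster $C_u$ contains an open path of length at least $k/2$ inside $B_k(u)$. Applying Lemma \ref{oldlmm} with tolerance $\ve/12$ to each of the $\binom{4}{2}=6$ pairs of disjoint cubes among the four (which is valid once $k$ is chosen large enough) and taking a union bound, I get that with probability at least $1-\ve/2$, for every pair $(u,v)$ any open path of length $k/2$ in $B_k(u)$ is connected in $B_n$ to any such path in $B_k(v)$. On this event the four candidates $C_x, C_y, C_w, C_z$ are pairwise connected and thus in one component, so $E$ fails; this gives $P_n^0(E)\le \ve/2$. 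Combining the two bounds yields $P_n^0(x\leftrightarrow y\leftrightarrow w\leftrightarrow z) \ge q^2 - \ve$, as required.

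The only genuinely new step relative to the two-point case is the bootstrap from pairwise connections (via Lemma \ref{oldlmm}) to a single-component conclusion for all four annular clusters; this is the mildest of obstacles, cleanly handled by the union bound over the six pairs, since Lemma \ref{oldlmm} applies to any disjoint pair of cubes inside $B_n$. All other ingredients (FKG, the monotonicity-in-$n$ lower bound on the individual probabilities $P_n^0(u\leftrightarrow \partial B_k(u))$, and the choices of $k$ and $l$) are direct four-point analogues of the corresponding steps in Lemma \ref{fin2}.
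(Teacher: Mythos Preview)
Your proposal is correct and follows essentially the same approach as the paper's proof: define the bad event $E$ as the existence of annular crossing clusters that fail to all be mutually connected, bound the main term via FKG and the monotonicity-in-$n$ comparison with $P_l^0$, and control $P_n^0(E)$ through Lemma~\ref{oldlmm}. The only difference is cosmetic---you make the union bound over the $\binom{4}{2}$ pairs explicit, whereas the paper simply asserts that Lemma~\ref{oldlmm} gives $P_n^0(E)<\ve/2$ for large $k$.
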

\begin{proof}
Let $C_x$ and $C_y$ be as in the proof of Lemma \ref{fin2}, and define $C_w$ and $C_z$ analogously. Let $E$ be the event at least two of these clusters are not connected to each other. Then, as in the proof of Lemma \ref{fin2}, we can use Lemma \ref{oldlmm} to conclude that $P_n^0(E)<\frac{\ve}{2}$ if $k$ is chosen large enough. Also, as in the proof of Lemma \ref{fin2}, we deduce that
\begin{align*}
&P_n^0(x\leftrightarrow y \leftrightarrow w \leftrightarrow z) \\
&\ge P_n^0(x\leftrightarrow \partial B_k(x),\, y \leftrightarrow \partial B_k(y), \, w \leftrightarrow \partial B_k(w), \, z\leftrightarrow \partial B_k(z)) - P_n^0(E).
\end{align*}
The proof is now completed by applying the FKG inequality to replace the first probability on the right by the product of the probabilities of the four events, and then proceeding as in the proof of Lemma \ref{fin2} to show that these probabilities are all bounded below by $\sqrt{q}-\frac{\ve}{100}$ if $k$ and $l$ are large enough.
\end{proof}
Finally, the following lemma gives the analogue of equation \eqref{twoptid} for four-point correlatons.
\begin{lmm}\label{fourrep}
Take any $\beta >0$ and let $p:=1-e^{-2\beta}$. Take any $n$. Let $\smallavg{\cdot}$ denote averaging with respect to the Ising model on $B_n$ at inverse temperature $\beta$ and free  boundary condition, and let $P_n^0(\cdot)$ denote probability computed under the FK-Ising model with parameter $p$ on $B_n$ under free boundary condition. Then for any distinct $i,j,k,l\in B_n$,
\begin{align*}
\smallavg{\sigma_i\sigma_j\sigma_k\sigma_l} &= P_n^0(\textup{Any open cluster contains even number of elements from $\{i,j,k,l\}$})\\
&= P_n^0(i\leftrightarrow j \leftrightarrow k \leftrightarrow l) + P_n^0( i\leftrightarrow j \not \leftrightarrow k \leftrightarrow l) \\
&\qquad + P_n^0(i\leftrightarrow k\not \leftrightarrow j\leftrightarrow l) + P_n^0(i\leftrightarrow l \not\leftrightarrow j \leftrightarrow k).
\end{align*}
\end{lmm}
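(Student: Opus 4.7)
The plan is to prove Lemma \ref{fourrep} by invoking the Edwards--Sokal coupling between the Ising and FK-Ising models, followed by a short case analysis on the partition of $\{i,j,k,l\}$ induced by the random cluster configuration. Recall that the Edwards--Sokal coupling produces a joint law on spin and edge configurations whose marginal on edges is the FK-Ising measure $P_n^0$ with parameter $p = 1 - e^{-2\beta}$, and whose conditional law on spins given an edge configuration $\omega$ is obtained by assigning to each open cluster of $\omega$ an independent uniform $\pm 1$ value, with all vertices in the same cluster inheriting that value. The Ising measure with free boundary condition is the spin marginal of this coupling, so $\smallavg{\sigma_i\sigma_j\sigma_k\sigma_l}$ equals the expectation of $\sigma_i\sigma_j\sigma_k\sigma_l$ under the joint law.

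First I would condition on $\omega$ and compute $\E[\sigma_i\sigma_j\sigma_k\sigma_l \mid \omega]$. If $C_1,\ldots,C_r$ are the open clusters containing at least one of the four vertices, and cluster $C_m$ contains $n_m$ of them, then $\sigma_i\sigma_j\sigma_k\sigma_l = \prod_m \xi_m^{n_m}$, where $\xi_1,\ldots,\xi_r$ are i.i.d.\ uniform $\pm 1$. The conditional expectation therefore equals $1$ if every $n_m$ is even, and $0$ otherwise. The partitions of $\{i,j,k,l\}$ into parts of even size are exactly the trivial partition $\{i,j,k,l\}$ (all four together) and the three pairings $\{i,j\}\vert\{k,l\}$, $\{i,k\}\vert\{j,l\}$, $\{i,l\}\vert\{j,k\}$.

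Taking the $P_n^0$-expectation then gives
\begin{align*}
\smallavg{\sigma_i\sigma_j\sigma_k\sigma_l} = P_n^0(\text{every open cluster meets } \{i,j,k,l\} \text{ in an even number of points}),
\end{align*}
which is the first equality. The second equality is then just rewriting the partition events in connectivity notation: the trivial partition corresponds to $\{i\leftrightarrow j\leftrightarrow k\leftrightarrow l\}$, while the pairing $\{i,j\}\vert\{k,l\}$ corresponds to $\{i\leftrightarrow j\}\cap\{k\leftrightarrow l\}\cap\{i\not\leftrightarrow k\}$, and similarly for the other two pairings; these four events are disjoint, so we may sum their probabilities. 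There is really no obstacle here beyond the bookkeeping of matching each partition to the correct conjunction of connectivity and disconnectivity events, and verifying disjointness; the content is entirely contained in the Edwards--Sokal coupling, which is a standard ingredient for the random-cluster representation used elsewhere in the paper (e.g.\ in the two-point identity \eqref{twoptid}).
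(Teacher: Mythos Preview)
Your proof is correct and follows essentially the same approach as the paper: both invoke the Edwards--Sokal coupling, condition on the FK configuration, and observe that the conditional expectation of $\sigma_i\sigma_j\sigma_k\sigma_l$ is $1$ when every cluster contains an even number of the four points and $0$ otherwise, then decompose the resulting event into the four connectivity patterns. The only difference is stylistic---you write the conditional expectation as $\prod_m \xi_m^{n_m}$ while the paper lists the cases directly---but the content is identical.
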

\begin{proof}
It is a standard fact that a configuration from the Ising model on $B_n$ at inverse temperature $\beta$ and free boundary condition may be obtained as follows. First, generate a configuration from the FK-Ising model on $B_n$ with parameter $p$, under free boundary condition. Then, take the connected components of vertices in this configuration, and independently for each component, assign the same spin to all vertices, where the spin is chosen to be $1$ or $-1$ with equal probability. (For a proof, see \cite[Theorem 1.16]{grimmett06}.)

Now take any distinct $i,j,k,l\in B_n$. To compute $\smallavg{\sigma_i\sigma_j\sigma_k\sigma_l}$, we consider the above coupling and compute the conditional expectations given the FK-Ising configuration, which we denote by $\smallavg{\sigma_i\sigma_j\sigma_k\sigma_l}'$. The following are easy to see:
\begin{itemize}
\item If $i, j,k,l$ are all in the same cluster, them $\smallavg{\sigma_i\sigma_j\sigma_k\sigma_l}' = 1$.
\item If two of $i,j,k,l$ are in one cluster and the other two are in a different cluster, then $\smallavg{\sigma_i\sigma_j\sigma_k\sigma_l}' = 1$.
\item In all other cases, $\smallavg{\sigma_i\sigma_j\sigma_k\sigma_l}' = 0$.
\end{itemize}
Taking unconditional expectation gives the desired result.
\end{proof}
We now arrive at the main result of this subsection.
\begin{thm}\label{diffthm20}
Let $d$, $\beta_0$, $\beta$ and $q$ be as in Theorem \ref{diffthm}. Take any $\ve \in (0,1)$. For each $n$, let 
\begin{align*}
\gamma_n = \gamma_n(\beta,d,\ve)&:= \max\{|\smallavg{\sigma_i\sigma_j\sigma_k\sigma_l} - q^2|: i,j,k,l\in B_{\lfloor (1-\ve)n\rfloor}, \\
&\qquad \qquad \qquad \textup{all pairwise $\ell^1$ distances between $i,j,k,l$ are $\ge \ve n$}\},
\end{align*}
where $\smallavg{\cdot}$ denotes averaging with respect to the Ising model on $B_n$ at inverse temperature $\beta$ and free boundary condition. Then $\gamma_n \to 0$ as $n\to \infty$.
\end{thm}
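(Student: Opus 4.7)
\textbf{Plan for Theorem \ref{diffthm20}.} The proof is a direct assembly of Lemma \ref{fourrep} (the four-point Edwards--Sokal expansion) with the two finite-volume connectivity estimates Lemmas \ref{four1} and \ref{four2}. The only content beyond pasting these together is a small comparison between $\ell^1$ and $\ell^\infty$ distances to match the two sets of hypotheses.

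Fix $\delta \in (0,1)$. Apply Lemma \ref{four1} with $\delta$ in place of $\ve$ to obtain a threshold $K_1$, and apply Lemma \ref{four2} with $\delta$ in place of $\ve$ to obtain thresholds $K_2$ and $L$; set $K := \max(K_1,K_2)$. Since $|x|_1 \le d|x|_\infty$ on $\Z^d$, any quadruple $i,j,k,l$ in the collection defining $\gamma_n$ has all pairwise $\ell^\infty$ distances at least $\ve n/d$, and every one of the four points lies at $\ell^\infty$ distance at least $\ve n - 1$ from $\partial B_n$. Hence, for all $n$ large enough that $\ve n / d > 2K$ and $\ve n - 1 \ge L$, the hypotheses of both Lemma \ref{four1} and Lemma \ref{four2} are satisfied simultaneously by every quadruple entering the definition of $\gamma_n$.

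Now fix such an $n$ and such a quadruple. By Lemma \ref{fourrep},
\eq{
\smallavg{\sigma_i\sigma_j\sigma_k\sigma_l} &= P_n^0(i\leftrightarrow j \leftrightarrow k \leftrightarrow l) + P_n^0( i\leftrightarrow j \not \leftrightarrow k \leftrightarrow l) \\
&\qquad + P_n^0(i\leftrightarrow k\not \leftrightarrow j\leftrightarrow l) + P_n^0(i\leftrightarrow l \not\leftrightarrow j \leftrightarrow k).
}
The lower bound is immediate: the last three terms are nonnegative, so Lemma \ref{four2} yields $\smallavg{\sigma_i\sigma_j\sigma_k\sigma_l} \ge P_n^0(i\leftrightarrow j \leftrightarrow k \leftrightarrow l) \ge q^2 - \delta$. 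For the upper bound, Lemma \ref{four1} bounds the first term by $q^2 + \delta$, and its second inequality, applied to each of the three ``cross-pairing'' events separately (the roles of the pairs $\{i,j\},\{k,l\}$ being interchanged with $\{i,k\},\{j,l\}$ and $\{i,l\},\{j,k\}$), bounds each of the other three terms by $\delta$. Altogether $\smallavg{\sigma_i\sigma_j\sigma_k\sigma_l} \le q^2 + 4\delta$, so $|\smallavg{\sigma_i\sigma_j\sigma_k\sigma_l} - q^2| \le 4\delta$ uniformly over all admissible quadruples.

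Therefore $\gamma_n \le 4\delta$ for all sufficiently large $n$, and since $\delta$ was arbitrary, $\gamma_n \to 0$ as $n\to\infty$. There is no real obstacle; the heavy lifting has already been done in Lemmas \ref{four1}, \ref{four2} and \ref{fourrep}, and the only verification is the elementary $|x|_\infty \ge |x|_1/d$ comparison that converts the $\ell^1$-separation hypothesis in the theorem into the $\ell^\infty$-separation hypotheses of the connectivity lemmas.
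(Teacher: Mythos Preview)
Your proof is correct and follows exactly the approach the paper indicates: assembling Lemma~\ref{fourrep} with the connectivity estimates of Lemmas~\ref{four1} and~\ref{four2}, after converting the $\ell^1$-separation hypothesis into the $\ell^\infty$-separation required by those lemmas. The paper's own proof simply states that the result ``follows from the representation of the four-point correlation given in Lemma~\ref{fourrep}, together with the upper and lower bounds given in Lemma~\ref{four1} and Lemma~\ref{four2},'' and you have supplied precisely those details.
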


\begin{proof}
It is easy to see that Theorem \ref{diffthm20} follows from the representation of the four-point correlation given in Lemma \ref{fourrep}, together with the upper and lower bounds given in Lemma \ref{four1} and Lemma~\ref{four2}. 
\end{proof}

\subsection{Concentration of the magnetization and the overlap in the Ising model}
To generalize the results for two-point and four-point correlations to $l$-point correlations for all even $l$, as well as for other purposes, we need the following theorem. 
\begin{thm}\label{diffthm2}
Let $d$, $\beta_0$, $\beta$ and $q$ be as in Theorem \ref{diffthm}. Let $m$ be the magnetization and $R_{1,2}$ be the overlap between two independent replicas in the ferromagnetic Ising model on $B_n$ under free boundary condition and inverse temperature $\beta$.  Then  $\smallavg{(m^2-q)^2}\to 0$ and $\smallavg{(R_{1,2}^2 - q^2)^2} \to 0$ as $n\to \infty$.
\end{thm}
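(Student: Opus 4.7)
The plan is to expand the squared quantities $\smallavg{(m^2-q)^2}$ and $\smallavg{(R_{1,2}^2-q^2)^2}$ as averages of two-point and four-point correlations, and then invoke Theorem \ref{diffthm} and Theorem \ref{diffthm20} respectively to identify the limits. No new coupling or percolation input is needed here; all the heavy lifting has already been done in those two theorems, and what remains is essentially bookkeeping.

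For the magnetization, writing $m = |B_n|^{-1}\sum_i \sigma_i$ gives
\[
\smallavg{m^2} = \frac{1}{|B_n|^2}\sum_{i,j\in B_n}\smallavg{\sigma_i\sigma_j}, \qquad \smallavg{m^4} = \frac{1}{|B_n|^4}\sum_{i,j,k,l\in B_n}\smallavg{\sigma_i\sigma_j\sigma_k\sigma_l}.
\]
I would fix a small $\ve>0$ and split each sum into a ``bulk'' part, consisting of indices lying in $B_{\lfloor(1-\ve)n\rfloor}$ with all pairwise $\ell^1$ distances $\ge \ve n$, and a ``bad'' part containing the remainder. The bad part has at most $O(\ve)$ relative volume (boundary layers plus diagonal-like strips), and every correlation is bounded by one, so the bad part contributes $O(\ve)$ uniformly in $n$. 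On the bulk part, Theorem \ref{diffthm} gives $\smallavg{\sigma_i\sigma_j}=q+o(1)$ and Theorem \ref{diffthm20} gives $\smallavg{\sigma_i\sigma_j\sigma_k\sigma_l}=q^2+o(1)$. Hence $\smallavg{m^2}\to q$ and $\smallavg{m^4}\to q^2$ up to an error of order $\ve$, and letting $\ve\to 0$ after $n\to\infty$ yields
\[
\smallavg{(m^2-q)^2} = \smallavg{m^4} - 2q\smallavg{m^2} + q^2 \longrightarrow q^2 - 2q\cdot q + q^2 = 0.
\]

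For the overlap, I would use the independence of the two replicas $\sigma^1,\sigma^2$ under the quenched law to write
\[
\smallavg{R_{1,2}^2} = \frac{1}{|B_n|^2}\sum_{i,j\in B_n}\smallavg{\sigma_i\sigma_j}^2,\qquad \smallavg{R_{1,2}^4} = \frac{1}{|B_n|^4}\sum_{i,j,k,l\in B_n}\smallavg{\sigma_i\sigma_j\sigma_k\sigma_l}^2.
\]
The same bulk/bad decomposition applies: on the bulk part, the two-point correlations squared are $q^2+o(1)$ and the four-point correlations squared are $q^4+o(1)$, while the bad part contributes $O(\ve)$ since every correlation is bounded by one. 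This gives $\smallavg{R_{1,2}^2}\to q^2$ and $\smallavg{R_{1,2}^4}\to q^4$, so
\[
\smallavg{(R_{1,2}^2-q^2)^2} = \smallavg{R_{1,2}^4} - 2q^2\smallavg{R_{1,2}^2} + q^4 \longrightarrow 0.
\]

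The only real point to be careful about is the handling of the ``bad'' indices in the four-fold sum: degenerate configurations where two or more of $i,j,k,l$ coincide, quadruples with at least one point within distance $\ve n$ of the boundary, and quadruples with some pairwise distance less than $\ve n$. A direct count shows all of these contribute at most $C(d)\ve$ to the normalized sum (they occupy a vanishing fraction of $B_n^4$ as $\ve\to 0$, uniformly in $n$), and since correlations are bounded by one this is an acceptable error. Once this accounting is in place, both statements follow immediately from Theorems \ref{diffthm} and \ref{diffthm20}; I do not foresee any serious obstacle.
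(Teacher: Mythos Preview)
Your proposal is correct and follows essentially the same approach as the paper's own proof: expand the moments as sums of two- and four-point correlations, split into bulk and bad index sets governed by a parameter $\ve$, apply Theorems \ref{diffthm} and \ref{diffthm20} on the bulk, bound the bad set by a volume count, and then let $\ve\to 0$ after $n\to\infty$. The paper even defines the same bulk sets $S$ and $T$ and carries out exactly the bookkeeping you describe.
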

\begin{proof}
Throughout this proof, $C, C_1, C_2,\ldots$ will denote constants that depend only on $d$, whose values may change from line to line. Fix some $n$ and some $\ve \in (0,1)$. Let $m := \lfloor (1-\ve) n\rfloor$. Let $\delta_n$ be as in Theorem \ref{diffthm} and $\gamma_n$ be as in Theorem \ref{diffthm20}. Let
\begin{align}\label{sdefin}
S := \{(i,j)\in B_n \times B_n: i,j\in B_m, \, |i-j|_1\ge \ve n\},
\end{align} 
and let 
\[
T := \{(i,j,k,l)\in B_m^4: \textup{all pairwise $\ell^1$ distances between $i,j,k,l$ are $\ge \ve n$}\}.
\]
Let $S^c := B_n^2 \setminus S$ and $T^c := B_n^4 \setminus T$. Now, if $(i,j)\in S^c$, then either at least one of $i$ and $j$ is in $B_n \setminus B_m$, or $|i-j|_1< \ve n$. From this observation, it follows that 
\begin{align}\label{scbound}
|S^c| \le C\ve n^{2d} + C\ve^d n^{2d},
\end{align}
where $C$ depends only on $d$. Since
\begin{align*}
|\smallavg{m^2} - q| &\le \frac{1}{|B_n|^2}\sum_{i,j\in B_n} |\smallavg{\sigma_i\sigma_j} - q| \le \frac{|S^c|}{|B_n|^2} + \frac{|S|\delta_n}{|B_n|^2},
\end{align*}
this shows that 
\begin{align}\label{mineq1}
\limsup_{n\to\infty}|\smallavg{m^2} - q|  \le C\ve.
\end{align}
On the other hand,
\begin{align*}
|\smallavg{m^4} - q^2| &\le \frac{1}{|B_n|^4} \sum_{i,j,k,l\in B_n} |\smallavg{\sigma_i\sigma_j\sigma_k\sigma_l} - q^2|\\
&\le \frac{|T^c|}{|B_n|^4} + \frac{|T|\gamma_n}{|B_n|^4},
\end{align*}
which shows that 
\begin{align}\label{mineq2}
\limsup_{n\to\infty}|\smallavg{m^4} - q^2|  \le C\ve.
\end{align}
Combining \eqref{mineq1} and \eqref{mineq2}, we get
\begin{align*}
\limsup_{n\to\infty} \smallavg{(m^2-q)^2} &= \limsup_{n\to\infty}( \smallavg{m^4} - 2\smallavg{m^2}q + q^2)\\
&=  \limsup_{n\to\infty}( \smallavg{m^4} - q^2 - 2(\smallavg{m^2}-q)q)\\
&\le C\ve. 
\end{align*}
Since $\ve$ is arbitrary, this completes the proof of the first assertion of the theorem. Next, note that
\begin{align*}
\smallavg{R_{1,2}^2} &= \bigavg{\biggl(\frac{1}{|B_n|}\sum_{i\in B_n} \sigma_i^1 \sigma_i^2\biggr)^2}\\
&= \frac{1}{|B_n|^2} \sum_{i,j\in B_n} \smallavg{\sigma_i^1 \sigma_i^2 \sigma_j^1 \sigma_j^2}\\
&= \frac{1}{|B_n|^2} \sum_{i,j\in B_n} \smallavg{\sigma_i\sigma_j}^2. 
\end{align*}
Proceeding as above, this shows that $\smallavg{R_{1,2}^2}\to q^2$ as $n\to\infty$. Similarly,
\begin{align*}
\smallavg{R_{1,2}^4} &= \frac{1}{|B_n|^4}\sum_{i,j,k,l} \smallavg{\sigma_i\sigma_j\sigma_k\sigma_l}^2,
\end{align*}
which can be used as above to show that $\smallavg{R_{1,2}^4}\to q^4$ as $n\to\infty$. Combining, we get that $\smallavg{(R_{1,2}^2-q^2)^2} \to 0$. 
\end{proof}

\begin{cor}\label{diffcor}
In the setting of Theorem \ref{diffthm2}, as $n\to\infty$, the law of $m$ tends to the probability measure that puts equal mass on $\pm \sqrt{q}$, and the law of $R_{1,2}$ tends to the probability measure that puts equal mass on $\pm q$. 
\end{cor}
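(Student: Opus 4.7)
The plan is to combine the $L^2$-concentration statements from Theorem \ref{diffthm2} with the spin-flip symmetry $\sigma \mapsto -\sigma$ of the Ising Hamiltonian on $B_n$ under free boundary condition. Since the Hamiltonian (with $h=0$) is invariant under $\sigma \mapsto -\sigma$, the Gibbs measure is invariant as well, so the law of $m$ equals the law of $-m$, and for two independent replicas the joint law of $(\sigma^1,\sigma^2)$ coincides with that of $(-\sigma^1,\sigma^2)$, which sends $R_{1,2} \mapsto -R_{1,2}$.

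For the magnetization, I would first note that $\smallavg{(m^2 - q)^2} \to 0$ implies $m^2 \to q$ in probability under $\smallavg{\cdot}$, hence $|m| \to \sqrt{q}$ in probability by the continuous mapping theorem (since $q \ge 0$). Then, for any bounded continuous $f : \R \to \R$, I would decompose $f = g + h$ into its even and odd parts and use the symmetry to write $\smallavg{f(m)} = \smallavg{g(m)} = \smallavg{g(|m|)}$. Because $g$ is bounded and continuous and $|m| \to \sqrt{q}$ in probability, $\smallavg{g(|m|)} \to g(\sqrt{q}) = \tfrac{1}{2}(f(\sqrt{q}) + f(-\sqrt{q}))$, which is weak convergence of the law of $m$ to $\tfrac{1}{2}(\delta_{\sqrt{q}} + \delta_{-\sqrt{q}})$.

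For the overlap the argument is identical: Theorem \ref{diffthm2} gives $R_{1,2}^2 \to q^2$ in probability under the product Gibbs measure, hence $|R_{1,2}| \to q$; and the symmetry above makes $R_{1,2}$ symmetric in distribution. The same even/odd decomposition then yields weak convergence of the law of $R_{1,2}$ to $\tfrac{1}{2}(\delta_q + \delta_{-q})$. There is no real obstacle here — the corollary is essentially a bookkeeping step that upgrades the second-moment bounds of Theorem \ref{diffthm2} to weak convergence by exploiting the $\mathbb{Z}_2$ symmetry to rule out any asymmetric mass on $\{\pm\sqrt{q}\}$ (respectively $\{\pm q\}$).
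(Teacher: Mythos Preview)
Your proposal is correct and follows essentially the same approach as the paper: the paper's proof simply says to combine Theorem \ref{diffthm2} with the observation that $\smallavg{m}=\smallavg{R_{1,2}}=0$ by the $\sigma\mapsto-\sigma$ invariance, which is exactly the symmetry you invoke. Your even/odd test-function decomposition is a clean way to spell out the step the paper leaves implicit, but there is no substantive difference in strategy.
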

\begin{proof}
Simply combine Theorem \ref{diffthm2} with the observation that $\smallavg{R_{1,2}} = \smallavg{m}=0$ by the invariance of the model under the transform $\sigma \to -\sigma$. 
\end{proof}

\subsection{Proof of Theorem \ref{diffthmmain}}\label{diffproofsec}
For $l=2$, the proof is contained in  the proof of Theorem \ref{diffthm2}. Take any even $l\ge 4$. Note that
\begin{align*}
\frac{1}{|B_n|^l}\sum_{i_1,\ldots,i_l\in B_n} \smallavg{\sigma_{i_1}\cdots \sigma_{i_l}} = \smallavg{m^l},
\ \ \ \frac{1}{|B_n|^l}\sum_{i_1,\ldots,i_l\in B_n} \smallavg{\sigma_{i_1}\cdots \sigma_{i_l}}^2 = \smallavg{R_{1,2}^l}.
\end{align*}
Thus, by the Cauchy--Schwarz inequality, 
\begin{align}
&\frac{1}{|B_n|^l}\sum_{i_1,\ldots,i_l\in B_n} |\smallavg{\sigma_{i_1}\cdots \sigma_{i_l}} - q^{\frac{l}{2}}| \le \biggl[\frac{1}{|B_n|^l}\sum_{i_1,\ldots,i_l\in B_n} (\smallavg{\sigma_{i_1}\cdots \sigma_{i_l}} - q^{\frac{l}{2}})^2\biggr]^{\frac{1}{2}}\notag \\
&= \biggl[\frac{1}{|B_n|^l}\sum_{i_1,\ldots,i_l\in B_n} (\smallavg{\sigma_{i_1}\cdots \sigma_{i_l}}^2 - 2q^{\frac{l}{2}}\smallavg{\sigma_{i_1}\cdots \sigma_{i_l}} + q^l)\biggr]^{\frac{1}{2}}\notag \\
&= [\smallavg{R_{1,2}^l} - 2q^{\frac{l}{2}}\smallavg{m^l} + q^l]^{\frac{1}{2}}. \label{r12l}
\end{align}
Now, by the fact that $R_{1,2}$ and $q$ are both in $[0,1]$, and the inequality 
\[
|x^{\frac{l}{2}} - y^{\frac{l}{2}}|\le \frac{l}{2}|x-y|
\]
that holds for all $x,y\in [-1,1]$, we have 
\begin{align*}
|\smallavg{R_{1,2}^l} - q^l| &\le  \smallavg{|R_{1,2}^l - q^l|}\le \frac{l}{2}\smallavg{|R_{1,2}^2 - q^2|}. 
\end{align*}
Thus, by Theorem \ref{diffthm2}, $\smallavg{R_{1,2}^l} \to q^l$ as $n\to \infty$. Similarly,
\[
|\smallavg{m^l} - q^{\frac{l}{2}}| \le \smallavg{|m^l - q^{\frac{l}{2}}|} \le\frac{l}{2} \smallavg{|m^2 - q|}, 
\]
and so, $\smallavg{m^l}\to q^{\frac{l}{2}}$ as $n\to \infty$. Using these in \eqref{r12l} completes the proof.

\section{Proofs of the main results}
In this section, we will complete the proofs of the results from Section \ref{resultsec} (except Theorem \ref{diffthmmain}, which has already been proved in the previous section). Throughout this section, we will let $\smallavg{\cdot}$ denote averaging with respect to the model on $B_n$ with Hamiltonian $H_n$ defined in \eqref{hamil}, at inverse temperature $\beta$. Diverging from the notation used in the previous section, we will use $\smallavg{\cdot}_0$ to denote averaging with respect to the Ising model on $B_n$ at inverse temperature $\beta$ and free boundary condition (because this model corresponds to the case $h=0$ of our model). The following lemma (which is just the central limit theorem for the moment generating function) will be used several times.
\begin{lmm}\label{gaussianlmm}
Take any $a_i\in \R$, $i\in B_n$. Let $\theta>0$ be a constant such that $|a_i|\le \theta/2$ for all $i$, and  $\E(e^{\theta |J_0|})<\infty$. Then
\begin{align*}
\biggl|\E \biggl[\exp\biggl(\frac{1}{\sqrt{|B_n|}}\sum_{i\in B_n} a_i J_i\biggr)\biggr] - \exp\biggl(\frac{1}{2|B_n|}\sum_{i\in B_n} a_i^2 \biggr)\biggr| &\le \frac{C}{|B_n|^{\frac{3}{2}}} \sum_{i\in B_n} |a_i|^3, 
\end{align*}
where $C$ is a positive constant that depends only on the law of the $J_i$'s and the choice of $\theta$. 
\end{lmm}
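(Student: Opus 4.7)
The plan is to factor both sides using independence of the $J_i$, apply a one-variable Taylor expansion of $e^x$ to each factor, and combine the factorwise errors via a telescoping product identity. Specifically, by independence I can write
\[
\E\biggl[\exp\biggl(\frac{1}{\sqrt{|B_n|}}\sum_{i\in B_n} a_i J_i\biggr)\biggr] = \prod_{i\in B_n} E_i, \qquad E_i := \E\bigl[e^{a_i J_i/\sqrt{|B_n|}}\bigr],
\]
while the target product factors as $\prod_{i\in B_n} F_i$ with $F_i := \exp(a_i^2/(2|B_n|))$.

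For the factorwise estimate, I would use Taylor's theorem with remainder to write
\[
e^{a_i J_i/\sqrt{|B_n|}} = 1 + \frac{a_iJ_i}{\sqrt{|B_n|}} + \frac{(a_iJ_i)^2}{2|B_n|} + R_i,\qquad |R_i| \le \frac{|a_i J_i|^3}{6|B_n|^{3/2}}\, e^{|a_i J_i|/\sqrt{|B_n|}}.
\]
Taking expectations and using $\E J_i = 0$, $\E J_i^2 = 1$ yields $E_i = 1 + a_i^2/(2|B_n|) + \E R_i$. The hypothesis $|a_i|\le \theta/2$ bounds the exponential factor in $R_i$ by $e^{\theta|J_i|/2}$, and the elementary inequality $x^3 \le C_\theta\, e^{\theta x/2}$ together with $\E[e^{\theta|J_0|}]<\infty$ ensures that $\E[|J_0|^3 e^{\theta|J_0|/2}]$ is a finite constant depending only on $\theta$ and the law. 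Hence $|\E R_i| \le C|a_i|^3/|B_n|^{3/2}$. An elementary expansion also gives $F_i = 1 + a_i^2/(2|B_n|) + O(a_i^4/|B_n|^2)$, and since $a_i^4/|B_n|^2 \le (\theta/2)\,|a_i|^3/|B_n|^{3/2}$, the quartic error is of the same order. Combining,
\[
|E_i - F_i| \le C_1\, |a_i|^3/|B_n|^{3/2}.
\]

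To pass to the product, I would fix any ordering of $B_n$ and invoke the telescoping identity
\[
\prod_i E_i - \prod_i F_i = \sum_i (E_i - F_i) \prod_{j<i} E_j \prod_{j>i} F_j.
\]
The factorwise expansion above, together with $|a_i|\le \theta/2$, gives $E_i, F_i \le \exp(C_2 a_i^2/|B_n|)$, so the intermediate products satisfy
\[
\prod_{j\ne i} \exp\bigl(C_2 a_j^2/|B_n|\bigr) \le \exp\bigl(C_2 (\theta/2)^2\bigr),
\]
a constant depending only on $\theta$. Substituting the per-factor estimate yields $|\prod_i E_i - \prod_i F_i| \le C \sum_i |a_i|^3/|B_n|^{3/2}$, which is the claim.

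The main obstacle is calibration rather than concept: I must ensure that the intermediate products in the telescoping identity remain bounded uniformly in $n$. This is exactly where the hypothesis $|a_i|\le \theta/2$ is used twice over, once to control the tail of $R_i$ through the exponential moment assumption on $J_0$, and once to bound $\sum_i a_i^2 \le |B_n|(\theta/2)^2$ so that the telescoped factor stays $O(1)$ as $|B_n|\to\infty$. Everything else reduces to a standard second-order Taylor expansion.
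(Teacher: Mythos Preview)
Your proposal is correct and follows essentially the same approach as the paper: factor by independence, bound each factor via a second-order Taylor expansion of $e^x$ with third-order remainder controlled through the exponential-moment hypothesis, and combine via the telescoping product identity with a uniform $O(1)$ bound on the intermediate products. The only cosmetic difference is that you bound the remainder using Lagrange's form $|R_i|\le \tfrac{|x|^3}{6}e^{|x|}$ together with $x^3\le C_\theta e^{\theta x/2}$, whereas the paper sums the full tail series termwise using the moment bound $\E|J_0|^k\le Ck!/\theta^k$; both routes yield the same $C|a_i|^3/|B_n|^{3/2}$ estimate.
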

\begin{proof}
Take any $\theta$ as in the statement of the theorem. We will let $C, C_1,C_2,\ldots$ denote any positive constants whose values depend only on $d$, on the law of the $J_i$'s and on the choice of $\theta$, and whose values may change from line to line. First, note that for any $k$,
\begin{align}\label{momentbd}
\E|J_0|^k &\le \frac{k!}{\theta^k} \E(e^{\theta|J_0|}) \le \frac{Ck!}{\theta^k}.
\end{align}
By the above inequality and the facts that $\E(J_0)=0$, $\E(J_0^2)=1$,  we get that for any $i$,
\begin{align}\label{exp1}
\E\bigg[\exp\biggl(\frac{a_i J_i}{\sqrt{|B_n|}}\biggr)\biggr]  &= 1 + \frac{a_i^2}{2|B_n|} + R_i, 
\end{align}
where
\[
R_i :=  \sum_{k=3}^\infty \E\biggl(\frac{a_i^k J_i^k}{k!|B_n|^{\frac{k}{2}}}\biggr).
\]
Note that by \eqref{momentbd} and the fact that $|a_i|\le \theta/2$ for all $i$, 
\begin{align}\label{exprem}
|R_i| &\le \sum_{k=3}^\infty\frac{|a_i|^k}{k! |B_n|^{\frac{k}{2}}}\E|J_i|^k \le \sum_{k=3}^\infty\frac{C_1|a_i|^3(\theta/2)^{k-3}}{|B_n|^{\frac{k}{2}}\theta^k}\le  \frac{C_2|a_i|^3 }{|B_n|^{\frac{3}{2}}}. 
\end{align}
Similarly, 
\begin{align}\label{exprem2}
\biggl|\exp\biggl(\frac{a_i^2}{2|B_n|}\biggr) - 1 - \frac{a_i^2}{2|B_n|}\biggr| &= \sum_{k=2}^\infty \frac{a_i^{2k}}{k! 2^k |B_n|^k} \le  \frac{Ca_i^4}{|B_n|^2}. 
\end{align}
Now, for any $N$, and any $x_1,\ldots, x_N, y_1,\ldots, y_N \in \R$, if we let $K$ be the maximum of $|x_i|$ and $|y_i|$ over all $i$, then
\begin{align}
\biggl|\prod_{i=1}^N x_i - \prod_{i=1}^N y_i\biggr| &\le \sum_{i=1}^{N} |x_1\cdots x_{i-1} y_{i}\cdots y_N - x_1\cdots x_i y_{i+1}\cdots y_N|\notag \\
&\le\sum_{i=1}^N K^{N-1}|x_i - y_i|. \label{momineq}
\end{align}
By \eqref{exp1}, \eqref{exprem}, and the inequalities $1+x\le e^x$ and $|a_i|\le \theta/2$, 
\begin{align*}
0\le \E \biggl[\exp\biggl(\frac{a_i J_i}{\sqrt{|B_n|}}\biggr)\biggr] \le e^{C/|B_n|}. 
\end{align*}
Similarly, by \eqref{exprem2},
\[
0\le \exp\biggl(\frac{a_i^2}{2|B_n|}\biggr) \le e^{C/|B_n|}.
\]
Thus,  by  \eqref{exp1}, \eqref{exprem}, \eqref{exprem2} and \eqref{momineq}, 
\begin{align*}
&\biggl|\E \biggl[\exp\biggl(\frac{1}{\sqrt{|B_n|}}\sum_{i\in B_n} a_i J_i\biggr)\biggr] - \exp\biggl(\frac{1}{2|B_n|}\sum_{i\in B_n} a_i^2 \biggr)\biggr| \\
&= \biggl|\prod_{i\in B_n}\E \biggl[\exp\biggl(\frac{a_i J_i}{\sqrt{|B_n|}}\biggr)\biggr] - \prod_{i\in B_n}\exp\biggl(\frac{a_i^2}{2|B_n|} \biggr)\biggr|\\
&\le (e^{C/|B_n|})^{|B_n|} \sum_{i\in B_n} \biggl|\E \biggl[\exp\biggl(\frac{a_i J_i}{\sqrt{|B_n|}}\biggr)\biggr] - \exp\biggl(\frac{a_i^2}{2|B_n|} \biggr)\biggr|\\
&\le \frac{C}{|B_n|^{\frac{3}{2}}} \sum_{i\in B_n} |a_i|^3.
\end{align*}
This completes the proof of the lemma. 
\end{proof}
\subsection{Proof of Theorem \ref{magthm}}
In this proof, $o(1)$ will denote any quantity, deterministic or random, whose absolute value can be bounded by a deterministic quantity depending only on $n$ (and the law of the $J_i$'s and our choices of $\beta$ and $d$) that tends to zero as $n\to\infty$. 
We begin with the derivation of the approximate formula for the quenched expectation of the magnetization. Let $X_n$ be defined as in \eqref{xndef},  
and define the random variable
\begin{align}\label{ldef}
L  = L(\sigma) := \frac{\beta h}{\sqrt{|B_n|}}\sum_{i\in B_n} J_i \sigma_i, 
\end{align}
where $\sigma$ is drawn from the Ising model on $B_n$ at inverse temperature $\beta$ and free boundary condition. Let $m$ and $R_{1,2}$ be the magnetization of $\sigma$ and the overlap between two configurations drawn independently from the Gibbs measure of the Ising model, respectively. Let $\beta$ and $q$ be as in Theorem \ref{diffthm}. The first step in the proof of Theorem \ref{magthm} is the following lemma.
\begin{lmm}\label{maglmm1}
Let $L$ be as above. Then  
\[
\lim_{n\to \infty} \E[(\smallavg{e^L}_0 - e^{\frac{1}{2}\beta^2 h^2(1-q)} \cosh X_n )^2 ]= 0.
\]
\end{lmm}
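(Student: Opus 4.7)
I would prove the claimed $L^2$ convergence by expanding the square and showing that the three quantities $\E\smallavg{e^L}_0^2$, $\E[\smallavg{e^L}_0\,\kappa_n]$, and $\E\kappa_n^2$ (writing $\kappa_n := e^{\frac{1}{2}\beta^2 h^2(1-q)}\cosh X_n$ for brevity) all converge to the same limit $e^{\beta^2 h^2}\cosh(\beta^2 h^2 q)$ as $n\to\infty$, so that the $L^2$ distance squared vanishes. In each case the strategy is identical: interchange the expectation over $J$ with the Ising average $\smallavg{\cdot}_0$ (which is deterministic because $h=0$), apply Lemma \ref{gaussianlmm} uniformly in the spins (which is automatic since the relevant coefficients $a_i$ are deterministically bounded by a constant multiple of $|\beta h|$, contributing an error of order $|B_n|^{-1/2}$), and recognize what remains in terms of the magnetization $m$ or the overlap $R_{1,2}$.

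Concretely: for the first term, writing $\smallavg{e^L}_0^2 = \smallavg{e^{L(\sigma^1)+L(\sigma^2)}}_0$ and using $(\sigma_i^1+\sigma_i^2)^2 = 2+2\sigma_i^1\sigma_i^2$ produces
\[
\E\smallavg{e^L}_0^2 = e^{\beta^2 h^2}\,\smallavg{e^{\beta^2 h^2 R_{1,2}}}_0 + o(1).
\]
For the cross term, splitting $\cosh X_n = \tfrac{1}{2}(e^{X_n}+e^{-X_n})$ and merging with $e^L$ inside the $J$-expectation yields coefficients $\beta h(\sigma_i\pm\sqrt q)$ in Lemma \ref{gaussianlmm}; the identity $(\sigma_i\pm\sqrt q)^2 = 1+q\pm 2\sqrt q\,\sigma_i$ then gives
\[
\E[\smallavg{e^L}_0\cosh X_n] = e^{\beta^2 h^2(1+q)/2}\,\smallavg{\cosh(\beta^2 h^2\sqrt q\,m)}_0 + o(1).
\]
For the third term, Lemma \ref{gaussianlmm} applied directly to $2X_n$ gives $\E\cosh(2X_n)\to e^{2q\beta^2 h^2}$, and a short computation produces $\E\kappa_n^2\to e^{\beta^2 h^2}\cosh(\beta^2 h^2 q)$.

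The remaining ingredient is that $\smallavg{e^{\beta^2 h^2 R_{1,2}}}_0\to\cosh(\beta^2 h^2 q)$ and $\smallavg{\cosh(\beta^2 h^2\sqrt q\,m)}_0\to\cosh(\beta^2 h^2 q)$. Both follow by combining Theorem \ref{diffthm2} --- which gives $|R_{1,2}|\to q$ and $|m|\to\sqrt q$ in mean square under $\smallavg{\cdot}_0$ --- with the $\sigma\mapsto-\sigma$ symmetry of the $h=0$ Ising model, which forces the laws of $R_{1,2}$ and $m$ under $\smallavg{\cdot}_0$ to be symmetric around $0$. Together these imply weak convergence of the laws of $R_{1,2}$ and $m$ to the uniform distributions on $\{\pm q\}$ and $\{\pm\sqrt q\}$ respectively, and bounded convergence delivers the stated limits. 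Multiplying the cross-term limit by $e^{\frac{1}{2}\beta^2 h^2(1-q)}$ collapses the prefactor into $e^{\beta^2 h^2}$, matching the other two terms.

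The main obstacle is really just the bookkeeping in Lemma \ref{gaussianlmm}: one must check that its error is uniform in the spin configurations, which is automatic here since $|\sigma_i^1+\sigma_i^2|\le 2$ and $|\sigma_i\pm\sqrt q|\le 1+\sqrt q$, so each application contributes an error of order $|B_n|^{-1/2}$. Beyond this and the mean-square-plus-symmetry deduction of the limits of $\smallavg{e^{cR_{1,2}}}_0$ and $\smallavg{\cosh(cm)}_0$, the argument is a routine Gaussian moment computation combined with the content of Theorem \ref{diffthm2}.
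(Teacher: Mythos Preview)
Your proposal is correct and follows essentially the same route as the paper's proof: expand the square, use Lemma~\ref{gaussianlmm} to reduce each of the three terms to an Ising-model expectation involving $R_{1,2}$ or $m$, and then invoke the concentration of $R_{1,2}^2$ and $m^2$ from Theorem~\ref{diffthm2} together with the $\sigma\mapsto-\sigma$ symmetry (which the paper packages as Corollary~\ref{diffcor}) to identify all three limits as $e^{\beta^2 h^2}\cosh(\beta^2 h^2 q)$. The intermediate expressions you obtain for the cross term and the $\E\kappa_n^2$ term match the paper's computations exactly.
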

\begin{proof}
Note that by Lemma \ref{gaussianlmm}, 
\begin{align}
\E\smallavg{e^L}_0^2 &= \E\smallavg{e^{L(\sigma^1) + L(\sigma^2)}}_0\notag \\
&= \E \bigavg{\exp\biggl(\frac{\beta h}{\sqrt{|B_n|}}\sum_{i\in B_n} J_i(\sigma_i^1 + \sigma_i^2)\biggr)}_0\notag \\
&=\bigavg{\exp\biggl(\frac{\beta^2h^2}{2|B_n|}\sum_{i\in B_n} (\sigma_i^1 + \sigma_i^2)^2\biggr) + o(1)}_0\notag \\
&= e^{\beta^2h^2} \smallavg{e^{\beta^2 h^2 R_{1,2}}}_0 + o(1). \label{submain1}
\end{align}
By Corollary \ref{diffcor}, this shows that
\begin{align}\label{main1}
\lim_{n\to \infty} \E\smallavg{e^L}_0^2 &= e^{\beta^2 h^2}\cosh(\beta^2 h^2 q). 
\end{align}
Next, again by Lemma \ref{gaussianlmm}, 
\begin{align}
e^{\beta^2 h^2(1-q)}  \E \cosh^2X_n &= \frac{1}{4}e^{\beta^2 h^2(1-q)} \E(e^{2X_n} + e^{-2X_n} + 2) \notag \\
&=  \frac{1}{2}e^{\beta^2 h^2(1-q)}(e^{2\beta^2 h^2 q} + 1) + o(1)\notag \\
&= e^{\beta^2 h^2} \cosh(\beta^2 h^2 q) + o(1). \label{main2}
\end{align}
Finally, by another application of Lemma \ref{gaussianlmm}, 
\begin{align}
&e^{\frac{1}{2}\beta^2 h^2(1-q)}\E[\smallavg{e^L}_0 \cosh X_n] = \frac{1}{2}e^{\frac{1}{2}\beta^2 h^2(1-q)} \E[\smallavg{e^{L + X_n}}_0 + \smallavg{e^{L - X_n}}_0]\notag \\
&=  \frac{1}{2}e^{\frac{1}{2}\beta^2 h^2(1-q)} \E\biggl[\bigavg{\exp\biggl(\frac{\beta h}{\sqrt{|B_n|}}\sum_{i\in B_n} J_i(\sigma_i + \sqrt{q})\biggr)}_0\notag \\
&\qquad \qquad + \bigavg{\exp\biggl(\frac{\beta h}{\sqrt{|B_n|}}\sum_{i\in B_n} J_i(\sigma_i -  \sqrt{q})\biggr)}_0\biggr]\notag \\ 
&= \frac{1}{2}e^{\frac{1}{2}\beta^2 h^2(1-q)} \biggl[\bigavg{\exp\biggl(\frac{\beta^2 h^2}{2|B_n|}\sum_{i\in B_n} (\sigma_i +  \sqrt{q})^2\biggr)}_0 \notag \\
&\qquad \qquad + \bigavg{\exp\biggl(\frac{\beta^2 h^2}{2|B_n|}\sum_{i\in B_n} (\sigma_i - \sqrt{q})^2\biggr)}_0 + o(1)\biggr]\notag \\
&= \frac{1}{2} e^{\beta^2 h^2} [\smallavg{e^{\beta^2 h^2\sqrt{q} m}}_0 + \smallavg{e^{-\beta^2 h^2\sqrt{q}m}}_0] + o(1). \label{submain3}
\end{align}
But, by Corollary \ref{diffcor},
\begin{align*}
\lim_{n\to \infty} \smallavg{e^{\beta^2 h^2\sqrt{q} m}}_0 = \lim_{n\to \infty} \smallavg{e^{- \beta^2 h^2\sqrt{q} m}}_0 = \cosh(\beta^2 h^2 q).
\end{align*}
Thus,
\begin{align}\label{main3}
\lim_{n\to \infty} e^{\frac{1}{2}\beta^2 h^2(1-q)}\E[\smallavg{e^L}_0 \cosh X_n]  = e^{\beta^2 h^2} \cosh(\beta^2 h^2 q). 
\end{align}
Combining \eqref{main1}, \eqref{main2} and \eqref{main3}, we get
\begin{align*}
&\lim_{n\to \infty} \E[(\smallavg{e^L}_0 - e^{\frac{1}{2}\beta^2 h^2(1-q)} \cosh X_n)^2 ] \\
&= \lim_{n\to \infty} \E[\smallavg{e^L}_0^2 - 2 e^{\frac{1}{2}\beta^2 h^2(1-q)} \smallavg{e^L}_0\cosh X_n +  e^{\beta^2 h^2(1-q)} \cosh^2 X_n] = 0.
\end{align*}
This completes the proof of the lemma.
\end{proof}

The next step in the proof of Theorem \ref{magthm} is the following lemma.
\begin{lmm}\label{maglmm2}
Let $L$ be as above. Then 
\[
\lim_{n\to \infty} \E[(\smallavg{me^L}_0 - \sqrt{q}e^{\frac{1}{2}\beta^2h^2(1-q)}\sinh X_n)^2] = 0.
\]
\end{lmm}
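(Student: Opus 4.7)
The plan is to mirror the proof of Lemma \ref{maglmm1} exactly. Expanding the square,
\begin{align*}
\E[(\smallavg{me^L}_0 - \sqrt{q}e^{\frac{1}{2}\beta^2 h^2(1-q)}\sinh X_n)^2]
\end{align*}
splits into three pieces: $\E\smallavg{me^L}_0^2$, the square term $q e^{\beta^2 h^2(1-q)}\E\sinh^2 X_n$, and the cross term $-2\sqrt{q}e^{\frac{1}{2}\beta^2 h^2(1-q)}\E[\smallavg{me^L}_0\sinh X_n]$. I would show that each summand has a limit as $n\to\infty$, and verify that the three limits cancel. The inputs are the same as before: Lemma \ref{gaussianlmm} to carry out the Gaussian averages over the disorder $J$, together with Theorem \ref{diffthmmain} and Corollary \ref{diffcor} to evaluate the resulting quenched averages under $\smallavg{\cdot}_0$.

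For the first summand, I would introduce an independent replica $\sigma^2$, so that $\E\smallavg{me^L}_0^2 = \E\smallavg{m(\sigma^1)m(\sigma^2)e^{L(\sigma^1)+L(\sigma^2)}}_0$, and apply Lemma \ref{gaussianlmm} with $a_i = \beta h(\sigma_i^1+\sigma_i^2)$. Using $(\sigma_i^1+\sigma_i^2)^2 = 2+2\sigma_i^1\sigma_i^2$ this reduces to
\begin{align*}
\E\smallavg{me^L}_0^2 = e^{\beta^2 h^2}\smallavg{m(\sigma^1)m(\sigma^2)e^{\beta^2 h^2 R_{1,2}}}_0 + o(1).
\end{align*}
To evaluate the remaining quenched average, I would Taylor expand the exponential: by replica independence,
\begin{align*}
\smallavg{m(\sigma^1)m(\sigma^2) R_{1,2}^k}_0 = \frac{1}{|B_n|^{k+2}}\sum \smallavg{\sigma_i \sigma_{i_1}\cdots\sigma_{i_k}}_0\,\smallavg{\sigma_j \sigma_{i'_1}\cdots\sigma_{i'_k}}_0,
\end{align*}
and by Theorem \ref{diffthmmain} this tends to $q^{k+1}$ when $k$ is odd and to $0$ when $k$ is even. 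Since $|R_{1,2}|\le 1$, dominated convergence justifies exchanging the sum and limit, yielding $\smallavg{mm'e^{\beta^2 h^2 R_{1,2}}}_0 \to q\sinh(\beta^2 h^2 q)$ and hence $\E\smallavg{me^L}_0^2 \to q e^{\beta^2 h^2}\sinh(\beta^2 h^2 q)$. For the square term, a direct application of Lemma \ref{gaussianlmm} to $e^{\pm 2X_n}$ gives $\E\sinh^2 X_n \to \tfrac{1}{2}(e^{2q\beta^2 h^2}-1)$, so multiplication by $q e^{\beta^2 h^2(1-q)}$ produces the same value $q e^{\beta^2 h^2}\sinh(\beta^2 h^2 q)$.

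For the cross term, I would write $\sinh X_n = \tfrac{1}{2}(e^{X_n}-e^{-X_n})$ and observe that $L(\sigma)\pm X_n = (\beta h/\sqrt{|B_n|})\sum_i J_i(\sigma_i \pm \sqrt q)$. Applying Lemma \ref{gaussianlmm} to each piece and using the identity $(\sigma_i\pm\sqrt q)^2 = 1+q\pm 2\sqrt q\,\sigma_i$ reduces the cross term to
\begin{align*}
-2\sqrt q \, e^{\beta^2 h^2}\smallavg{m\sinh(\beta^2 h^2\sqrt q\, m)}_0 + o(1).
\end{align*}
The function $x\mapsto x\sinh(\beta^2 h^2\sqrt q\, x)$ is continuous and bounded on $[-1,1]$ and takes the common value $\sqrt q\sinh(\beta^2 h^2 q)$ at $x=\pm\sqrt q$, so Corollary \ref{diffcor} gives $\smallavg{m\sinh(\beta^2 h^2\sqrt q\, m)}_0 \to \sqrt q\sinh(\beta^2 h^2 q)$, and the cross term converges to $-2q e^{\beta^2 h^2}\sinh(\beta^2 h^2 q)$. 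Adding the three limits gives $q e^{\beta^2 h^2}\sinh(\beta^2 h^2 q)+q e^{\beta^2 h^2}\sinh(\beta^2 h^2 q)-2q e^{\beta^2 h^2}\sinh(\beta^2 h^2 q)=0$, which is the desired conclusion. There is no real obstacle here beyond careful bookkeeping; every ingredient already appears either in the proof of Lemma \ref{maglmm1} or in the preceding section, and the only subtlety is justifying the termwise use of Theorem \ref{diffthmmain} in the Taylor series, which follows from the uniform bound $|R_{1,2}|\le 1$.
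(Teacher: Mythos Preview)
Your proof is correct. The strategy---expand the square, use Lemma \ref{gaussianlmm} on each piece, and evaluate the residual $\smallavg{\cdot}_0$-averages---is the same as the paper's, but the execution differs in one structural respect worth noting.

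The paper does \emph{not} work with $\smallavg{me^L}_0$ directly. Instead it fixes a site $j$, computes
\[
\E\smallavg{\sigma_j e^L}_0^2 = e^{\beta^2h^2}\smallavg{\sigma_j^1\sigma_j^2 e^{\beta^2h^2 R_{1,2}}}_0 + o(1),
\]
averages over $j$ to reduce the first summand to $e^{\beta^2h^2}\smallavg{R_{1,2}e^{\beta^2h^2 R_{1,2}}}_0$, and handles this by Corollary~\ref{diffcor} alone (no Taylor expansion, since the integrand is now a function of $R_{1,2}$ only). It then proves the stronger averaged statement
\[
\frac{1}{|B_n|}\sum_{j\in B_n}\E\bigl[(\smallavg{\sigma_j e^L}_0 - \sqrt{q}\,e^{\frac12\beta^2h^2(1-q)}\sinh X_n)^2\bigr]\to 0
\]
(equation \eqref{neweq}) and deduces the lemma by Cauchy--Schwarz. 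The payoff is that \eqref{neweq} is reused verbatim later in the proof of Theorem \ref{magthm} to establish \eqref{mformgen}. Your direct approach produces $\smallavg{m(\sigma^1)m(\sigma^2)e^{\beta^2h^2 R_{1,2}}}_0$, which is a function of three variables rather than one; your Taylor-expansion-plus-Theorem~\ref{diffthmmain} argument handles this correctly (one can check the shared indices in the product $\smallavg{\sigma_i\sigma_{i_1}\cdots\sigma_{i_k}}_0\smallavg{\sigma_j\sigma_{i_1}\cdots\sigma_{i_k}}_0$ cause no trouble, via $|ab-c^2|\le |a-c|+|b-c|$ for $|a|,|b|,|c|\le 1$), but is a bit more work than the paper's route, and does not yield \eqref{neweq} as a byproduct.

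One cosmetic point: in your displayed formula the primed indices $i_1',\dots,i_k'$ should really be the \emph{same} indices $i_1,\dots,i_k$ (they come from a single copy of $R_{1,2}^k$), as the normalization $|B_n|^{-(k+2)}$ already makes clear; you may want to clean up that notation.
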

\begin{proof}
Take any $j\in B_n$. By a computation similar to the one that led to equation \eqref{submain1}, we get
\begin{align*}
\E\smallavg{\sigma_j e^L}_0^2 &= e^{\beta^2 h^2} \smallavg{\sigma_j^1 \sigma_j^2 e^{\beta^2 h^2R_{1,2}}}_0 + o(1). 
\end{align*}
Averaging over $j$, we get
\begin{align*}
\frac{1}{|B_n|} \sum_{j\in B_n} \E\smallavg{\sigma_j e^L}_0^2 &= e^{\beta^2 h^2} \smallavg{R_{1,2} e^{\beta^2 h^2R_{1,2}}}_0 + o(1).
\end{align*}
By Corollary \ref{diffcor}, this shows that
\begin{align}\label{main21}
\lim_{n\to\infty} \frac{1}{|B_n|} \sum_{j\in B_n} \E\smallavg{\sigma_j e^L}_0^2 &= q e^{\beta^2 h^2}\sinh(\beta^2 h^2q). 
\end{align}
Next, by a computation similar to the one that led to equation \eqref{main2}, we get
\begin{align}\label{main22}
qe^{\beta^2h^2(1-q)}\E\sinh^2 X_n &= q e^{\beta^2 h^2}\sinh(\beta^2h^2 q) + o(1).
\end{align}
Finally, by a computation similar to the one that led to equation \eqref{submain3}, we get
\begin{align*}
&\sqrt{q}e^{\frac{1}{2}\beta^2h^2(1-q)}\E[\smallavg{\sigma_j e^L}_0 \sinh X_n] \\
&= \frac{\sqrt{q}}{2} e^{\beta^2h^2}[\smallavg{\sigma_je^{\beta^2 h^2\sqrt{q} m}}_0 - \smallavg{\sigma_je^{-\beta^2 h^2\sqrt{q}m}}_0] + o(1).
\end{align*}
Averaging this over $j$, we get
\begin{align*}
\frac{1}{|B_n|}\sum_{j\in B_n}\sqrt{q} e^{\frac{1}{2}\beta^2 h^2(1-q)}\E[\smallavg{\sigma_j e^L}_0 \sinh X_n] &= \sqrt{q} e^{\beta^2 h^2} \smallavg{m\sinh(\beta^2 h^2 \sqrt{q} m)}_0 + o(1). 
\end{align*}
By Corollary \ref{diffcor}, this shows that
\begin{align}\label{main23}
\lim_{n\to\infty} \frac{1}{|B_n|}\sum_{j\in B_n}\sqrt{q}e^{\frac{1}{2}\beta^2 h^2(1-q)}\E[\smallavg{\sigma_j e^L}_0 \sinh X_n] &= qe^{\beta^2 h^2} \sinh(\beta^2 h^2 q). 
\end{align}
Combining \eqref{main21}, \eqref{main22} and \eqref{main23}, we get
\begin{align}\label{neweq}
\lim_{n\to\infty} \frac{1}{|B_n|} \sum_{j\in B_n} \E[(\smallavg{\sigma_j e^L}_0 - \sqrt{q}e^{\frac{1}{2}\beta^2h^2(1-q)}\sinh X_n)^2] = 0.
\end{align}
An application of the Cauchy--Schwarz inequality shows that the above quantity is an upper bound for the quantity that we want to show is converging to zero, thereby completing the proof.
\end{proof}

The final ingredient we need is the following.
\begin{lmm}\label{maglmm3}
Let $\beta$ be as in Theorem \ref{diffthm}. Then  $\smallavg{(R_{1,2} - m(\sigma^1)m(\sigma^2))^2}_0 \to 0$ as $n\to\infty$, where $\sigma^1$ and $\sigma^2$ are drawn independently from the Ising model on $B_n$ at inverse temperature $\beta$ and free boundary condition. 
\end{lmm}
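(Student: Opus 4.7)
The plan is to expand the square and verify that each of the three resulting terms tends to $q^2$, so that they combine to give $q^2 - 2q^2 + q^2 = 0$. Writing
\[
\smallavg{(R_{1,2} - m(\sigma^1)m(\sigma^2))^2}_0 = \smallavg{R_{1,2}^2}_0 - 2\smallavg{R_{1,2}\,m(\sigma^1)m(\sigma^2)}_0 + \smallavg{m(\sigma^1)^2 m(\sigma^2)^2}_0,
\]
Theorem \ref{diffthm2} handles the first term: $\smallavg{R_{1,2}^2}_0 \to q^2$. For the third term, the independence of the two replicas under $\smallavg{\cdot}_0$ gives $\smallavg{m(\sigma^1)^2 m(\sigma^2)^2}_0 = \smallavg{m^2}_0^2$, which also converges to $q^2$ by Theorem \ref{diffthm2}.

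The cross term is the interesting piece. Expanding in coordinates and using independence of the replicas,
\[
\smallavg{R_{1,2}\,m(\sigma^1)m(\sigma^2)}_0 = \frac{1}{|B_n|^3} \sum_{i,j,k\in B_n} \smallavg{\sigma_i\sigma_j}_0 \smallavg{\sigma_i\sigma_k}_0 = \frac{1}{|B_n|} \sum_{i\in B_n} \biggl(\frac{1}{|B_n|} \sum_{j\in B_n} \smallavg{\sigma_i\sigma_j}_0\biggr)^2.
\]
Setting $a_i := \frac{1}{|B_n|}\sum_{j\in B_n} \smallavg{\sigma_i\sigma_j}_0 - q$, the $l=2$ case of Theorem \ref{diffthmmain} gives
\[
\frac{1}{|B_n|}\sum_{i\in B_n} |a_i| \le \frac{1}{|B_n|^2}\sum_{i,j\in B_n}|\smallavg{\sigma_i\sigma_j}_0 - q| \to 0,
\]
and since $|a_i|\le 2$, also $\frac{1}{|B_n|}\sum_{i\in B_n} a_i^2 \to 0$. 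Expanding $(q+a_i)^2$ then shows that the cross term tends to $q^2$.

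Putting the three pieces together completes the proof. The only substantive ingredient is the cross term, which is handled directly by the uniformity of two-point correlations from Theorem \ref{diffthmmain}; once that tool is in hand, nothing nontrivial remains, so this lemma is really a corollary of the previous section's analysis.
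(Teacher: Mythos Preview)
Your proof is correct and follows essentially the same approach as the paper: expand the square, use Theorem~\ref{diffthm2} (or its Corollary~\ref{diffcor}) for the first and third terms, and reduce the cross term to the average of $\smallavg{\sigma_i\sigma_j}_0\smallavg{\sigma_i\sigma_k}_0$. Your treatment of the cross term via Theorem~\ref{diffthmmain} and the averages $a_i$ is a clean explicit version of what the paper leaves as ``using the same tactics as in the proof of Theorem~\ref{diffthm2}''.
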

\begin{proof}
Note that 
\begin{align*}
\smallavg{(R_{1,2} - m(\sigma^1)m(\sigma^2))^2}_0 &= \smallavg{R_{1,2}^2}_0 - 2\smallavg{R_{1,2}m(\sigma^1)m(\sigma^2)}_0 + \smallavg{m^2}_0^2. 
\end{align*}
By Corollary \ref{diffcor}, $\smallavg{R_{1,2}^2}_0 \to q^2$ and $\smallavg{m^2}_0 \to q$ as $n\to \infty$. Now, note that
\begin{align*}
\smallavg{R_{1,2}m(\sigma^1)m(\sigma^2)}_0 &= \bigavg{\frac{1}{|B_n|^3}\sum_{i,j,k\in B_n} \sigma_i^1 \sigma_i^2 \sigma_j^1 \sigma_k^2}_0\\
&= \frac{1}{|B_n|^3} \sum_{i,j,k\in B_n} \smallavg{\sigma_i\sigma_j}_0 \smallavg{\sigma_i\sigma_k}_0. 
\end{align*}
Using the same tactics as in the proof of Theorem \ref{diffthm2}, it is now easy to show that the above quantity tends to $q^2$ as $n\to \infty$. This completes the proof. 
\end{proof}

We are now ready to complete the proof of Theorem \ref{magthm}.
\begin{proof}[Proof of Theorem \ref{magthm}]
 First, note that by Jensen's inequality,
\begin{align}\label{jensen}
\smallavg{e^L}_0 &\ge e^{\smallavg{L}_0} = 1. 
\end{align}
Thus, by \eqref{jensen} and the Cauchy--Schwarz inequality,  
\begin{align*}
\smallavg{|m^2 -q|} &= \frac{\smallavg{|m^2 -q| e^L}_0}{\smallavg{e^L}_0} \\
&\le \smallavg{|m^2 -q| e^L}_0 \le \sqrt{\smallavg{(m^2-q)^2}_0\smallavg{e^{2L}}_0}. 
\end{align*}
Taking expectation on both sides and applying Jensen's inequality, we get
\[
\ee(\smallavg{|m^2 -q|}) \le \sqrt{\smallavg{(m^2-q)^2}_0\ee(\smallavg{e^{2L}}_0)}.
\]
By Theorem \ref{diffthm2}, the first term within the square-root tends to zero as $n\to\infty$. By Lemma \ref{gaussianlmm}, the second term is uniformly bounded in $n$. Thus, $\ee(\smallavg{|m^2 -q|}) \to 0$ as $n\to\infty$. Since 
\[
\smallavg{(m^2 -q)^2} \le 2\smallavg{|m^2 -q|},
\]
this proves the first claim of the theorem. Next, note that 
\begin{align*}
\smallavg{m} &= \frac{\smallavg{me^L}_0}{\smallavg{e^L}_0}. 
\end{align*}
Thus, if we let
\[
a := \sqrt{q} e^{\frac{1}{2}\beta^2h^2(1-q)}\sinh X_n, \ \ b := e^{\frac{1}{2}\beta^2h^2(1-q)} \cosh X_n, \ \ c := \frac{a}{b} = \sqrt{q} \tanh X_n,
\]
then by \eqref{jensen}, 
\begin{align*}
|\smallavg{m}  - c| &= \biggl| \frac{\smallavg{me^L}_0}{\smallavg{e^L}_0} - \frac{a}{b}\biggr|= \frac{|b \smallavg{me^L}_0 - a\smallavg{e^L}_0|}{b \smallavg{e^L}_0} \\
&\le\frac{ |b \smallavg{me^L}_0 - a\smallavg{e^L}_0|}{b} \le |\smallavg{me^L}_0 - a| +\frac{ a}{b} |\smallavg{e^L}_0-b|. 
\end{align*}
Since $b\ge 1$, this shows that
\begin{align}
\E|\smallavg{m} - c| &\le \E|\smallavg{me^L}_0 - a| + \E(a |\smallavg{e^L}_0-b|)\notag \\
&\le \sqrt{\E[(\smallavg{me^L}_0 - a)^2]} + \sqrt{\E(a^2) \E[(\smallavg{e^L}_0-b)^2]}.\label{newderivation}
\end{align}
By Lemma \ref{maglmm1}, Lemma \ref{maglmm2}, and the fact that $\E(a^2)$ is uniformly bounded in $n$ (by Lemma \ref{gaussianlmm}), we get that the above quantity tends to zero as $n\to\infty$. But, since $\smallavg{m}$ and $c$ are both in $[-1,1]$, 
\[
\E[(\smallavg{m} - c)^2] \le 2 \E|\smallavg{m} - c|.
\]
This proves \eqref{mform}. To prove \eqref{mformgen}, note that by Lemma \ref{maglmm1} and the inequality \eqref{jensen},  proceeding as in the derivation of \eqref{newderivation}, we get
\begin{align*}
&\frac{1}{|B_n|} \sum_{j\in B_n} \E|\smallavg{\sigma_j} - \sqrt{q}\tanh X_n| = \frac{1}{|B_n|} \sum_{j\in B_n} \E\biggl|\frac{\smallavg{\sigma_je^L}_0}{\smallavg{e^L}_0} - \frac{a}{b}\biggr|\\
&= \frac{1}{|B_n|} \sum_{j\in B_n} \E\biggl(\frac{|b \smallavg{\sigma_je^L}_0 - a \smallavg{e^L}_0|}{b\smallavg{e^L}_0}\biggr)\\
&\le  \frac{1}{|B_n|} \sum_{j\in B_n} \E\biggl(\frac{|b \smallavg{\sigma_je^L}_0 - a \smallavg{e^L}_0|}{b}\biggr)\\
&\le  \sqrt{\E(a^2) \E[(\smallavg{e^L}_0-b)^2]} + \frac{1}{|B_n|} \sum_{j\in B_n}\sqrt{\E[(\smallavg{\sigma_je^L}_0 - a)^2]}.
\end{align*}
We have already seen that the first term tends to zero as $n\to\infty$. The second term is bounded above by 
\begin{align*}
\biggl[\frac{1}{|B_n|} \sum_{j\in B_n}\E[(\smallavg{\sigma_je^L}_0 - a)^2]\biggr]^{\frac{1}{2}}.
\end{align*}
By \eqref{neweq}, this also tends to zero as $n\to\infty$. Thus, 
\[
\lim_{n\to\infty}\frac{1}{|B_n|} \sum_{j\in B_n} \E|\smallavg{\sigma_j} - \sqrt{q}\tanh X_n| = 0. 
\]
Thus, by \eqref{mform} and the fact that
\[
(\smallavg{\sigma_j} - \sqrt{q}\tanh X_n)^2\le 2|\smallavg{\sigma_j} - \sqrt{q}\tanh X_n|,
\]
we get \eqref{mformgen}. Finally, note that by \eqref{jensen} and the Cauchy--Schwarz inequality,
\begin{align*}
\E\smallavg{|R_{1,2} - m(\sigma^1) m(\sigma^2)|} &= \E\biggl(\frac{\smallavg{|R_{1,2} - m(\sigma^1) m(\sigma^2)|e^{L(\sigma^1) + L(\sigma^2)}}_0}{\smallavg{e^L}_0^2}\biggr)\\
&\le \E\smallavg{|R_{1,2} - m(\sigma^1) m(\sigma^2)|e^{L(\sigma^1) + L(\sigma^2)}}_0\\
&\le \sqrt{\smallavg{(R_{1,2} - m(\sigma^1) m(\sigma^2))^2}_0\E\smallavg{e^{2L(\sigma^1) + 2L(\sigma^2)}}_0}
\end{align*}
By Lemma \ref{maglmm3}, the first term inside the square-root tends to zero as $n\to\infty$. The second term is uniformly bounded in $n$, by Lemma \ref{gaussianlmm}. This shows that the expression on the left tends to zero. But note that
\[
\E\smallavg{(R_{1,2} - m(\sigma^1) m(\sigma^2))^2} \le 2\E\smallavg{|R_{1,2} - m(\sigma^1) m(\sigma^2)|}. 
\]
This completes the proof of Theorem \ref{magthm}.
\end{proof}

\subsection{Proof of Theorem \ref{rsbthm}}
All assertions of Theorem \ref{rsbthm} are direct consequences of the properties of $m$ from Theorem \ref{magthm} and the result that $\E\smallavg{(R_{1,2} - m(\sigma^1) m(\sigma^2))^2} \to 0$ as $n\to\infty$.

\subsection{Proof of Theorem \ref{ultrathm}}
Let $A := \{-\sqrt{q},\sqrt{q}\}^3 \subseteq \R^3$, and let $B$ denote the set displayed in \eqref{qqq}. Consider the map $f: \R^3 \to \R^3$ defined as $f(x,y,z) := (xy, yz, zx)$. Then $f$ is a continuous map, and an easy verification shows that $f(A) = B$. (For example, $f(\sqrt{q},\sqrt{q},\sqrt{q}) = (q,q,q)$, $f(\sqrt{q},\sqrt{q},-\sqrt{q}) = (q, -q, -q)$, $f(\sqrt{q},-\sqrt{q},-\sqrt{q}) = (-q,q,-q)$, etc.) Take any open set $V\supseteq B$, and let $U=f^{-1}(V)$. Then $U$ is also open, and $U\supseteq A$. Let $\sigma^1, \sigma^2, \sigma^3$ be three configurations drawn independently from the Gibbs measure of our model, and define the overlaps as usual. By Theorem \ref{magthm}, the difference between the random vectors $(R_{1,2}, R_{2,3}, R_{3,1})$ and $f(m(\sigma^1), m(\sigma^2), m(\sigma^3))$ converges to the zero vector in $L^2$ (unconditionally, after integrating out the disorder).  This shows, first of all, that the quenched law of $(R_{1,2}, R_{2,3}, R_{3,1})$ converges in distribution, because so does the quenched law of $(m(\sigma^1), m(\sigma^2), m(\sigma^3))$. Next, note that by Theorem~\ref{magthm},
\[
\lim_{n\to\infty} \P((m(\sigma^1), m(\sigma^2), m(\sigma^3)) \in U) =1,
\]
where $\P$ denotes the unconditional probability, after integrating out the disorder. Thus,
\[
\lim_{n\to\infty} \P(f(m(\sigma^1), m(\sigma^2), m(\sigma^3)) \in V) =1. 
\]
Combining this with the previous observation, we see that for any open set $V\supseteq B$,
\[
\lim_{n\to\infty} \P((R_{1,2}, R_{2,3}, R_{3,1}) \in V) =1.
\]
This shows that the quenched probability of the event $(R_{1,2}, R_{2,3}, R_{3,1}) \in V$ converges to $1$ in probability. From this, it is easy to complete the proof of the theorem.

\subsection{Proof of Theorem \ref{ggthm}}
Taking $k=2$, $f= S_{1,2}$ and $\psi(x) = x$ in \eqref{ggid} gives the equation
\[
\E(S_{1,2}S_{1,3}) = \frac{1}{2}(\E(S_{1,2}))^2 + \frac{1}{2}\E(S_{1,2}^2).
\]
We will show that this equation fails for the overlap in the infinite volume limit of our model. Indeed, by Theorem \ref{magthm},
\begin{align}
&\lim_{n\to\infty} (2\E\smallavg{R_{1,2}R_{1,3}} - (\E\smallavg{R_{1,2}})^2 - \E\smallavg{R_{1,2}^2}) \notag \\
&= \lim_{n\to\infty} (2\E\smallavg{m(\sigma^1)^2m(\sigma^2)m(\sigma^3)} - (\E\smallavg{m(\sigma^1) m(\sigma^2)})^2 - \E\smallavg{m(\sigma^1)^2m(\sigma^2)^2})\notag \\
&=\lim_{n\to\infty} (2\E(\smallavg{m^2} \smallavg{m}^2) - (\E(\smallavg{m}^2))^2 - \E(\smallavg{m^2}^2)),\label{rmeq}
\end{align}
provided that the limits exist (which we will prove shortly). Let $X_n$ be defined as in \eqref{xndef}, and let $Y_n := \tanh X_n$. 
Then by Theorem~\ref{magthm}, the right side of \eqref{rmeq} equals
\begin{align*}
\lim_{n\to\infty} (2q^2\E(Y_n^2) - q^2(\E(Y_n^2))^2 - q^2) = - q^2 \lim_{n\to\infty} (1-\E(Y_n^2))^2.
\end{align*}
But $Y_n$ is a bounded random variable with converges in distribution to $\tanh(\sqrt{q}\beta h Z)$, where $Z$ is a standard Gaussian random variable. Thus, for any finite $h$, the above limit is nonzero. This completes the proof.

\subsection{Proof of Theorem \ref{antithm}}

In this subsection, we will denote averaging with respect to the antiferromagnetic Ising model on $B_n$ at inverse temperature $\beta$ and free boundary condition by $\smallavg{\cdot}_{a,0}$, and averaging with respect to the model on $B_n$ with Hamiltonian \eqref{antihamil} at inverse temperature $\beta$ by $\smallavg{\cdot}_a$. 

Let $\sigma$ be a configuration drawn from the ferromagnetic Ising model on $B_n$ at inverse temperature $\beta$ and free boundary condition. Define $\eta\in \Sigma_n$ as 
\begin{align}\label{sigmaeta}
\eta_i := (-1)^{|i|_1} \sigma_i \  \text{ for all $i\in B_n$.}
\end{align}
Then, it is easy to see that $\eta$ is drawn from antiferromagnetic Ising model on $B_n$ at inverse temperature $\beta$ and free boundary condition. Thus, we have 
\begin{align*}
\smallavg{m^2}_{a,0} &= \frac{1}{|B_n|^2}\sum_{i,j\in B_n } \smallavg{\sigma_i \sigma_j}_{a,0} = \frac{1}{|B_n|^2}\sum_{i,j\in B_n } (-1)^{|i|_1+|j|_1}\smallavg{\sigma_i \sigma_j}_0.
\end{align*}
Take any $\ve \in (0,1)$. Let $\delta_n$ be as in Theorem \ref{diffthm} and let $m := \lfloor (1-\ve)n \rfloor$.  Let  $S$ be defined as in equation \eqref{sdefin}, 
and let $S^c := (B_n \times B_n)\setminus S$. 
Then 
\begin{align*}
\frac{1}{|B_n|^2}\sum_{i,j\in B_n } (-1)^{|i|_1+|j|_1}(\smallavg{\sigma_i \sigma_j}_0-q) &\le \frac{|S^c|}{|B_n|^2} + \frac{|S|\delta_n}{|B_n|^2}\le \frac{|S^c|}{|B_n|^2} + \delta_n. 
\end{align*}
By Theorem \ref{diffthm}, $\delta_n \to 0$ as $n\to \infty$. Combining these observations and the upper bound \eqref{scbound}, we get
\begin{align*}
\limsup_{n\to\infty} \frac{1}{|B_n|^2}\sum_{i,j\in B_n } (-1)^{|i|_1+|j|_1}(\smallavg{\sigma_i \sigma_j}_0-q)&\le C\ve. 
\end{align*}
It is easy to see that 
\[
\lim_{n\to\infty} \frac{1}{|B_n|^2}\sum_{i,j\in B_n } (-1)^{|i|_1+|j|_1} = 0. 
\]
Combining all of the above, we get that $\limsup_{n\to\infty} \smallavg{m^2}_{a,0}  \le C\ve$. Since this holds for every $\ve \in (0,1)$, and $\smallavg{m^2}_{a,0}\ge0$, we conclude that $\smallavg{m^2}_{a,0}\to 0$ as $n\to \infty$. 

Now let $L$ be defined as in \eqref{ldef}. Then, as in \eqref{jensen}, we have $\smallavg{e^L}_{a,0} \ge e^{\smallavg{L}_{a,0}} = 1$. Thus, 
\begin{align*}
\E\smallavg{|m|}_a &= \E \biggl(\frac{\smallavg{|m| e^L}_{a,0}}{\smallavg{e^L}_{a,0}}\biggr) \le\E \smallavg{|m| e^L}_{a,0}\le \sqrt{\E\smallavg{m^2}_{a,0} \E \smallavg{e^L}_{a,0}}.
\end{align*}
We have shown above that the first term inside the square-root tends to zero as $n\to\infty$. By Lemma \ref{gaussianlmm} and the above relationship between $\eta$ and $\sigma$, the second term is uniformly bounded in $n$. Thus, $\E\smallavg{|m|}_a \to 0 $ as $n\to\infty$. Since $|m|\le 1$, this implies that $\E\smallavg{m^2}_a \to 0$.

Lastly, let $\sigma^1$ and $\sigma^2$ are configurations drawn independently from the model on $B_n$ with Hamiltonian \eqref{hamil} at inverse temperature $\beta$, but with $J_i$ replaced by $(-1)^{|i|_1} J_i$. Define $\eta^1$ and $\eta^2$ via the relationship \eqref{sigmaeta}. 
Then, it is easy to see that $\eta^1$ and $\eta^2$ are drawn independently from the model on $B_n$ with Hamiltonian \eqref{antihamil} at inverse temperature $\beta$. Moreover, the overlap between $\eta^1$ and $\eta^2$ is exactly the same as the overlap between $\sigma^1$ and $\sigma^2$. Thus, all of the claims about the overlap that we have proved for the ferromagnetic model continue to hold for the antiferromagnetic model, after replacing $J_i$ by $(-1)^{|i|_1}J_i$ in the theorem statements. 

 

\subsection{Proof of Theorem \ref{negthm}}
Let $F$ denote the free energy of our model. That is,
\[
F = \log \sum_{\sigma\in \Sigma_n} e^{-\beta H_n(\sigma)},
\]
with $H_n$ defined as in \eqref{hamil}. Then note that 
\begin{align*}
\fpar{F}{J_i} &= \frac{\beta h\smallavg{\sigma_i}}{\sqrt{|B_n|}}.
\end{align*}
This implies, by the Gaussian Poincar\'e inequality~\cite[p.~49]{ledoux01}, that 
\begin{align}\label{varup}
\var(F) \le \sum_{i\in B_n} \E\biggl[\biggl(\fpar{F}{J_i}\biggr)^2\biggr] \le \beta^2 h^2.
\end{align}
On the other hand,  
\[
\mpar{F}{J_i}{J_j} = \frac{\beta^2h^2}{|B_n|} (\sij - \si \sj),
\]
and therefore, by \cite[Theorem 3.1]{chatterjee18}, 
\begin{align}
\var(F) &\ge \frac{1}{2}\sum_{i,j\in B_n} \biggl[\E\biggl(\mpar{F}{J_i}{J_j}\biggr)\biggr]^2\notag \\
&= \frac{\beta^4h^4}{2|B_n|^2} \sum_{i,j\in B_n} [\E(\sij-\si\sj)]^2. \label{varlow}
\end{align}
By the FKG inequality for the RFIM~\cite[Lemma 2.5]{chatterjee15}, $\sij-\si\sj \ge 0$ for $i, j$. Thus,
\begin{align*}
\smallavg{R_{1,2}^2} - \smallavg{R_{1,2}}^2 &= \frac{1}{|B_n|^2} \sum_{i,j\in B_n} (\sij^2 - \si^2\sj^2)\\
&= \frac{1}{|B_n|^2} \sum_{i,j\in B_n} (\sij - \si\sj)(\sij+\si\sj)\\
&\le \frac{2}{|B_n|^2} \sum_{i,j\in B_n} |\sij - \si\sj| \\
&= \frac{2}{|B_n|^2} \sum_{i,j\in B_n} (\sij - \si\sj). 
\end{align*}
Combining this with \eqref{varlow}, we get
\begin{align*}
\E\smallavg{(R_{1,2} - \smallavg{R_{1,2}})^2} &= \E(\smallavg{R_{1,2}^2} - \smallavg{R_{1,2}}^2)\\
&\le \frac{2}{|B_n|^2} \sum_{i,j\in B_n} \E(\sij - \si\sj)\\
&\le 2\biggl[\frac{1}{|B_n|^2} \sum_{i,j\in B_n} (\E(\sij - \si\sj))^2\biggr]^{\frac{1}{2}}\\
&\le 2\biggl[\frac{2\var(F)}{\beta^4h^4}\biggr]^{\frac{1}{2}}. 
\end{align*}
Plugging in the upper bound on $\var(F)$ from \eqref{varup}, we get
\begin{align*}
\E\smallavg{(R_{1,2} - \smallavg{R_{1,2}})^2} &\le \frac{2^{\frac{3}{2}}}{\beta |h|}. 
\end{align*}
The upper bound tends to zero if $|h|\to\infty$ as $n\to \infty$. This proves the second claim of Theorem~\ref{negthm}. For the first claim, note that for any $k$,
\begin{align*}
\smallavg{R_{1,2}^k} &= \frac{\smallavg{R_{1,2}^k e^L}_0}{\smallavg{e^L}_0},
\end{align*}
where $L$ is the function defined in \eqref{ldef}. 
By \eqref{jensen}, this shows that
\begin{align*}
|\smallavg{R_{1,2}^k} - \smallavg{R_{1,2}^k}_0| &= \biggl|\frac{\smallavg{R_{1,2}^k e^L}_0}{\smallavg{e^L}_0} - \smallavg{R_{1,2}^k}_0\biggr|\\
&= \frac{|\smallavg{R_{1,2}^k e^L}_0 - \smallavg{R_{1,2}^k}_0\smallavg{e^L}_0|}{\smallavg{e^L}_0}\\
&\le |\smallavg{R_{1,2}^k e^L}_0 - \smallavg{R_{1,2}^k}_0\smallavg{e^L}_0|\\
&= |\smallavg{R_{1,2}^k (e^L-\smallavg{e^L}_0)}_0|\\
&\le \smallavg{|e^L - \smallavg{e^L}_0|}_0 \\
&=  \smallavg{|(e^L-1) - \smallavg{e^L-1}_0|}_0 \le 2\smallavg{|e^L-1|}_0.
\end{align*}
Now, note that for any given $\sigma\in \Sigma_n$, by Lemma \ref{gaussianlmm}, 
\begin{align*}
\E|e^{L(\sigma)} - 1| &\le \sqrt{\E[(e^{L(\sigma)} - 1)^2]}\\
&= \sqrt{\E[e^{2L(\sigma)} - 2e^{L(\sigma)} + 1]}\\
&= \sqrt{e^{2\beta^2 h^2} - 2e^{\frac{1}{2}\beta^2h^2} + 1 + o(1)}\\
&= \sqrt{(e^{2\beta^2 h^2} - 1) - 2(e^{\frac{1}{2}\beta^2h^2}  - 1) + o(1)}. 
\end{align*}
By the inequality $e^x - 1\le ex$ that holds for $0\le x\le 1$, we get that the above quantity is bounded above by $C\beta|h| + o(1)$ when $|h|\le \frac{1}{\beta}$, where $C$ is a universal constant. In particular, it tends to zero as $h\to 0$.  Thus, if $h\to 0$ as $n\to\infty$, then for every $k$,
\begin{align*}
\lim_{n\to\infty} \E|\smallavg{R_{1,2}^k}- \smallavg{R_{1,2}^k}_0| = 0.
\end{align*}
This shows that if $n$ is large, then all quenched moments of $R_{1,2}$ under our model are, with high probability, close to the corresponding moments of $R_{1,2}$ under the Ising model. From this, it is not hard to prove the claim stated in the theorem (e.g., using Bernstein approximation).

\subsection{Proof of Theorem \ref{purethm}}
By Theorem \ref{magthm} and Theorem \ref{rsbthm}, $\E\smallavg{(R_{1,2}^2 - q^2)^2} \to 0$ and $\E\smallavg{(m^2 - q)^2} \to 0$ for our model. This allows us to repeat the proof of Theorem \ref{diffthmmain} from Subsection \ref{diffproofsec} verbatim to deduce that the conclusion of Theorem \ref{diffthmmain} holds even if $h\ne 0$, with the same $q$. This shows, in particular, that if $\pi_n$ is a uniform random permutation of the elements of $B_n$, then for any even $l$, 
\begin{align}\label{evenl}
\smallavg{\sigma_{\pi_n(1)}\sigma_{\pi_n(2)}\cdots \sigma_{\pi_n(l)}} \to q^{\frac{l}{2}}
\end{align}
in probability as $n\to\infty$. Next, let us consider the case of odd $l$. For the Ising model, the above expectation is zero if $l$ is odd. This is no longer true if $h\ne 0$. Recall the random variable $X_n$ defined in equation \eqref{xndef}. Take any odd positive integer $l$. We claim that 
\begin{align}\label{oddl}
\smallavg{\sigma_{\pi_n(1)}\sigma_{\pi_n(2)}\cdots \sigma_{\pi_n(l)}} - q^{\frac{l}{2}}\tanh X_n \to 0
\end{align}
in probability as $n\to\infty$.  To prove this, let $m$ be the magnetization. We claim that 
\begin{align}\label{mlim}
\smallavg{m^l} - q^{\frac{l}{2}}\tanh X_n \to 0
\end{align}
in probability as $n\to\infty$. To see this, note that by Theorem \ref{magthm}, $\E\smallavg{(m^2 - q)^2} \to 0$. From this, it is easy to deduce that
\[
\smallavg{m^l} - q^{\frac{1}{2}(l-1)} \smallavg{m} \to 0
\]
in probability, since $m^{l-1}$ can be replaced by $q^{\frac{1}{2}(l-1)}$ asymptotically. But again by Theorem~\ref{magthm}, $\smallavg{m} - \sqrt{q}\tanh X_n \to 0$ in probability. Combining these two observations yields \eqref{mlim}. Next, we claim that 
\begin{align}\label{rlimit}
\smallavg{R_{1,2}^l} - q^l \tanh^2 X_n \to 0
\end{align}
in probability as $n\to\infty$. To see this, note that by Theorem \ref{magthm}, $\smallavg{(R_{1,2}-m(\sigma^1)m(\sigma^2))^2} \to 0$ in probability. This implies that 
\[
\smallavg{R_{1,2}^l} - \smallavg{m(\sigma^1)^l m(\sigma^2)^l} \to 0
\]
in probability. But $\smallavg{m(\sigma^1)^l m(\sigma^2)^l} = \smallavg{m^l}^2$.  Thus, \eqref{rlimit} follows from \eqref{mlim}. Now, proceeding just as in the derivation of \eqref{r12l}, we get
\begin{align*}
&\frac{1}{|B_n|^l}\sum_{i_1,\ldots,i_l\in B_n} |\smallavg{\sigma_{i_1}\cdots \sigma_{i_l}} - q^{\frac{l}{2}}\tanh X_n|  \\
&\le \biggl[\frac{1}{|B_n|^l}\sum_{i_1,\ldots,i_l\in B_n} (\smallavg{\sigma_{i_1}\cdots \sigma_{i_l}} - q^{\frac{l}{2}}\tanh X_n)^2\biggr]^{\frac{1}{2}}\\
&= \biggl[\frac{1}{|B_n|^l}\sum_{i_1,\ldots,i_l\in B_n} (\smallavg{\sigma_{i_1}\cdots \sigma_{i_l}}^2 - 2q^{\frac{l}{2}}\smallavg{\sigma_{i_1}\cdots \sigma_{i_l}} \tanh X_n + q^l\tanh^2 X_n)\biggr]^{\frac{1}{2}} \\
&= [\smallavg{R_{1,2}^l} - 2q^{\frac{l}{2}}\smallavg{m^l}\tanh X_n + q^l\tanh^2 X_n]^{\frac{1}{2}}. 
\end{align*}
By \eqref{mlim} and \eqref{rlimit}, the last expression tends to zero in probability as $n\to\infty$. This proves the claim \eqref{oddl}.

Now take any $n$, and let $\tau_{n,1},\tau_{n,2},\ldots$ be an infinite exchangeable sequence of random variables with the following random law. Given $X_n$, let $Z_n$ be a random variable that takes value $\sqrt{q}$ with probability $\frac{1}{2}(1+\tanh X_n)$ and $-\sqrt{q}$ with probability $\frac{1}{2}(1-\tanh X_n)$. Having generated $Z_n$, let $\tau_{n,1},\tau_{n,2},\ldots$ be i.i.d.~random variables taking value $1$ with probability $\frac{1}{2}(1+Z_n)$ and $-1$ with probability $\frac{1}{2}(1-Z_n)$. Then note that $\E(\tau_{n,i}|Z_n, X_n) = Z_n$, and therefore, for any positive integer~$l$, 
\[
\E(\tau_{n,1}\cdots \tau_{n,l}|Z_n, X_n) = Z_n^l.
\]
This give us
\[
\E(\tau_{n,1}\cdots \tau_{n,l}|X_n) = \E(Z_n^l|X_n) = 
\begin{cases}
q^{\frac{l}{2}}\tanh X_n &\text{ if $l$ is odd,}\\
q^{\frac{l}{2}} &\text{ if $l$ is even.} 
\end{cases}
\]
Comparing this with \eqref{evenl} and \eqref{oddl}, it is now easy to show that for any $l$, the L\'evy--Prokhorov distance between the (random) laws of $(\sigma_{\pi_n(1)},\ldots, \sigma_{\pi_n(l)})$ and $(\tau_{n,1},\ldots, \tau_{n,l})$ converges to zero in probability as $n\to\infty$. But, the random law of $(\tau_{n,1},\ldots, \tau_{n,l})$  converges in distribution to the random law of $(\tau_1,\ldots,\tau_l)$, where $\tau_1,\tau_2,\ldots$ are defined just like the $\tau_{n,i}$'s, but with $X_n$ replaced by $X = \sqrt{q}\beta h W$, where $W$ is a standard Gaussian random variable. This suffices to complete the proof.

\bibliographystyle{abbrvnat}

\bibliography{myrefs}

\end{document}